\newtheorem{theorem}{Theorem}[section]
\newtheorem{lemma}[theorem]{Lemma}
\newtheorem{conjecture}[theorem]{Conjecture}
\newtheorem{definition}[theorem]{Definition}
\newtheorem{proposition}[theorem]{Proposition}
\newtheorem{corollary}[theorem]{Corollary}
\newtheorem{question}[theorem]{Question}
\theoremstyle{definition}
\newtheorem{example}[theorem]{Example}
\newtheorem{remark}[theorem]{Remark}
\DeclareMathOperator{\MCG}{MCG}
\DeclareMathOperator{\LMCG}{LMCG}
\DeclareMathOperator{\SMCG}{SMCG}
\DeclareMathOperator{\PMCG}{PMCG}
\DeclareMathOperator{\LPMCG}{LPMCG}
\DeclareMathOperator{\len}{len}
\begin{document}

\title{Equivariant unknotting numbers of strongly invertible knots} 

\author{Keegan Boyle}  
\address{Department of Mathematical Sciences, New Mexico State University, USA} 
\email{kboyle@nmsu.edu}

\author{Wenzhao Chen}
\address{Institute of Mathematical Sciences, ShanghaiTech University, China}
\email{chenwzh@shanghaitech.edu.cn}

\setcounter{section}{0}

\begin{abstract}
We study symmetric crossing change operations for strongly invertible knots. Our main theorem is that the most natural notion of equivariant unknotting number is not additive under connected sum, in contrast with the longstanding conjecture that unknotting number is additive. 
\end{abstract}
\maketitle
\tableofcontents
\section{Introduction}
The \emph{unknotting number} $u(K)$ of a knot $K$ is the minimum number of transverse self-intersections in a regular homotopy to the unknot. First defined in \cite{MR1545700}, the unknotting number has a long history but remains mysterious. For example, according to KnotInfo \cite{knotinfo}, the knots with unknown unknotting number and 10 or fewer crossings are 
\[
10_{11},10_{47},10_{51},10_{54},10_{61},10_{76},10_{77},10_{79}, \mbox{and } 10_{100}.
\]
Another important open question about the unknotting number concerns its additivity under connected sum of knots.

\begin{conjecture}[Additivity of unknotting number \cite{MR1545700}] \label{conj:additivity}
Let $K$ and $K'$ be knots in $S^3$. Then $u(K\#K') = u(K) + u(K')$.
\end{conjecture}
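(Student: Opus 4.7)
The inequality $u(K\#K')\le u(K)+u(K')$ is immediate from concatenating minimal unknotting sequences for the two summands, so the content of \Cref{conj:additivity} is the reverse bound. My plan is to attack this by searching for a lower bound $\ell$ for $u$ that is both additive under connected sum and tight on a large enough class of knots. Classical candidates such as $|\sigma(K)|/2$, the Levine--Tristram signatures, Nakanishi's index from the Alexander module, and the concordance invariants $\tau$ and $s$ are all additive, but each typically undercounts $u(K)$ on at least one summand in any nontrivial example, so they cannot close the gap by themselves.

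The most promising setup, as I see it, is to translate the problem to the double branched cover. Under $K\mapsto\Sigma_2(K)$, connected sum of knots becomes connected sum of $3$-manifolds, and a single crossing change on $K$ is realized by a rational surgery on an unknot in $\Sigma_2(K)$. Thus an unknotting sequence of length $n$ for $K\#K'$ yields a (definite) cobordism with $n$ two-handles from $\Sigma_2(K)\#\Sigma_2(K')$ to $S^3$. The plan is to combine the additivity of Heegaard Floer correction terms $d$ under connected sum (and, if needed, their involutive refinements $\underline{d},\bar{d}$) with the Montesinos-trick obstructions already used to bound ordinary unknotting number, in an effort to produce an additive lower bound that matches $u$ on the class of knots whose branched double covers are, say, L-spaces or almost-rational plumbed manifolds. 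On that class the conjecture would then follow.

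The main obstacle is exactly what has kept \Cref{conj:additivity} open since \cite{MR1545700}: every additive invariant currently on the market can fail to be sharp, while the constructions that \emph{are} sharp (the algebraic unknotting number, bounds read directly off a minimal unknotting sequence) are not known to be additive. Worse, the main result of the present paper shows that the analogous additivity statement \emph{fails} in the equivariant setting, which is a strong warning that classical additivity is not a soft consequence of any general principle but a genuinely smooth-topological phenomenon, if it holds at all. A realistic first milestone would be to verify the conjecture for pairs $(K,K')$ with both branched double covers L-spaces, where the Heegaard Floer input is strongest; extending past that class, and in particular past the regime where every unknotting crossing change is detected by the $d$-invariants, is where I expect the real difficulty to lie.
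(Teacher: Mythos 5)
This statement is a longstanding open conjecture, not a theorem of the paper: the paper states it only to motivate the equivariant analogue, and its main result (Theorem \ref{thm:nonadditive}) is that the equivariant version \emph{fails}. There is no proof in the paper for you to match, and your proposal does not supply one. The only part of your write-up that is actually established is the easy direction $u(K\#K')\le u(K)+u(K')$ by concatenating unknotting sequences; everything else is a survey of candidate lower bounds together with an accurate admission that none of them is known to be sharp. That admission \emph{is} the gap: a proof requires exhibiting an additive lower bound that equals $u$ (at least on the class of knots under consideration), and you never produce one, not even for your proposed first milestone of L-space branched double covers.

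Two more concrete issues with the sketch. First, the Montesinos trick realizes a crossing change on $K$ as a half-integer surgery on the lift of the crossing arc, which is a knot in $\Sigma_2(K)$ but generally \emph{not} an unknot, and the resulting $2$-handle cobordism from $\Sigma_2(K)\#\Sigma_2(K')$ to $S^3$ is definite only if all crossing changes have the same sign, which a minimal unknotting sequence need not satisfy; so the $d$-invariant machinery does not apply in the clean form you describe. Second, even where it does apply, the Ozsv\'ath--Szab\'o-type obstructions bound $u$ from below by quantities (essentially variants of signature and correction terms) that are already known to undercount $u$ on explicit examples, so restricting to L-spaces does not obviously close the gap. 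Your instinct that Theorem \ref{thm:nonadditive} is a warning sign is well taken, but the honest conclusion is that your proposal is a research program, not a proof, and the conjecture remains open.
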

In light of the evident difficulty of these questions, a natural idea is to consider the unknotting number in the presence of additional structure. To this end, we study equivariant unknotting numbers for strongly invertible knots. 

A \emph{strongly invertible knot} is a knot $K \subset S^3$ along with a smooth symmetry which preserves the orientation on $S^3$ but reverses the orientation on $K$. As a consequence of geometrization, any such symmetry is conjugate in the diffeomorphism group of pairs $(S^3, K)$ to a $180^{\circ}$-rotation around an unknot which intersects $K$ in two points (see for example \cite{boyle2023classificationsymmetriesknots}). For an example, see any of the diagrams appearing in the first or third row of Figure \ref{fig:selfintersectiontypes}. 

Naturally, we would like to consider an equivariant version of Conjecture \ref{conj:additivity}. To make this precise, we define the \textbf{total equivariant unknotting number} (denoted by $\widetilde{u}(K)$; see Definition \ref{def:totaleu}) which is the minimum number of transverse self-intersections in a regular and equivariant homotopy to the unknot. In this setting, we can resolve the equivariant version of Conjecture \ref{conj:additivity}.

\begin{restatable}{theorem}{nonadditive}
\label{thm:nonadditive}
There are strongly invertible knots $K_1$ and $K_2$ and an equivariant connected sum $K_1\#K_2$ such that $\widetilde{u}(K_1\#K_2) > \widetilde{u}(K_1) + \widetilde{u}(K_2)$. In particular, the total equivariant unknotting number is not additive or even sub-additive.
\end{restatable}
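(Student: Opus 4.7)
The plan is to exhibit a pair of strongly invertible knots $K_1, K_2$ together with a specific equivariant connected sum, compute the right-hand side $\widetilde{u}(K_1) + \widetilde{u}(K_2)$ by drawing explicit equivariant unknotting sequences, and establish a strictly larger lower bound on $\widetilde{u}(K_1 \# K_2)$ using an equivariant invariant. I would start by scanning small strongly invertible knots---two-bridge knots or low-crossing pretzel knots with their natural strong inversions---and select a pair whose total equivariant unknotting numbers can be determined exactly, ideally with $\widetilde{u}(K_1) = \widetilde{u}(K_2) = 1$ realized by a single on-axis crossing change in each factor, so that the right-hand side is as small as possible and sharpness on each summand can be verified directly.

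For the lower bound on $\widetilde{u}(K_1 \# K_2)$, I would invoke (or construct) an invariant $\Phi$ of strongly invertible knots satisfying three properties: additivity under equivariant connected sum, vanishing on the unknot, and a two-part crossing-change estimate in which each on-axis crossing change changes $\Phi$ by at most $1$ and each equivariant off-axis crossing pair changes it by at most $2$. Any such $\Phi$ automatically satisfies $\Phi \le \widetilde{u}$, and additivity then yields $\widetilde{u}(K_1 \# K_2) \ge \Phi(K_1) + \Phi(K_2)$. To force strict super-additivity it suffices to choose $K_1, K_2$ so that $\Phi(K_1) + \Phi(K_2) > \widetilde{u}(K_1) + \widetilde{u}(K_2)$, which happens precisely when $\Phi$ is not sharp on either summand individually. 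Reasonable candidates include equivariant signatures associated to the involution, Sakuma-style $\eta$-polynomial invariants, $d$-invariants of the branched double cover with its induced $\mathbb{Z}/2$-action, and equivariant refinements of Heegaard Floer concordance invariants such as $\tau$ or $\upsilon$.

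The central obstacle is producing such a $\Phi$: one needs an invariant that is simultaneously additive under equivariant connected sum, genuinely sensitive to the equivariant structure, and strong enough to strictly exceed $\widetilde{u}(K_1) + \widetilde{u}(K_2)$ on the sum. Classical concordance invariants that are additive under ordinary connected sum are typically controlled by $2u(K)$ rather than by $\widetilde{u}(K)$, and are often already sharp on strongly invertible knots, so they give no purchase on the equivariant unknotting number. Equivariant Heegaard Floer invariants, obtained from the action of the involution on the Floer complex and producing separate bounds for on-axis and off-axis crossing changes, appear to be the most promising source. Once $\Phi$ is fixed and the examples are chosen, the remainder of the argument is direct verification: draw the equivariant unknotting homotopies realizing $\widetilde{u}(K_i)$, compute $\Phi(K_i)$ on each factor, and check that $\Phi(K_1) + \Phi(K_2)$ strictly exceeds $\widetilde{u}(K_1) + \widetilde{u}(K_2)$. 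Strict super-additivity then immediately precludes both additivity and sub-additivity.
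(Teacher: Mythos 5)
There is a fatal logical obstruction at the heart of your strategy. You propose to find an invariant $\Phi$ that (i) is additive under equivariant connected sum, (ii) vanishes on the unknot, and (iii) changes by at most the cost of each equivariant crossing change, so that $\Phi(K) \le \widetilde{u}(K)$ for every strongly invertible knot $K$. But property (iii) applies to each summand separately, so these hypotheses together force
\[
\Phi(K_1) + \Phi(K_2) \;\le\; \widetilde{u}(K_1) + \widetilde{u}(K_2).
\]
Hence your stated sufficient condition ``$\Phi(K_1)+\Phi(K_2) > \widetilde{u}(K_1)+\widetilde{u}(K_2)$'' is unsatisfiable, and your gloss of it is backwards: $\Phi$ failing to be sharp on the summands makes the left-hand side strictly \emph{smaller}, not larger. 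No lower bound for $\widetilde{u}$ that is additive under equivariant connected sum --- whether built from equivariant signatures, $d$-invariants, or Floer-theoretic refinements --- can ever certify super-additivity. This is the essential difficulty of the theorem, and it is why the paper does not argue this way.

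The paper's actual proof takes $K_1 = K_2$ to be the $3$-twist knot, each unknottable by a single type C (``along the axis'') move so that $\widetilde{u}(K_1)+\widetilde{u}(K_2)=2$, and then rules out every possible length-two equivariant unknotting sequence for the connected sum by a case analysis on the types of the moves. The cases involving a type A move or two type B moves are excluded by non-equivariant invariants of the quotient knots (the signature of $T(2,7)\#T(2,7)$ and a $4$-move obstruction for $2$-bridge knots), which is close in spirit to what you suggest. But the decisive cases --- sequences using type C moves, which are the only reason the summands are cheap to unknot --- are handled by explicitly enumerating all knots obtainable from $K_1\#K_2$ by one type C move (two one-parameter families $J_m^{\pm}$) and showing none of them is one further move from the unknot, using the classification of type-C-unknottable knots as certain $(1,2)$-knots together with Goeritz-matrix signature computations. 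The mechanism behind non-additivity is precisely that type C moves become unavailable or ineffective on connected sums; this is a structural, non-additive phenomenon that your proposed framework cannot detect, and your proposal is missing this idea entirely.
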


Our approach to Theorem \ref{thm:nonadditive} is to consider the natural classification of equivariant transverse self-intersections into three types, which we call type A, type B, and type C. These self-intersections correspond to three types of equivariant crossing change to which we apply the same labels; see Figure \ref{fig:selfintersectiontypes}. As a stepping stone to Theorem \ref{thm:nonadditive}, we define, for $X \in \{A,B,C\}$, the \textbf{type $X$ unknotting number} $\widetilde{u}_X(K)$ of a strongly invertible knot $K$ as the minimum number of type $X$ self-intersections in a regular and equivariant homotopy to the unknot, where all self-intersections are required to be type $X$. For example in Figure \ref{fig:selfintersectiontypes}, the second column demonstrates that $\widetilde{u}_B(4_1) = 1$, the third column demonstrates that $\widetilde{u}_C(4_1) = 1$, and the first column demonstrates that $|\widetilde{u}_A(4_1) - \widetilde{u}_A(3_1)| \leq 1.$ In contrast with the total equivariant unknotting number, it may a priori be the case that a knot cannot be unknotted with type $X$ moves, in which case we say that $\widetilde{u}_X(K) = \infty$. 

\begin{figure}
\begin{overpic}[width=300pt, grid=false]{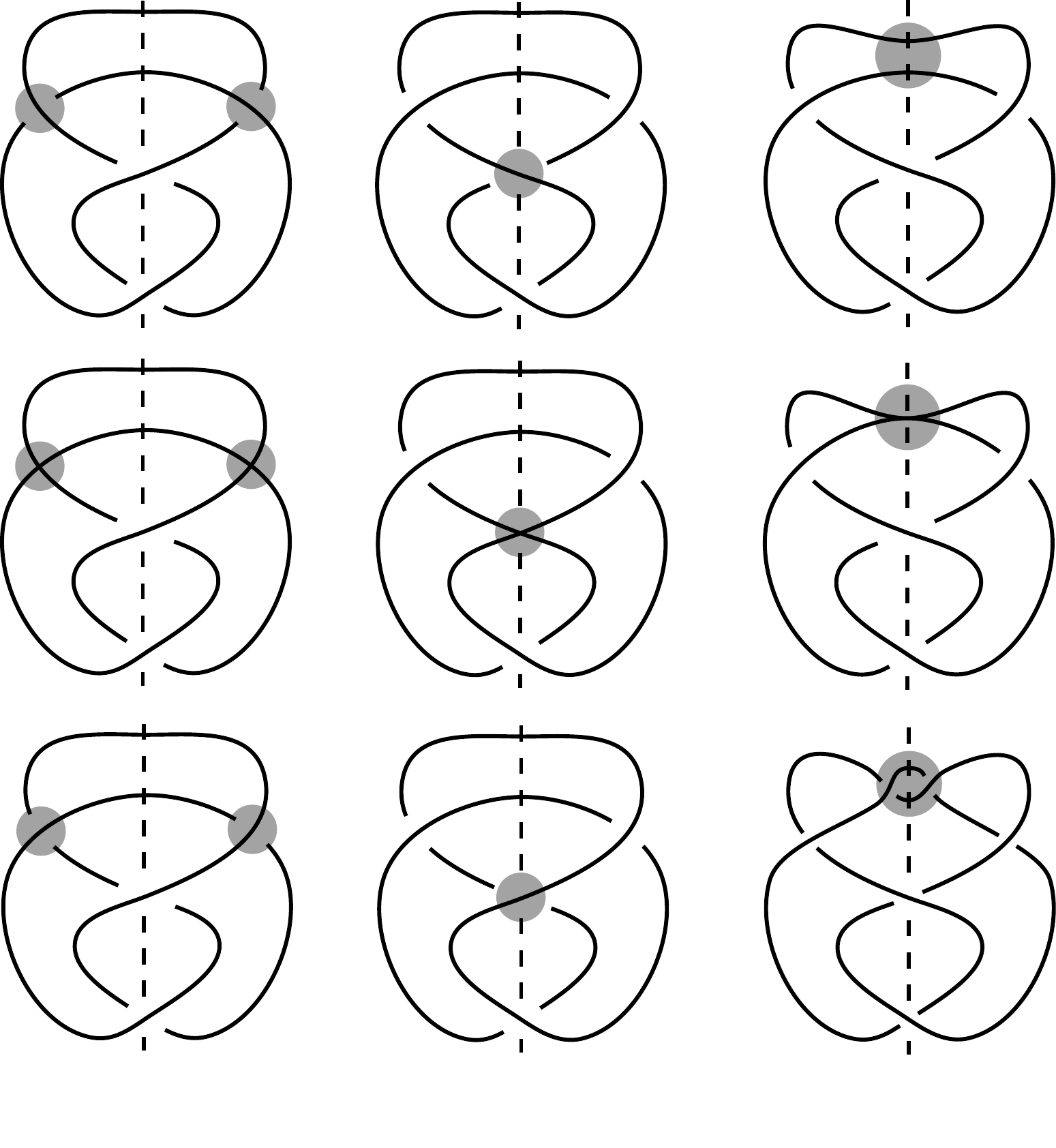}
    \put (7.5, 3) {Type A}
    \put (40, 3) {Type B}
    \put (74, 3) {Type C}
  \end{overpic}
\caption{Examples of the three types of equivariant transverse self-intersections, realized as a movie starting on the figure-eight knot. The first column is Type A (`off the axis'), the middle column is Type B (`through the axis'), and the right column is Type C (`along the axis').}
\label{fig:selfintersectiontypes}
\end{figure}

The majority of this paper is concerned with studying these restricted notions of equivariant unknotting number, which will culminate in Theorem \ref{thm:nonadditive}. In fact, we will see that Theorem \ref{thm:nonadditive} relies on the non-additivity of the type C unknotting number.

\begin{restatable}{theorem}{typeCadditive}
\label{thm:typeCadditive}
Let $K$ by a strongly invertible knot with three non-trivial summands. Then $\widetilde{u}_C(K) = \infty$. In particular, the type C unknotting number is not additive under connected sum.
\end{restatable}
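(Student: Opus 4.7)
The plan is to pass to the quotient $(S^3, K \cup A)/\tau$, where the pair becomes a spatial theta graph $\Theta = \tilde K \cup \tilde A \subset S^3$ with $\tilde A$ an unknot and $\tilde K$ an arc having endpoints on $\tilde A$. In this setting, type C crossing changes of $K$ correspond to crossing changes between $\tilde K$ and $\tilde A$ in a diagram of $\Theta$; equivalently, to interior points of $\tilde K$ passing transversely through $\tilde A$. Equivariant isotopies of $K$ disjoint from the interior of $A$ correspond to isotopies of $\tilde K$ in $S^3 \setminus \tilde A$, with the two endpoints of $\tilde K$ allowed to slide along $\tilde A$. The theorem thus reduces to a statement about how the theta graph $\Theta$ cannot be reduced to the trivial one by the restricted class of moves consisting of isotopies in $S^3 \setminus \tilde A$ together with finitely many transverse pushes of $\tilde K$ across $\tilde A$.

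Next, I would invoke the equivariant prime decomposition of $K$. For $K = K_1 \# K_2 \# K_3$ with three non-trivial summands, the equivariant summand $2$-spheres in $S^3$ descend to a pair of $2$-spheres $\tilde S_1, \tilde S_2$ in the quotient, each meeting $\tilde A$ in two points and $\tilde K$ in one point. Together they decompose $\Theta$ into three tangles $T_1, T_2, T_3$. The structural feature at the heart of the proof is that the two endpoints of $\tilde K$ lie in $T_1$ and $T_3$, so the middle tangle $T_2$, which encodes $K_2$, contains no endpoint of $\tilde K$ on $\tilde A$. Consequently, the local ``axis structure'' visible to $T_2$ is a collection of arcs of $\tilde A$ rather than a full axis circle as in a standalone picture, and this difference is what the obstruction will ultimately exploit.

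The key step is to exhibit an invariant of $\Theta$ which is preserved under crossing changes between $\tilde K$ and $\tilde A$ and which detects the non-triviality of the middle tangle $T_2$. I would pursue this via a JSJ-style analysis in the complement of $\Theta$: because $K_2$ is non-trivial, $T_2$ contributes an essential incompressible piece to that complement, and because each type C move is a local modification of $\tilde K$ against $\tilde A$, one can isotope the pair $(\tilde S_1, \tilde S_2)$ off the site of any single type C move, so that $T_2$ persists through the move. Iterating, the essential piece contributed by $K_2$ survives any finite sequence of type C moves. Since the unknot's quotient theta graph is prime and has no such middle tangle, this invariant does not vanish for $K$ but is trivial for the unknot, so no finite sequence of type C moves can take $K$ to the unknot, yielding $\widetilde u_C(K) = \infty$. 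The non-additivity assertion follows immediately, since each $\widetilde u_C(K_i)$ can be taken finite for concrete non-trivial strongly invertible summands.

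The main obstacle will be implementing the preservation argument rigorously: one must show that, after any sequence of type C moves, the summand spheres $\tilde S_1, \tilde S_2$ can be maintained (up to isotopy), so that the middle tangle $T_2$ remains an invariant non-trivial tangle throughout. This is where the absence of endpoints of $\tilde K$ inside $T_2$ plays a decisive role, because it prevents the type C moves from ever amounting to a ``standalone'' unknotting move for $K_2$, distinguishing the three-summand case cleanly from the one- and two-summand cases in which the endpoint structure allows the summand decomposition to be broken by a finite type C sequence.
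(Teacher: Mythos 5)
There is a genuine gap, and it sits exactly at what you call the ``key step.'' You never actually construct the invariant; you only announce a plan to find one via a JSJ-style analysis, resting on the claim that ``one can isotope the pair $(\tilde S_1,\tilde S_2)$ off the site of any single type C move, so that $T_2$ persists through the move.'' That claim is unjustified, and it is precisely the kind of statement that makes Conjecture \ref{conj:additivity} hard: a crossing-change ball can meet a summand sphere essentially, and after the modification there is no reason the old spheres (or any isotopic copies) are still summand spheres for the new knot. An argument of the form ``the essential piece contributed by $K_2$ survives any finite sequence of crossing-change-type moves'' would, if it worked, prove strong additivity statements that are open; nothing in your sketch isolates what is special about type C moves that would make the persistence go through. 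There is also an error earlier in the setup: in the quotient theta graph, an interior point of $\tilde K$ passing transversely through $\tilde A$ is the image of a symmetric pair of points of $K$ meeting the axis, i.e.\ a \emph{type B} self-intersection, not type C. A type C move is the collision of the two fixed points of $K$, i.e.\ the two \emph{endpoints} of $\tilde K$ passing through each other along $\tilde A$; the paper shows (proof of Theorem \ref{thm:typeCbounds}) that this descends to a non-orientable band move on one of $\mathfrak{q}_1(K)$, $\mathfrak{q}_2(K)$. So the class of moves you are trying to obstruct is not the class induced by type C moves.

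For comparison, the paper's route avoids any persistence-of-summand-spheres argument entirely. It first proves the classification in Theorem \ref{thm:1_2typeC}: $\widetilde{u}_C(K)<\infty$ if and only if $K$ is a $(1,2)$-knot with the axis as the core of one handlebody (this is where the real work lies, via the symmetric mapping class group of the solid torus with four marked boundary points). Corollary \ref{cor:tunnel} then gives that any type-C-unknottable knot has tunnel number at most $2$, while by Scharlemann--Schultens the tunnel number of a connected sum of three non-trivial knots is at least $3$. The contradiction is immediate and requires no control over how summand spheres interact with individual moves. If you want to salvage your approach, you would need to replace the persistence heuristic with an actual invariant of the quotient tangle that is provably unchanged by non-orientable band moves at the endpoints of $\tilde K$; as written, the proposal does not contain a proof.
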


For type A and type B crossing changes, we do not know whether the corresponding equivariant unknotting numbers are additive.

\begin{conjecture}
Let $K\#K'$ be an equivariant connected sum of two strongly invertible knots $K$ and $K'$. Then $\widetilde{u}_A(K\#K') = \widetilde{u}_A(K) + \widetilde{u}_A(K')$ and $\widetilde{u}_B(K\#K') = \widetilde{u}_B(K) + \widetilde{u}_B(K')$.
\end{conjecture}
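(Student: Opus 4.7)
The plan is to establish the two inequalities in $\widetilde{u}_X(K\#K') = \widetilde{u}_X(K) + \widetilde{u}_X(K')$ separately, for each $X \in \{A,B\}$. Sub-additivity, $\widetilde{u}_X(K\#K') \leq \widetilde{u}_X(K) + \widetilde{u}_X(K')$, should follow by concatenating equivariant unknotting homotopies of $K$ and $K'$ supported in disjoint equivariant $3$-balls whose axis arcs are identified through the connected sum sphere; equivariance is preserved throughout, and the resulting type $X$ self-intersection count is exactly $\widetilde{u}_X(K) + \widetilde{u}_X(K')$.

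For super-additivity, my main strategy would be an essential-sphere argument modeled on the proof that bridge number is sub-additive under connected sum. An equivariant connected sum $K \# K'$ contains a distinguished invariant $2$-sphere $\Sigma$ meeting $K\#K'$ in two points and the axis in two points. Given an equivariant regular homotopy $H$ from $K\#K'$ to the unknot with $n$ type $X$ self-intersections, I would attempt to equivariantly isotope $H$ so that for every time $t$ the image $H_t$ meets $\Sigma$ transversely in exactly two points and every self-intersection lies strictly on one side of $\Sigma$. Cutting along $\Sigma$ would then produce equivariant unknotting homotopies of $K$ and $K'$ whose self-intersection counts sum to $n$. The normalization would proceed by induction, eliminating innermost intersection curves of $\Sigma$ with the singular mapping cylinder of $H$ via equivariant compression disks supplied by the essentiality of $\Sigma$ in the complement of $K\#K'$.

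The main obstacle is carrying out this normalization while preserving the type A/B labels of the self-intersections. For type B moves, which occur on the axis, one must ensure that equivariant compressions do not push a singular point off the axis, and conversely for type A one must prevent a singular point from migrating onto the axis. A fallback strategy is to construct an equivariant concordance invariant --- such as an equivariant signature, a branched-cover $d$-invariant, or an equivariant version of $\tau$ --- that is additive under equivariant connected sum and realizes the lower bound $\widetilde{u}_X$. However, any such invariant likely needs to exploit the equivariant structure in an essential way, since otherwise the obstruction to proving Conjecture \ref{conj:additivity} in the non-equivariant setting would immediately reappear here.
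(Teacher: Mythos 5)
This statement is labelled a conjecture in the paper, and the authors explicitly state that they do not know whether $\widetilde{u}_A$ and $\widetilde{u}_B$ are additive; there is no proof in the paper to compare against, and your proposal does not close the gap either. The sub-additivity half of your argument is fine: the two equivariant unknotting sequences can be performed in disjoint invariant balls, so $\widetilde{u}_X(K\#K') \leq \widetilde{u}_X(K) + \widetilde{u}_X(K')$ for $X \in \{A,B\}$. But this is the easy direction, and it is the super-additivity direction that carries all the content.

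The genuine gap is the normalization step in your essential-sphere argument, and it is not a technical loose end but the same obstruction that has kept Conjecture \ref{conj:additivity} open since 1937. The trace of the homotopy $H$ is a \emph{singular} annulus in $S^3 \times I$, so its intersection with $\Sigma \times I$ is not a disjoint union of circles in an embedded surface, and the innermost-disk machinery you invoke does not apply: compressing an innermost curve requires a disk in the complement of the moving knot at a fixed time, and the essentiality of $\Sigma$ in the complement of $K\#K'$ at time $0$ gives no control at later times, after crossing changes have been performed. More fundamentally, a single self-intersection of $H$ can involve two strands that, in every isotoped position of $\Sigma$, lie on opposite sides of the sphere; nothing forces the singular points to localize to one summand. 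If this normalization could be carried out (even forgetting the symmetry and the A/B labels), it would prove Conjecture \ref{conj:additivity}, so any correct proof of the equivariant statement must exploit the symmetry in an essential way --- which your argument does not do, since the equivariance only appears as a constraint to be preserved rather than as a tool. Your fallback via an additive equivariant concordance-type invariant faces the same issue you already identify, plus the additional problem that such invariants bound $\widetilde{g}_4$ rather than $\widetilde{u}_X$, and the gap between the two can be large (the paper's Corollary \ref{cor:typeAquot} shows $\widetilde{u}_A - u$ is unbounded even for $u = 1$).
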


In building towards Theorem \ref{thm:nonadditive}, we also provide some answers to elementary questions about $\widetilde{u}_A, \widetilde{u}_B,$ and $\widetilde{u}_C$, which we state in Sections \ref{subsec:uo} and \ref{subsec:lb}.

\subsection{Unknotting operations} \label{subsec:uo} Which types of equivariant crossing changes are unknotting operations? In other words, is $\widetilde{u}_X(K) < \infty$? For the type A unknotting number, we have the following. 

\begin{restatable}{theorem}{typeAunknots}
\label{thm:typeAunknots}
For any strongly invertible knot $K$, we have $\widetilde{u}_A(K) < \infty$.
\end{restatable}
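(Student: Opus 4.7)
The plan is to reduce the problem to unknotting a properly embedded arc in a 3-ball by crossing changes. Set up coordinates so that the symmetry axis is the $x$-axis and the involution is $\iota(x,y,z) = (x,-y,-z)$; take the closed half-space $F := \{z \geq 0\}$ as a fundamental domain for $\iota$, and consider the \emph{half-knot} $k := K \cap F$, a properly embedded arc in $F$ with endpoints on the axis $\tau \subset \partial F$, so that $K = k \cup \iota(k)$. The key observation is that any crossing change on $k$ supported in a small ball $B$ contained in the open half $\{z > 0\}$ has its image $\iota(B) \subset \{z < 0\}$ disjoint from $B$ and from $\tau$, so the simultaneous pair of crossing changes on $k$ and $\iota(k)$ is precisely a type A equivariant move on $K$.

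Next, project $k$ onto the $xz$-plane, using the $y$-coordinate as ``height'', and after a small equivariant isotopy assume the projection is generic with every crossing occurring at a point with $z > 0$. Applying the standard over/under algorithm for unknotting an arc, orient $k$ from one axis-fixed endpoint to the other and, at each crossing encountered, declare the first-visited strand to be the over-strand. This requires finitely many crossing changes on $k$, each of which lifts by the preceding paragraph to a type A equivariant move on $K$.

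After these crossing changes $k$ has an ascending diagram in the $xz$-plane, and the usual argument shows that it is isotopic rel endpoints, through an isotopy supported in $F$, to a planar arc $k_0$ lying in the half-plane $\{y = 0\} \cap F$. Since $\iota$ preserves the $xz$-plane and $\iota(k_0)$ lies in $\{y = 0,\, z \leq 0\}$, the knot $K = k_0 \cup \iota(k_0)$ is a simple closed curve in the $xz$-plane, and hence an unknot. This bounds $\widetilde{u}_A(K)$ above by the number of crossings in the chosen projection of $k$; in particular $\widetilde{u}_A(K) < \infty$. The one step requiring real care is verifying that the straightening isotopy can be carried out entirely within the fundamental domain $F$ rather than in $S^3$, which is a routine adaptation of the standard unknotting argument but is the only place where the half-space constraint genuinely enters.
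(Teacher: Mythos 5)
Your argument breaks down at the very first step: the half-space position you assume does not exist for a general strongly invertible knot. You require an equivariant isotopy after which $K$ meets the invariant plane $\{z=0\}$ in exactly the two fixed points, so that $k = K \cap F$ is a single properly embedded arc and $\iota(k)$ lies in the opposite half-space. But then the invariant sphere $S = \{z=0\} \cup \{\infty\}$ meets $K$ transversally in two points and therefore exhibits $K$ as a connected sum $K = K_1 \# K_2$, where $K_1$ is the closure of $k$ by an arc in $S$ and $K_2$ is the closure of $\iota(k)$. Since $\iota$ is an orientation-preserving diffeomorphism of $S^3$ carrying one ball--arc pair to the other, $K_2$ has the same knot type as $K_1$ (up to reversal), so $K \cong K_1 \# rK_1$. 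By uniqueness of prime factorization, a nontrivial \emph{prime} strongly invertible knot --- the trefoil or the figure-eight, say --- admits no such decomposition unless $K_1$ is trivial, which would force $K$ to be trivial. Hence for essentially every knot of interest, $K$ must meet $\{z=0\}$ in additional $\iota$-pairs of points, $K \cap F$ is a tangle of several arcs rather than one, and the rest of your argument collapses: making a multi-strand tangle descending does not unknot $K$ (a trivial $2$-strand tangle glued to its mirror across $S$ can already be any $2$-bridge knot).

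The step you flagged as delicate (performing the straightening isotopy inside $F$) is not the problem; if the single-arc position existed, the closure of a descending arc by the axis segment would be a descending knot diagram, so $k$ would indeed be trivial in the ball $F$ and $K$ would be a sum of two unknots. The genuine difficulty sits upstream, and it is exactly what the paper's proof confronts: there one takes an \emph{intravergent} diagram, with the axis orthogonal to the diagram plane, and cuts $K$ at its two fixed points into arcs $\gamma_1$ and $\gamma_2$ that are interleaved in the same region of space rather than separated by an invariant sphere. The content of the proof is then the choice of a symmetric over/under rule at the crossings --- including the crossings \emph{between} $\gamma_1$ and $\gamma_2$, which your decomposition attempts, impossibly, to eliminate --- realized by symmetric pairs of off-axis crossing changes, i.e., type A moves.
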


On the other hand, we do not know if every strongly invertible knot can be unknotted with type B moves.

\begin{restatable}{question}{typeBunknots}
\label{q:typeBunknots}
For any strongly invertible knot $K$, is it true that $\widetilde{u}_B(K) < \infty$?
\end{restatable}

\begin{remark} \label{rmk:3131}
Consider the connected sum of a trefoil with its reverse $K = 3_1 \# r3_1$ with the symmetry that exchanges the two summands. Any minimal crossing number diagram for $K$ does not have any on-axis crossings. Thus we find it surprising that $\widetilde{u}_B(K) \neq \infty$. Indeed, we can see in Figure \ref{fig:3131} that $\widetilde{u}_B(K) \leq 4$.
\end{remark}

\begin{figure}
\begin{overpic}[width=400pt, grid=false]{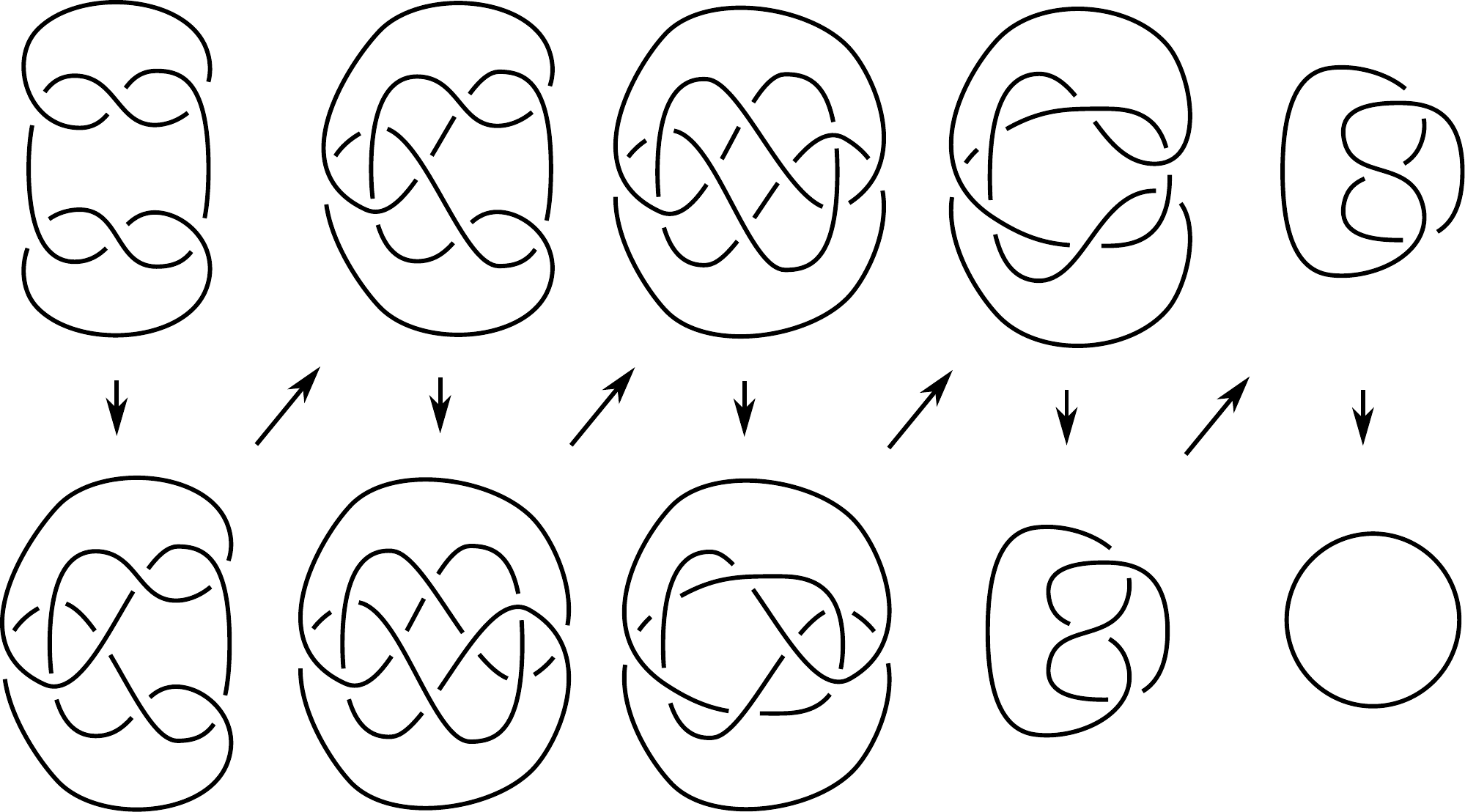}
\put (5, 27) {\large$i$}
\put (27, 27) {\large$i$}
\put (48, 27) {\large$i$}
\put (70, 26) {\large$i$}
\put (90, 26) {\large$i$}

\put (14, 27) {\large$B_1$}
\put (36, 27.5) {\large$B_2$}
\put (60, 27) {\large$i$}
\put (78, 26.5) {\large$B_1$}
\end{overpic}
\caption{An unknotting sequence of $3_1 \# r3_1$ consisting of equivariant isotopies indicated by arrows labelled with an $i$, and type B moves indicated by arrows labelled with a B$_x$, where $x$ is the number of type B moves applied. For compactness, the axis of symmetry is horizontal in each diagram. A total of 4 type B moves are used so that $\widetilde{u}_B(3_1\#r3_1) \leq 4$.}
\label{fig:3131}
\end{figure}

Another interesting observation about Question \ref{q:typeBunknots} is that a positive answer would imply a positive answer to Nakanishi's 4-move conjecture \cite[Conjecture B]{MR0881755} (this problem also appears on the Kirby problem list \cite[1.59(3)(a)]{kirbylist}); see Corollary \ref{cor:typeB4move}.

Finally, we give a complete classification of strongly invertible knots which can be unknotted with type C moves. In the following theorem, a $(1,2)$-knot refers to a genus one 2-bridge knot. That is a knot which can be decomposed into a union of 4 arcs by the standard genus 1 Heegaard splitting of $S^3$, where each handlebody contains a pair of boundary-parallel arcs.

\begin{restatable}{theorem}{typeC}
\label{thm:1_2typeC}
A strongly invertible knot $K$ has $\widetilde{u}_C(K) < \infty$ if and only if $K$ is a $(1,2)$-knot such that the axis of symmetry is the core of one of the handlebodies in the $(1,2)$ decomposition.
\end{restatable}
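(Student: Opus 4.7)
The plan is to prove both directions by analyzing how type C moves interact with a genus-one Heegaard splitting $S^3 = V_1 \cup V_2$ in which the axis $A$ of the strong inversion is the core of $V_1$. As a preliminary observation, $K \cap A$ coincides with the fixed-point set of $\tau$ on $K$, so each arc of $K \cap V_1$ is $\tau$-invariant and meets $A$ in exactly one point, while the arcs of $K \cap V_2$ form $\tau$-orbits of size two. The hypothesis that $K$ is a $(1,2)$-knot with $A$ equal to the core then becomes the extra requirement that all four of these arcs be boundary-parallel in their respective handlebodies.

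For the reverse direction ($\Leftarrow$), I would assume $K$ satisfies the hypothesis and construct an explicit equivariant unknotting using only type C moves. After an equivariant isotopy to a standard position, $K$ is determined by a gluing pattern of its four boundary-parallel arcs on $\partial V_1$ together with a winding-type invariant recording how each of the two $\tau$-invariant arcs of $K \cap V_1$ wraps around $A$. A type C crossing change performed near the axis can be arranged to decrease this winding invariant by one, and finitely many such moves reduce $K$ to a configuration equivariantly isotopic to the standard unknot, so $\widetilde{u}_C(K) < \infty$.

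For the forward direction ($\Rightarrow$), I would run an unknotting sequence backward from the unknot (which trivially satisfies the hypothesis) and show that the $(1,2)$-with-axis-as-core property is preserved at each step. A type C crossing change is a local modification supported in an invariant 3-ball centered on a point of $A$, so if $V_1$ is chosen to contain this ball, the move only alters $K \cap V_1$ by a local crossing change between $\tau$-invariant arcs. An inspection of the two possible local models should then show that if the arcs of $K' \cap V_1$ are boundary-parallel, so are the arcs of $K \cap V_1$. Combined with the base case, this gives the desired implication.

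The main obstacle I anticipate is controlling equivariant isotopies that occur between consecutive type C moves, which can push $K$ across $\partial V_1$ and disrupt any fixed Heegaard splitting. To handle this I would pass to the branched quotient, where $(K, A)$ becomes a pair $(\overline K, \overline A)$ consisting of an unknot $\overline A$ and an arc $\overline K$ with endpoints on $\overline A$, and reformulate the $(1,2)$-with-axis-as-core condition as an intrinsic property of this pair. In the quotient, type C moves correspond to a specific local modification at the branch locus, while equivariant isotopies in $S^3$ become ambient isotopies of $(\overline K, \overline A)$. Both the base case and the inductive step of the forward direction should then reduce to a direct inspection of the local model in the quotient picture.
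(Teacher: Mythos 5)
Your forward direction follows the same route as the paper: run the type C unknotting sequence backward from the unknot and check that each type C move preserves the property of being a $(1,2)$-knot whose axis is the core of a handlebody, by inspecting the possible local models of the move. Two caveats: the support of a type C move is not an invariant $3$-ball but an invariant neighborhood of the whole axis, since the symmetric surgery disk can run along either of the two arcs of the axis joining the two fixed points of $K$ (the paper pins this down with a Smith-theory argument in Lemma \ref{lem:typeCoutcomes}, yielding two families of outcomes, each carrying an arbitrary number of half-twists); and the ``inspection of the two possible local models'' is exactly the content you would still need to supply. With those additions, this half is essentially the paper's argument.

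The reverse direction is where your proposal has a genuine gap. You claim that, after standardization, $K$ is determined by a gluing pattern together with a single ``winding-type invariant'' that a type C move decrements, so that finitely many moves reach the unknot. This drastically underestimates the set of possible configurations. The symmetric pairs of boundary-parallel arcs in $S^1\times D^2$ form the orbit of the trivial pair under the symmetric mapping class group $\SMCG((S^1\times D^2)_4)$, and the bulk of the paper's proof of this direction is devoted to computing a generating set for that group (a Birman--Hilden-type exact sequence in Lemma \ref{lem:mcgiso}, generators of $\PMCG((S^1\times D^2)_2)$, and a liftability criterion, culminating in Proposition \ref{prop:smcggens} with the seven generators $t_{\widetilde{\beta}}, \widetilde{m}, \widetilde{\ell}_+, \widetilde{\ell}_-, \widetilde{\tau}, \widetilde{\sigma}, \rho$) and then checking, generator by generator, that the image of the trivial arcs is reachable by type C moves (Lemma \ref{lem:12typeC}). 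A single integer winding number cannot record the effect of an arbitrary word in these generators --- for instance the two inequivalent longitudinal drags $\widetilde{\ell}_{\pm}$ and the point-swap $\widetilde{\sigma}$ --- so your claim that finitely many type C moves reduce $K$ to the standard configuration is unsupported as stated. To repair this you would need to identify the full group of configurations and show that type C moves realize the action of each generator on the trivial arcs, which is precisely what the paper does.
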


The proof of Theorem \ref{thm:1_2typeC} involves studying the symmetric mapping class group of a solid torus with 4 marked points on the boundary; we give a list of generators for this mapping class group in Proposition \ref{prop:smcggens}.

\subsection{Lower bounds} \label{subsec:lb} We now state some lower bounds for $\widetilde{u}_A(K),\widetilde{u}_B(K),$ and $\widetilde{u}_C(K)$ which are useful in proving Theorem \ref{thm:nonadditive}, but may be of independent interest. Our theorems are in terms of the quotient knots $\mathfrak{q}_1(K)$ and $\mathfrak{q}_2(K)$ of a strongly invertible knot $K$; see Definition \ref{def:quotients}.

\begin{restatable}{theorem}{typeAbounds}
\label{thm:typeAbounds}
Let $K$ be a strongly invertible knot. Then $\widetilde{u}_A(K) \geq \max(u(\mathfrak{q}_1(K)), u(\mathfrak{q}_2(K))).$
\end{restatable}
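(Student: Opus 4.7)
The plan is to push a type A equivariant unknotting sequence for $K$ to the quotient $S^3/\tau \cong S^3$ and read off ordinary unknotting sequences for $\mathfrak{q}_1(K)$ and $\mathfrak{q}_2(K)$ of the same length. Write $n = \widetilde{u}_A(K)$, let $\tau$ denote the strong inversion on $K$, and let $\alpha$ denote its axis. I would begin by fixing an equivariant regular homotopy from $K$ to a strongly invertible unknot $U$ realizing $\widetilde{u}_A(K) = n$, and decomposing it as
\[
K = K^{(0)} \to K^{(1)} \to \cdots \to K^{(n)} = U,
\]
where each arrow consists of an equivariant ambient isotopy followed by a single type A crossing change. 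A type A crossing change is off-axis, so it preserves $|K \cap \alpha|$; since $K$ and $U$ both meet $\alpha$ transversely in two points, every $K^{(j)}$ is again a strongly invertible knot. By standard equivariant general position, the intermediate isotopies may be taken to run through strongly invertible knots as well, so that the quotient knots $\mathfrak{q}_i(K^{(j)})$ are well-defined for each $j$ and each $i \in \{1, 2\}$.

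I would then pass the sequence to the quotient. An equivariant ambient isotopy through strongly invertible knots descends to an ambient isotopy of $S^3$ carrying $\mathfrak{q}_i(K^{(j)})$ to $\mathfrak{q}_i(K^{(j+1)-})$, where $K^{(j+1)-}$ denotes the configuration immediately before the next crossing change. A type A equivariant crossing change is supported at a symmetric pair of off-axis double points, and its quotient is a single crossing change of the quotient arc $\overline{K^{(j+1)-}}$ disjoint from the image $\bar{\alpha}$ of the axis; this is therefore a crossing change of the knot $\mathfrak{q}_i(K^{(j+1)-})$ that takes it to $\mathfrak{q}_i(K^{(j+1)})$. Concatenating over $j$ gives a sequence
\[
\mathfrak{q}_i(K) = \mathfrak{q}_i(K^{(0)}) \to \cdots \to \mathfrak{q}_i(K^{(n)}) = \mathfrak{q}_i(U)
\]
of $n$ crossing changes (with ambient isotopies in between) from $\mathfrak{q}_i(K)$ to $\mathfrak{q}_i(U)$. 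Because the strong inversion on the unknot is unique up to equivariant isotopy, $\mathfrak{q}_i(U)$ is itself the unknot, so $u(\mathfrak{q}_i(K)) \leq n$. Taking the maximum over $i \in \{1, 2\}$ completes the argument.

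The step I expect to be the main obstacle is the equivariant general position claim ensuring that each intermediate isotopy can be performed through strongly invertible knots, i.e., through embeddings meeting $\alpha$ transversely in exactly two points; this is what lets the quotients $\mathfrak{q}_i(K^{(j)})$ vary continuously with $j$. Once this is in hand, the rest of the proof is purely a matter of translating off-axis equivariant moves upstairs into genuine crossing changes downstairs.
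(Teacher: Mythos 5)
Your argument is correct and is essentially the paper's proof: the paper likewise observes that a type A pair $\{c,\rho(c)\}$ descends to a single crossing change on each of $\mathfrak{q}_1(K)$ and $\mathfrak{q}_2(K)$, so a length-$n$ type A unknotting sequence induces length-$n$ unknotting sequences for both quotient knots. Your additional care about equivariant general position and the triviality of $\mathfrak{q}_i(U)$ only makes explicit what the paper leaves implicit.
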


To state our lower bound for $\widetilde{u}_B(K)$, let $u_4(K)$ be the minimum number of 4-moves needed to unknot $K$; see Section \ref{sec:B}.

\begin{restatable}{theorem}{typeBbounds}
\label{thm:typeBbounds}
Let $K$ be a strongly invertible knot. Then $\widetilde{u}_B(K) \geq u_4(\mathfrak{q}_1(K)) + u_4(\mathfrak{q}_2(K))$.
\end{restatable}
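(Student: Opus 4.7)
The plan is to descend an optimal equivariant unknotting sequence for $K$ via type B moves into a pair of sequences of 4-moves on the quotient knots $\mathfrak{q}_1(K)$ and $\mathfrak{q}_2(K)$, partitioning the total count. Write $\pi:S^3\to S^3$ for the branched quotient by the strong involution: the axis $A$ descends to an unknot $A'$, and $K$ descends to an arc $\pi(K)$ whose endpoints $\pi(p),\pi(q)$ (images of the two points of $K\cap A$) split $A'$ into subarcs $A'_1,A'_2$, so that $\mathfrak{q}_i=\pi(K)\cup A'_i$.

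The central local claim is that a single type B crossing change corresponds to a 4-move on exactly one of the two quotient knots. After an equivariant isotopy we may localize the move to an invariant ball $B$ centered on a point $r\in A$ where two strands of $K$ (swapped by the involution) meet. A crossing change between these strands amounts to inserting a full twist ($\sigma^{\pm 2}$) between them about their midpoint, which lies on $A$; since the branched cover $\pi|_B$ doubles angles about $A$, this becomes two full twists---i.e.\ four half-twists---of $\pi(K)$ around $A'$ inside $\pi(B)$. Choosing $B$ sufficiently small, we may arrange that $\pi(B)$ meets $A'$ only in the subarc $A'_i$ containing $\pi(r)$, whence this is a bona fide 4-move on $\mathfrak{q}_i=\pi(K)\cup A'_i$ between the parallel strands $\pi(K)$ and $A'_i$. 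Viewed from the other quotient knot $\mathfrak{q}_j$ (with $j\neq i$), the same upstairs move still modifies $\pi(K)$ by four half-twists around $A'_i$; but $A'_i$ is not part of $\mathfrak{q}_j$, so these coils may be unwound past $A'_i$ by ambient isotopy in $S^3$, leaving $\mathfrak{q}_j$ unchanged up to isotopy.

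To conclude, let $n_i$ be the number of type B moves in an optimal unknotting sequence for $K$ whose crossing point descends to $A'_i$, so that $n_1+n_2=\widetilde{u}_B(K)$. By the previous paragraph, the induced sequences on $\mathfrak{q}_1(K)$ and $\mathfrak{q}_2(K)$ consist of $n_1$ and $n_2$ four-moves interspersed with ambient isotopies, ending at the quotient knots of a standard symmetric unknot---both of which are unknots. Hence $n_i\geq u_4(\mathfrak{q}_i(K))$, and summing yields the desired inequality. The main obstacle is the local identification in the second paragraph: one must verify, via the angle-doubling under the branched cover, that the descended change is exactly a 4-move (rather than some other rational tangle replacement), and that it affects only one of the two quotient knots, with the modification in the other being isotopically trivial in $S^3$.
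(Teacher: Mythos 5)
Your proposal is correct and follows essentially the same route as the paper: the key local claim is exactly the paper's Lemma \ref{lemma:4moves} (a type B move descends to a 4-move on the quotient knot containing the relevant axis arc and leaves the other quotient unchanged), followed by the same partition-and-count argument. The only difference is cosmetic: you justify the local identification by angle-doubling in the branched cover, where the paper delegates this to Figure \ref{fig:typeBto4move}.
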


To state our lower bound for $\widetilde{u}_C(K)$, let $u_{nb}(K)$ be the minimum number of non-orientable band moves needed to unknot $K$; see Section \ref{sec:C}.

\begin{restatable}{theorem}{typeCbounds}
\label{thm:typeCbounds}
Let $K$ be a strongly invertible knot. Then $\widetilde{u}_C(K) \geq u_{nb}(\mathfrak{q}_1(K)) + u_{nb}(\mathfrak{q}_2(K))$.
\end{restatable}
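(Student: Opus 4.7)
The plan is to push an equivariant unknotting sequence realizing $\widetilde{u}_C(K)$ through the quotient map $\pi \colon S^3 \to S^3/\tau \cong S^3$ and count its effect on each of $\mathfrak{q}_1(K)$ and $\mathfrak{q}_2(K)$ separately. Setting $A = \mathrm{Fix}(\tau)$, $A' = \pi(A)$, and $\alpha = \pi(K)$, the image $\alpha$ is an arc whose two endpoints $\pi(K \cap A)$ separate the branch unknot $A'$ into arcs $\alpha_1$ and $\alpha_2$, so that $\mathfrak{q}_i(K) = \alpha \cup \alpha_i$.

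The crux is the local model for a single type C self-intersection. Upstairs, two strands of $K$ swapped by $\tau$ cross transversely at a point $q \in A$. Since the strands are identified by $\tau$, the event does not descend to a self-intersection of $\alpha$ itself; instead, viewed in the quotient, $\alpha$ makes a single local passage across $A'$ at $p = \pi(q)$, and the $\pi$-rotation that $\tau$ induces on the normal plane to $A$ at $q$ supplies the M\"obius half-twist needed to interpret this passage as a non-orientable band surgery on the quotient side. After a small generic perturbation of the homotopy, $p$ lies in the interior of exactly one of the two arcs, say $\alpha_1$; in that case the band attaches $\alpha$ to $\alpha_1$, contributing exactly one non-orientable band move to $\mathfrak{q}_1(K) = \alpha \cup \alpha_1$, while $\mathfrak{q}_2(K) = \alpha \cup \alpha_2$ undergoes only an ambient isotopy at that instant.

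Given this local picture, the counting is immediate. An equivariant regular homotopy from $K$ to the strongly invertible unknot using exactly $n = \widetilde{u}_C(K)$ type C moves descends, for each $i$, to a sequence of ambient isotopies and non-orientable band moves that unknots $\mathfrak{q}_i(K)$ (using that both quotients of the strongly invertible unknot are themselves unknots). Writing $n_i$ for the number of type C moves whose projected event lies in $\alpha_i$, we have $n_1 + n_2 = n$ and $n_i \geq u_{nb}(\mathfrak{q}_i(K))$ by definition of $u_{nb}$; summing yields the theorem.

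The main technical obstacle is establishing the local model --- identifying the downstairs effect of a type C self-intersection as exactly one non-orientable band move, with the M\"obius twist accounted for by the $\tau$-action on the normal plane to $A$. Because the two strands swapped by $\tau$ share the same projected tangent at $q$, the passage of $\alpha$ through $A'$ looks tangential in the quotient, and one must argue carefully (via a local coordinate computation near a branch point) that this tangential passage is the shadow of a single non-orientable band surgery rather than, say, a cosmetic move or a move that affects both quotients. After this is in place, the general position argument (avoiding the degenerate cases $p \in \partial \alpha$ or two events projecting to the same point of $A'$) and the counting are routine.
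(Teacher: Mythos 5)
Your overall strategy coincides with the paper's: establish a local model showing that each type C event descends to a single non-orientable band move on exactly one of $\mathfrak{q}_1(K)$, $\mathfrak{q}_2(K)$ while leaving the other unchanged, then count. The counting step is fine. The gap is in the local model itself, which you correctly identify as the crux but then describe incorrectly. You write that at a type C event ``two strands of $K$ swapped by $\tau$ cross transversely at a point $q \in A$,'' and that ``since the strands are identified by $\tau$, the event does not descend to a self-intersection of $\alpha$.'' That is the local configuration of a \emph{type B} self-intersection. By definition, a type C self-intersection is the image of the \emph{two fixed points} of the involution on $S^1$: the two local strands each contain a fixed point of $\tau$, each is preserved (not exchanged) by $\tau$, and the event is the collision of the two points of $K \cap A$ as they slide along the axis. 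In the quotient, this is not a single strand of $\alpha$ passing through the branch circle $A'$; it is the two \emph{endpoints} of the arc $\alpha = \overline{K}$ passing through each other along $A'$, momentarily degenerating one of the half-axes $h_i$.

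This matters because your heuristic --- a strand crossing the branch locus picks up twisting from the $\pi$-rotation on the normal plane --- is exactly the mechanism behind Lemma \ref{lemma:4moves}, and if carried out it yields a 4-move on one quotient, not a non-orientable band move. The non-orientable band move in the type C case arises differently: the collision of the endpoints of $\overline{K}$ is realized downstairs by attaching a band to the theta graph whose core runs along the degenerating half-axis, say $h_1$; this band is parallel to $h_1$ inside $\mathfrak{q}_1(K) = \overline{K} \cup h_1$ (an R1 move, so that quotient is unchanged) and is a genuine non-orientable band move on $\mathfrak{q}_2(K)$ (see Figure \ref{fig:typeCquotient}). Note this also reverses your assignment of which quotient is affected --- the band move lands on the quotient \emph{not} containing the arc through which the collision occurs --- though that relabeling does not change the final count. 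To repair the proof you need to redo the local coordinate computation at a genuine type C event (two $\tau$-invariant strands meeting at a point of $A$) rather than at a type B event.
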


As a consequence of these theorems, we are able to show that there are knots for which $\widetilde{u}_A(K) - u(K)$ is arbitrarily large (see Corollary \ref{cor:typeAquot}), and that there is a knot for which $\widetilde{u}_B(K) - u(K) \geq 2$ (see Example \ref{ex:connectsumof41s}). On the other hand for type C moves, Theorem \ref{thm:typeCadditive} shows that $\widetilde{u}_C(K) - u(K)$ can be infinite. We make the following conjecture about type B moves.

\begin{conjecture}
There is a sequence $K_n$ of strongly invertible knots such that $\widetilde{u}_B(K_n) - u(K_n)$ is unbounded.
\end{conjecture}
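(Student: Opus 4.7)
The plan is to push Theorem \ref{thm:typeBbounds} by choosing a family on which its right-hand side grows faster than $u$ does. Fix a strongly invertible knot $J$ for which $u_4(\mathfrak{q}_1(J)) + u_4(\mathfrak{q}_2(J)) > u(J)$ — Example \ref{ex:connectsumof41s} already exhibits one such $J$ (with the gap being at least $2$), so such $J$ do exist — and set $K_n$ to be the $n$-fold equivariant connected sum $\#^n J$ for a fixed strong inversion on $J$.

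The bookkeeping steps are then the following. First verify that taking the quotient distributes over equivariant connected sum, so that $\mathfrak{q}_i(K_n) = \#^n \mathfrak{q}_i(J)$; this should follow from the definition of $\mathfrak{q}_i$ applied locally near the connect-sum region. Then combine the classical subadditivity $u(K_n) \leq n \cdot u(J)$ with Theorem \ref{thm:typeBbounds} to obtain
\[
\widetilde{u}_B(K_n) - u(K_n) \;\geq\; u_4\!\bigl(\#^n \mathfrak{q}_1(J)\bigr) + u_4\!\bigl(\#^n \mathfrak{q}_2(J)\bigr) - n\cdot u(J),
\]
and it is enough to force the right-hand side to tend to infinity.

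The hard step, and the main obstacle, is producing a linearly-growing (or super-additive) lower bound on $u_4(\#^n Q)$. Lower bounds on $u_4$ are famously scarce — Nakanishi's $4$-move conjecture remains open — and no general additivity result is known. A plausible line of attack is to locate an invariant that is simultaneously (a) well-behaved under a single $4$-move, so that each move changes it by a bounded amount, and (b) additive under connected sum; natural candidates include the first homology of the double branched cover reduced modulo an odd prime, $p$-colorings for small odd $p$, signature-type invariants, and $d$-invariants of branched double covers. If no such invariant yields a useful constant for the initial $J$, one should iterate over low-crossing strongly invertible knots (twist knots, pretzels, other $2$-bridge knots) in search of a tractable base, or alternatively replace connected sums with an iterated satellite family along the axis so that the associated quotient knots support a tailored lower bound on $u_4$.
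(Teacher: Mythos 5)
This statement is stated as a conjecture in the paper, and the authors offer no proof of it, so there is nothing to compare your argument against; it must stand on its own. It does not. Your reduction via Theorem \ref{thm:typeBbounds} is the natural one (and is exactly the mechanism behind Example \ref{ex:connectsumof41s}), and the bookkeeping steps are fine: the quotients of an iterated equivariant connected sum along the axis are connected sums of the quotients of the summands, and $u(K_n) \leq n\,u(J)$ is classical. But the inequality you arrive at only proves the conjecture if $u_4(\#^n \mathfrak{q}_1(J)) + u_4(\#^n \mathfrak{q}_2(J)) - n\,u(J) \to \infty$, and you supply no lower bound on $u_4$ of a connected sum beyond naming candidate invariants. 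That missing step is the entire content of the problem, and it is essentially as open as the conjecture itself: the only $u_4$ obstruction appearing in the paper, Theorem \ref{thm:2bridge4moves}, obstructs $u_4 = 1$ for $2$-bridge knots via the cyclic surgery theorem and gives nothing that scales with $n$, and no additivity or superadditivity of $u_4$ under connected sum is known (indeed even $u_4 < \infty$ is Nakanishi's open conjecture, cf.\ Corollary \ref{cor:typeB4move}).

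Concretely, with $J = 4_1 \# 4_1$ you would need $u_4(\#^{2n} 4_1) - n \to \infty$, whereas the only bound in hand is the subadditive upper bound $u_4(\#^{2n} 4_1) \leq 2n$. Your proposed invariants do not obviously close the gap either: a $4$-move changes the double branched cover by a $D/4$ surgery on a knot, which can change $|H_1|$ (hence the determinant and the count of $p$-colorings) by an unbounded multiplicative factor, so condition (a) of your own criterion fails for the most natural candidates; signature-type invariants do change boundedly per $4$-move and are additive, but a single $4$-move can shift the signature by up to $4$ while contributing only $1$ to $u_4$, so one must also control the per-move deficit against $u$, which you do not do. The strategy is a reasonable research program, but as written it is a proposal for where to look rather than a proof.
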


\subsection{Torus knots}
Along with the elementary lower bounds in the previous section, we compute an upper bound on the total equivariant unknotting numbers for torus knots. This upper bound turns out to be sharp, as is recorded in the following theorem.

\begin{restatable}{theorem}{torusknots}
\label{thm:torusknots}
Let $K$ be the $(p,q)$-torus knot with its unique strong inversion. Then 
\[
    \widetilde{u}(K) = u(K) = \dfrac{(p-1)(q-1)}{2}.
\]
Furthermore, there exists a minimal length equivariant unknotting sequence consisting of only type A and type B moves.
\end{restatable}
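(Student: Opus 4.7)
The lower bound is immediate: any equivariant regular homotopy is a regular homotopy, so $\widetilde{u}(K) \ge u(K)$, and $u(T(p,q)) = (p-1)(q-1)/2$ is the Milnor conjecture theorem of Kronheimer--Mrowka. The content of the theorem is therefore the matching upper bound $\widetilde{u}(T(p,q)) \le (p-1)(q-1)/2$ together with the stronger claim that a minimum-length equivariant unknotting sequence can be chosen to use only type A and type B moves.

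To produce such a sequence, the plan is to exploit a symmetric braid presentation. Draw $T(p,q)$ as the closure of $\beta_{p,q} = (\sigma_1 \sigma_2 \cdots \sigma_{p-1})^q$, situated in the plane so that the unique strong inversion is realized as a $180^\circ$ rotation of the diagram. In this picture each crossing of $\beta_{p,q}$ is either fixed by the rotation (on-axis) or paired with a distinct mirror partner (off-axis). An on-axis crossing change contributes a single type B self-intersection, while changing a mirror pair of off-axis crossings is a type A move contributing two simultaneous self-intersections. Because the braid strands meet the rotation axis transversely throughout, every on-axis crossing is of the through-the-axis variety, so the construction never introduces a type C self-intersection; this is the feature that will supply the final clause of the theorem once the upper bound is in hand.

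It then remains to select a symmetric set $S$ of $(p-1)(q-1)/2$ crossings of $\beta_{p,q}$ whose simultaneous flip produces the unknot. I would argue by induction on $p+q$, taking $p \le q$ by the symmetry $T(p,q) = T(q,p)$. The base cases $T(2, 2k+1)$ are handled directly on the $2$-braid diagram: iteratively flip the middle (on-axis) crossing to reduce equivariantly to $T(2, 2k-1)$, for a total of $k = (p-1)(q-1)/2$ type B moves. For the inductive step with $p, q \ge 3$, identify a symmetric set of $p-1$ crossings of $\beta_{p,q}$ whose simultaneous flip converts the closure to a diagram equivariantly isotopic to the standard symmetric diagram of the $(p, q-2)$-torus knot. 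The arithmetic $(p-1)(q-1)/2 - (p-1)(q-3)/2 = p-1$ matches the number of self-intersections contributed by this set: when $p-1$ is even the set consists of $(p-1)/2$ mirror pairs, giving $p-1$ type A self-intersections, and when $p-1$ is odd the set contains one on-axis crossing and $(p-2)/2$ mirror pairs, giving $1 + (p-2) = p-1$ self-intersections in total.

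The main obstacle is the diagrammatic verification at the inductive step: one must pin down exactly how the strong inversion acts on the crossings of $\beta_{p,q}$ in the chosen planar presentation, locate a symmetric set of $p-1$ crossings with the described reductive property, and verify that the flipped closure is equivariantly (and not merely topologically) isotopic to the standard symmetric diagram of the $(p, q-2)$-torus knot. The final ingredient is uniqueness of the strong inversion on torus knots, which guarantees that any symmetric diagram of the reduced knot agrees with the standard one up to equivariant isotopy, so the induction hypothesis applies. Summing across the reduction steps then yields exactly $(p-1)(q-1)/2$ self-intersections, all of type A or B, proving both parts of the theorem.
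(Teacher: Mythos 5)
Your lower bound is exactly the paper's: $\widetilde{u}(K)\geq u(K)$ plus Kronheimer--Mrowka, so the content is the upper bound, as you say. But the upper bound argument has a genuine gap at precisely the point you flag as "the main obstacle." You assert, without construction or verification, that there is a \emph{symmetric} set of $p-1$ crossings of $(\sigma_1\cdots\sigma_{p-1})^q$ whose flip carries the closure \emph{equivariantly} to the standard symmetric diagram of $T(p,q-2)$. That step is the entire difficulty of the theorem. The paper itself remarks that the authors were unable to prove a symmetric version of A'Campo's lemma and instead had to emulate Rudolph's braid-theoretic argument, which "only becomes more complicated in the equivariant setting": the actual proof goes through Proposition \ref{prop:weight} (a bound $\widetilde{u}(\widehat{B})\leq(\ell-s+1)/2$ for arbitrary positive intravergent braids), established by a double induction on $(s,\ell)$ with a word-rewriting lemma (Lemma \ref{lemma:sigmaplow}) and roughly a dozen cases tracking how $\sigma_1$'s can be symmetrically cancelled, destabilized, or traded for type A and type B moves. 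Nothing in your sketch substitutes for that case analysis, and the non-equivariant analogue of your reduction (which does exist) does not automatically symmetrize: the crossings one would naturally flip need not be arranged symmetrically about the rotation center, and the resulting isotopy to the standard diagram of $T(p,q-2)$ need not be equivariant.

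There is also a problem with your classification of moves, which undermines the "only type A and type B" clause. At a crossing fixed by the $180^{\circ}$ rotation where the two strands are each preserved (rather than exchanged) by the rotation --- which is what happens at the central crossing of the $2$-braid diagram of $T(2,2k+1)$ in your base case --- the two colliding points are the two \emph{fixed} points of the involution on $K$, so the crossing change is a type C move, not type B. In the paper's intravergent presentation this is avoided by taking $p$ odd, so that $T(2,2k+1)$ is presented as $(\sigma_1\cdots\sigma_{2k})^2$ on $2k+1$ strands with a braid word of even length: no crossing sits at the rotation center, all crossing changes come in exchanged pairs (type A), and the genuine type B moves are the local modifications of Figure \ref{fig:typeBbraidmove} replacing $\sigma_{n+1}\sigma_n\sigma_{n+1}\sigma_n$ by $\sigma_n\sigma_{n+1}$, which are not single crossing changes of the initial diagram. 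Your parity bookkeeping ("one on-axis crossing and $(p-2)/2$ mirror pairs") therefore does not match any move available in the symmetric diagram you set up, and as written your base case would use type C moves.
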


The theorem that $u(K) = \dfrac{(p-1)(q-1)}{2}$, known as the Milnor conjecture and proved in \cite{MR1241873}, is closely related to ideas in algebraic geometry about links of plane curve singularities. Milnor's conjecture originated from observations in algebraic geometry, namely that a particular resolution of a plane curve singularity produces an algebraic surface in $B^4$ with boundary $T(p,q)$. Kronheimer and Mrowka then proved that this surface has the minimum genus possible with boundary $T(p,q)$, which along with the fact that the 4-genus is a lower bound on $u(K)$ proved that $u(K) \geq \dfrac{(p-1)(q-1)}{2}$. On the other hand, the upper bound on $u(K)$, has been known for longer; see \cite{MR699004}. 

For the equivariant unknotting number $\widetilde{u}(K)$, we also have a lower bound coming from the equivariant 4-genus; see Proposition \ref{prop:4genus}. Furthermore, the equivariant 4-genus is equal to the 4-genus, since the resolved algebraic surfaces can be seen to be invariant under complex conjugation, so we can readily check that $\widetilde{u}(K) \geq \dfrac{(p-1)(q-1)}{2}$. Hence the content of Theorem \ref{thm:torusknots} is the upper bound on $\widetilde{u}(K)$.

For this upper bound, one may be tempted to emulate \cite[Theorem 4]{MR1733329}, which proves that $u(K) \geq \dfrac{(p-1)(q-1)}{2}$. However, we were unable to prove a symmetric version of the lemma used in this proof. Instead, we emulate Rudolph's original diagrammatic proof \cite{MR699004} using the braid group, which is somewhat less concise than A'Campo's proof, and only becomes more complicated in the equivariant setting. We use a particular type of symmetric braid which we call an intravergent braid (see Definition \ref{def:intravergentbraid}). Our main contribution here is the following proposition, which may be of independent interest.

\begin{restatable}{proposition}{positivebraidprop}
  \label{prop:weight}
  Let $B$ be a positive intravergent braid with $s = 2n+1$ strands and length $\ell = \len(B)$ such that the closure of $B$ is a knot $\widehat{B}$. Then $\widetilde{u}(\widehat{B}) \leq \dfrac{\ell - s + 1}{2}$.
\end{restatable}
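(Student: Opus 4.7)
The plan is to emulate Rudolph's diagrammatic proof \cite{MR699004} that the unknotting number of a positive braid closure satisfies $u(\widehat{B}) \leq (\ell - s + 1)/2$, but carried out equivariantly with respect to the involution on $B_s$ induced by the intravergent symmetry. Write $\tau$ for the involution of $B_{2n+1}$ that sends a word $\sigma_{i_1}\cdots\sigma_{i_\ell}$ to $\sigma_{s-i_\ell}\cdots\sigma_{s-i_1}$; then a braid being intravergent means $B = \tau(B)$, at least up to $\tau$-equivariant braid isotopy. I proceed by induction on $\ell - s + 1 \geq 0$, which is an even number because $\widehat{B}$ is a knot and equals twice the Seifert genus of the usual Bennequin surface of $\widehat{B}$.

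For the base case $\ell - s + 1 = 0$, any positive braid of length $s-1$ whose closure is a knot is (after conjugation) $\sigma_1 \sigma_2 \cdots \sigma_{s-1}$, whose closure is the unknot, so $\widetilde{u}(\widehat{B}) = 0$. For the inductive step, the goal is to produce a single equivariant crossing change, contributing $k$ transverse self-intersections ($k=2$ for a type A move, $k=1$ for a type B move), that yields a positive intravergent braid $B'$ of length $\ell - 2k$ on the same $s$ strands whose closure is still a knot. Once such a reduction is found, the induction hypothesis gives
\[
    \widetilde{u}(\widehat{B}) \;\leq\; k + \widetilde{u}(\widehat{B'}) \;\leq\; k + \frac{\ell - 2k - s + 1}{2} \;=\; \frac{\ell - s + 1}{2}.
\]

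The mechanism for producing $B'$ follows Rudolph's scheme: in any positive non-minimal braid word on $s$ strands whose closure is a knot, one can locate a repeated generator $\sigma_i$ whose inner subword commutes past one of the two occurrences, so that changing a single $\sigma_i$ to $\sigma_i^{-1}$ triggers the cancellation of two generators and drops the length by $2$. In the intravergent setting one must find such a reduction pattern \emph{together with} its $\tau$-image. If the pattern lies strictly on one side of the symmetry axis, I pair it with its reflected copy to perform a type A crossing change, changing a symmetric pair of crossings and cancelling four generators in total. If the pattern straddles the axis, I instead perform a type B crossing change at the unique $\tau$-fixed location, cancelling two generators.

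The main obstacle is the control of this symmetric cancellation: after the equivariant sign change, one must verify that the resulting braid admits a $\tau$-equivariant simplification -- via $\tau$-symmetric conjugations and $\tau$-symmetric (de)stabilizations -- back to a positive intravergent word of the stated shorter length. Equivariant Markov theory is considerably more restrictive than its classical counterpart, so the hardest step will be a careful case analysis of where the repeated generator can sit in an intravergent word. I expect the on-axis (type B) case to be the subtler one, since both the $\tau$-paired middle generators $\sigma_n$, $\sigma_{n+1}$ and the fixed middle strand must be handled simultaneously; the off-axis case should reduce cleanly to two independent applications of the classical Rudolph lemma, propagated through the symmetry by $\tau$.
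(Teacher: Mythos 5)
Your high-level strategy is exactly the one the paper follows -- an equivariant adaptation of Rudolph's inductive reduction of positive braid words -- and your accounting is right: a type A move costs $2$ and drops the length by $4$, a type B move costs $1$ and drops the length by $2$, so each reduction preserves the bound. However, the proposal has a genuine gap: the entire technical content of the proof is the claim that such an equivariant reduction is \emph{always available}, and you explicitly defer this (``the hardest step will be a careful case analysis of where the repeated generator can sit'') rather than carry it out. The classical Rudolph argument finds a repeated $\sigma_i$ and commutes the inner subword past it, but equivariantly one must simultaneously control the $\tau$-reflected copy, and the two patterns can interact across the axis. The paper handles this with a normalization lemma (Lemma \ref{lemma:sigmaplow}, which puts a word containing a single $\sigma_1$ into one of three standard forms) followed by a long case analysis on the number and positions of the occurrences of $\sigma_1$; the delicate cases (3.2.1)--(3.2.4) are precisely the ``straddling'' configurations you flag as subtle, and resolving them requires identifying the specific middle patterns $\sigma_{n+1}\sigma_n\sigma_{n+1}\sigma_n$ and $\sigma_n\sigma_{n+1}\sigma_n\sigma_{n+1}$ on which a type B move acts. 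Without this analysis there is no proof that the inductive step can always be executed.

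A second, structural problem: your induction is on $\ell - s + 1$ and your inductive step always performs a crossing change, but some necessary reductions perform \emph{no} crossing change. When $\sigma_1$ occurs exactly once (or can be made to occur exactly once via the braid relation, as in the paper's case (3.1.1)), the correct move is a symmetric pair of Markov destabilizations, which decreases $s$ and $\ell$ by $2$ each and leaves $\ell - s + 1$ unchanged, so your induction quantity does not decrease and termination is not guaranteed. The paper instead inducts on the pair $(s,\ell)$ in lexicographic order, which accommodates both the crossing-change reductions (fixed $s$, smaller $\ell$) and the destabilizations (smaller $s$). You would need to repair the induction scheme as well as supply the missing case analysis.
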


\subsection{Questions}
Finally, we record some basic unanswered questions here.

\begin{question}
In Figure \ref{fig:selfintersectiontypes}, we see that for the unique (up to symmetry) strong inversion on the figure-eight knot we have $\widetilde{u}_B(4_1) = \widetilde{u}_C(4_1) = 1$, and that $\widetilde{u}_A(4_1) \leq 2$, since $\widetilde{u}_A(3_1) = 1$ (by inspection). Is $\widetilde{u}_A(4_1) = 1$ or is $\widetilde{u}_A(4_1) = 2$? More generally, what are the type A unknotting numbers of the strongly invertible twist knots $K_n$ as shown in Figure \ref{fig:twistknots}?
\end{question}

\begin{remark}
For these twist knots $K_n$, Theorem \ref{thm:typeAbounds} gives the lower bound $\widetilde{u}_A(K_n) \geq n$, since the quotient (as shown in Figure \ref{fig:twistknots}) is $T(2,2n+1)$ which has unknotting number $n$, and the other quotient is the unknot. However, the best upper bound we found was $\widetilde{u}_A(K_n) \leq 2n$ coming from the apparent sequence of type A moves.
\end{remark}

\begin{question}
In Remark \ref{rmk:3131} we saw that $\widetilde{u}_B(3_1 \# r3_1) \leq 4$, but our best lower bound comes from Theorem \ref{thm:typeBbounds} which gives that $\widetilde{u}_B(3_1 \# 3_1) \geq 2$. What is the exact value of $\widetilde{u}_B(3_1 \# r3_1)$?
\end{question}

\subsection{Acknowledgements} We would like to thank Kenneth L. Baker and Maggie Miller for directing us to some relevant literature, and Ben Williams and Liam Watson for helpful conversations and advice. This project started while both authors were postdocs at UBC, and the second author was partially supported by the Pacific Institute for the Mathematical Sciences (PIMS).

\section{Equivariant crossing changes and unknotting numbers}
\label{sec:background}

Fix an order 2 symmetry $\rho\colon S^1 \to S^1$ with two fixed points, and an order 2 symmetry $\rho\colon S^3 \to S^3$ with a fixed circle. Consider an equivariant homotopy $h\colon (S^1 \times I,\rho) \to (S^3 \times I,\rho)$ between two strongly invertible knots $K = h(S^1,0)$ and $K' = h(S^1,1)$, such that $h(S^1, t) = (S^3, t)$ for all $t \in I$, and $h$ has only transverse self-intersections in the interior. We would like to define the equivariant unknotting number as the minimum number of self-intersection points in such a homotopy between a given strongly invertible knot and the unknot. The presence of a symmetry, however, means that these self-intersections are naturally classified into the following three types.
\begin{enumerate}
\item Self-intersections which do not lie on the axis of symmetry come in symmetric pairs. We refer to such self-intersections as \emph{type A}.
\item Self-intersections which do lie on the axis of symmetry come in two types.
\begin{enumerate}
\item If the self-intersection point is the image of a pair of points exchanged by the symmetry on $S^1$, then we call the self-intersection \emph{type B}.
\item If the self-intersection point is the image of the two fixed points on $S^1$, then we call the self-intersection point \emph{type C}.
\end{enumerate}
\end{enumerate}

These three types of self-intersections can be seen diagrammatically in Figure \ref{fig:selfintersectiontypes}, along with the corresponding crossing-change moves. Note that a type B or type C move consists of a single crossing change, but a type A move consists of a pair of crossing changes.

\begin{definition}
For X $\in \{A,B,C\}$, the \textbf{type X unknotting number} $\widetilde{u}_X(K)$ of a strongly invertible knot $K$ is the minimum number of type X moves necessary to reduce $K$ to the unknot. If $K$ cannot be reduced to the unknot in a finite number of type X moves, then we say that $\widetilde{u}_X(K) = \infty$.
\end{definition}

\begin{definition} \label{def:totaleu}
The \textbf{total equivariant unknotting number} $\widetilde{u}(K)$ is the minimum number of equivariant crossing changes of all types necessary to unknot $K$. 
\end{definition}
\begin{remark}
Noting that a type A move consists of two crossing changes, we have the immediate inequalities 
\begin{itemize}
\item $\widetilde{u}(K) \leq 2\widetilde{u}_A(K)$,
\item $\widetilde{u}(K) \leq \widetilde{u}_B(K)$, and
\item $\widetilde{u}(K) \leq \widetilde{u}_C(K)$.
\end{itemize}
We could also have chosen the total equivariant unknotting number to only increment by one for each type A move. We made the choice to increment by two for each type A move so that we have a stronger lower bound on the equivariant 4-genus.
\end{remark}

\begin{definition}
The \textbf{equivariant 4-genus} $\widetilde{g}_4(K)$ of a strongly invertible knot $K$ is the minimum genus of a surface $\Sigma$ smoothly properly embedded in $B^4$ such that there is a smooth extension $\rho$ of the symmetry on $S^3$ to $B^4$ with $\rho(\Sigma) = \Sigma$.
\end{definition}
\begin{proposition} \label{prop:4genus}
Let $K$ be a strongly invertible knot. Then $\widetilde{g}_4(K) \leq \widetilde{u}(K)$. 
\end{proposition}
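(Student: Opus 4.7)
The plan is to equivariantly adapt the standard argument that $g_4(K) \leq u(K)$. From an equivariant unknotting sequence achieving $\widetilde{u}(K)$, I first realize the sequence as an equivariant regular homotopy $h\colon (S^1 \times I, \rho) \to (S^3 \times I, \rho)$ from $K$ to the unknot, with the property that the total number of transverse double points of $h$ equals $\widetilde{u}(K)$ under the paper's counting convention (each type A move contributing 2 double points, each type B or C move contributing 1). Extending $\rho$ to an involution on $B^4$ with 2-disk fixed set and capping the unknot end of $h$ with this fixed disk produces a properly equivariantly immersed disk $D \subset B^4$ with $\partial D = K$ and exactly $\widetilde{u}(K)$ transverse double points.

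The second step is to equivariantly resolve each double point of $D$ by the classical procedure: remove a small invariant 4-ball neighborhood of the double point and replace the cone-on-Hopf-link inside by an embedded invariant genus-one piece with the same Hopf link boundary, which adds 1 to the genus per double point. For type A, the two paired double points form a free $\rho$-orbit, so I carry out the standard local resolution at one point and transport it by $\rho$ to the other; each type A pair contributes 2 to the genus, matching its contribution of 2 to $\widetilde{u}(K)$. For type B and type C double points, which are fixed by $\rho$, the link of the double point in a small invariant 3-sphere is an invariant Hopf link, and I need an invariant annulus (pushed into the invariant 4-ball) bounded by this Hopf link; this is the main technical step. For type B, where $\rho$ swaps the two Hopf components, a symmetric Hopf band provides the invariant annulus. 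For type C, where $\rho$ preserves each component and rotates it by $\pi$, one can write down an explicit invariant annulus: in torus coordinates $(r,\alpha,\beta)$ on the invariant 3-sphere, the annulus $\{\alpha + \beta = \pi/2\}$ is preserved by the standard involution $(\alpha,\beta) \mapsto (-\alpha, \pi - \beta)$ and connects the two Hopf components at $r = 0$ and $r = 1$; pushing it radially into the 4-ball (radial scaling commutes with the linear involution) gives the required invariant resolution.

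After all resolutions, the resulting surface is smoothly, properly, equivariantly embedded in $B^4$ with boundary $K$ and genus exactly $\widetilde{u}(K)$, yielding $\widetilde{g}_4(K) \leq \widetilde{u}(K)$. The main obstacle in this plan is the invariant resolution at axis-fixed double points, which reduces to producing invariant annular Seifert surfaces for invariant Hopf links; I expect this to follow from the explicit local models sketched above, but it is the one place where the equivariance condition requires genuine verification beyond the non-equivariant argument.
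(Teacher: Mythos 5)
Your proof is correct and takes essentially the same approach as the paper's: construct the standard surface associated to the equivariant unknotting sequence and check that it can be made equivariant, with each type A move contributing $2$ to the genus and each type B or C move contributing $1$, matching the counting convention in the definition of $\widetilde{u}(K)$. The paper's proof simply asserts in one line that ``the standard cobordism can be made equivariant at a type A, B, or C move,'' and your local models --- transporting the resolution across the free $\rho$-orbit for type A, and producing invariant Hopf-band annuli (e.g.\ the conjugation-invariant Milnor fibre $zw=\epsilon$) at the axis-fixed double points for types B and C --- are exactly the verification the paper leaves to the reader.
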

\begin{proof}
As in the non-equivariant setting, we can construct a cobordism between two knots related by a sequence of equivariant crossing changes, where the genus of the cobordism is equal to the number of crossing changes. Indeed, it is easy to see that the standard cobordism can be made equivariant at a type A, B, or C move. (Note that this matches the definition of $\widetilde{u}(K)$ since a type A move contributes two crossing changes, while a type B or C move contributes one.)
\end{proof}

\begin{remark}
Note that for any two strongly invertible knots $K_1$ and $K_2$, we have that $\widetilde{g}_4(K_1 \# K_2) \leq \widetilde{g}_4(K_1) + \widetilde{g}_4(K_2)$, since the two minimal genus surfaces can be glued equivariantly. 
\end{remark}

\begin{definition} \label{def:quotients}
Let $K$ be a strongly invertible knot with axis of symmetry $A$. Then in the quotient of $S^3$ by the strong inversion, there is a quotient theta graph consisting of the image of $K$ and the image of $A$. The three arcs of
 this theta graph are the image $\overline{K}$ of $K$ and two arcs which make up the image of $A$ which we refer to arbitrarily as $h_1$ and $h_2$. The two \textbf{quotient knots} of $K$ are $\mathfrak{q}_1(K) = \overline{K} \cup h_1$ and $\mathfrak{q}_2(K) = \overline{K} \cup h_2$.
\end{definition}

\section{Torus knots}
In this section we will prove Theorem \ref{thm:torusknots}, which we restate here for convenience. 

\torusknots*

In order to prove Theorem \ref{thm:torusknots} we will work with symmetric braids. Although symmetric braids whose closures are strongly invertible knots have been studied in \cite{merz2024alexandermarkovtheoremsstrongly}, they study braids which are symmetric under rotation around a line in the plane of the diagram, while for our application it is more convenient to use braids which are symmetric under rotation around a line orthogonal to the plane of the diagram.
\begin{definition} \label{def:intravergentbraid} Let $\sigma_i$ refer to the positive half-twist between the $i$th and $i+1$th strands of a braid. An \emph{intravergent braid} is a braid on $2n+1$ strands given by a word of length $2m$ such that if $\sigma_i^{\pm1}$ appears at index $j$, then $\sigma_{2n+1-i}^{\pm1}$ appears at index $m - j$. (See for example Figure \ref{fig:intravergentbraid}.)
\end{definition}

\begin{figure}
\scalebox{.5}{\includegraphics{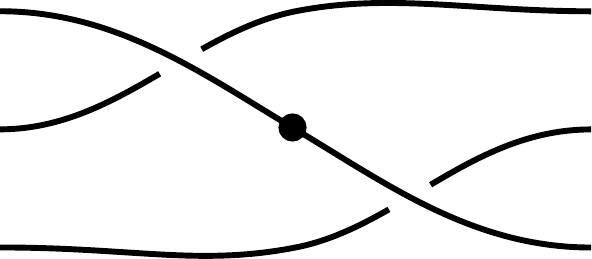}}
\caption{An intravergent braid on 3 strands given by the word $\sigma_1 \sigma_2$ when read left to right. The symmetry is given by $\pi$ rotation around the central marked point.}
\label{fig:intravergentbraid}
\end{figure}

Observe that the closure of an intravergent braid, if it is a knot, is a strongly invertible knot. Additionally, we will say that an intravergent braid is positive if every generator $\sigma_i$ appears with only positive powers. 

\begin{definition}
The \emph{length} of an intravergent braid $B$, written $\len(B)$, is the length of the underlying braid word for $B$. 
\end{definition}

The following proposition is an equivariant analog of \cite[Proposition on page 34]{MR699004}. This is our contribution needed to prove Theorem \ref{thm:torusknots}.

\positivebraidprop*

To prove this proposition we will need the following lemma.

\begin{lemma}\label{lemma:sigmaplow}
Let $w$ be a word in the (positive) generators $\{\sigma_i : 1 \leq i \leq 2n\}$ of the braid group $B_{2n+1}$ which begins with $\sigma_1$ and contains no other $\sigma_1$s. Then $w$ is equivalent (as an element of $B_{2n+1}$) to a word $w'$ with $\len(w') = \len(w)$ such that at least one of the following is true:
\begin{enumerate}
\item $w'$ is the word $\sigma_1 \sigma_2 \dots \sigma_i$ for some $i \leq 2n$, or
\item $w'$ is a word containing $\sigma_i^2$ for some $1 < i \leq 2n$, or 
\item $w'$ is a word containing exactly one $\sigma_1$, but not beginning with $\sigma_1$.
\end{enumerate}
Similarly, if $w$ ends with $\sigma_1$ and contains no other $\sigma_1$s, then $w$ is equivalent to a word $w'$ as above, but with the order of the letters in $w'$ reversed.

\end{lemma}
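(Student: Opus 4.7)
The plan is to run an induction on the length of a prefix of $w$ that has been brought into the pattern $\sigma_1 \sigma_2 \cdots \sigma_p$. Starting from the hypothesis that $w$ begins with $\sigma_1$, I would prove by induction on $p$ that after finitely many braid moves $w$ is equivalent to a word of the same length whose first $p$ letters are $\sigma_1 \sigma_2 \cdots \sigma_p$ and whose remaining letters contain no $\sigma_1$, unless one of conclusions (2) or (3) has already been achieved. Since $p$ can increase at most $\len(w) - 1$ times before the prefix exhausts $w$ (yielding conclusion (1)), the procedure must terminate in one of the three desired configurations.

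For the inductive step, suppose we have $w = \sigma_1 \sigma_2 \cdots \sigma_p \cdot v$ with $v$ containing no $\sigma_1$, and let $\sigma_j$ denote the first letter of $v$ (so $j \neq 1$). There are four cases depending on $j$: (A) if $j = p$, then the subword $\sigma_p \sigma_p$ appears and (2) holds; (B) if $j = p+1$, then the prefix simply extends to $\sigma_1 \cdots \sigma_{p+1}$ and I pass to the next inductive step; (C) if $j \geq p+2$, then $\sigma_j$ commutes with each of $\sigma_p, \sigma_{p-1}, \ldots, \sigma_1$ and can be shuffled all the way past them to the front, giving (3); (D) if $2 \leq j \leq p - 1$, an additional manipulation is needed.

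In case (D) I would first commute $\sigma_j$ leftward past $\sigma_p, \sigma_{p-1}, \ldots, \sigma_{j+2}$ — each of these indices differs from $j$ by at least $2$, so the commutation relations apply. This brings the word into the form $\sigma_1 \cdots \sigma_{j-1} (\sigma_j \sigma_{j+1} \sigma_j) \sigma_{j+2} \cdots \sigma_p \cdot v'$, where $v = \sigma_j v'$. Applying the braid relation $\sigma_j \sigma_{j+1} \sigma_j = \sigma_{j+1} \sigma_j \sigma_{j+1}$ at the marked triple produces an isolated $\sigma_{j+1}$ sitting immediately to the right of $\sigma_{j-1}$. Because $j \geq 2$, this new $\sigma_{j+1}$ has index at least $3$, hence commutes with each of $\sigma_{j-1}, \sigma_{j-2}, \ldots, \sigma_1$ and can be pushed to the very front. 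The resulting word has the same length as $w$, begins with $\sigma_{j+1} \neq \sigma_1$, and still contains exactly one $\sigma_1$ because the only braid relation invoked involved generators of index $\geq 2$; this is conclusion (3).

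The reversed statement (for $w$ ending with $\sigma_1$) follows by applying the forward version to the reverse word $w^{\mathrm{rev}}$, using that both the commutation relation and the braid relation $\sigma_i \sigma_{i+1} \sigma_i = \sigma_{i+1} \sigma_i \sigma_{i+1}$ are invariant under reversal of the word, and then reversing the resulting equivalence. The main obstacle is case (D); cases (A)--(C) are essentially immediate, but in (D) one needs to identify the specific shuffle-plus-braid-relation sequence that converts the inconvenient $\sigma_j$ into a $\sigma_{j+1}$ of sufficiently large index to commute past $\sigma_1$. Once this step is in hand, the rest of the argument is bookkeeping.
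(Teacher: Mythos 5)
Your proposal is correct and follows essentially the same argument as the paper: locate the first letter deviating from the staircase pattern $\sigma_1\sigma_2\cdots$ and split into the same three nontrivial cases (a repeated generator giving a square, a large-index generator that commutes to the front, and a small-index generator where the braid relation $\sigma_j\sigma_{j+1}\sigma_j=\sigma_{j+1}\sigma_j\sigma_{j+1}$ produces a $\sigma_{j+1}$ that commutes past $\sigma_1$). The only cosmetic difference is that you induct on the length of the matched prefix rather than on $\len(w)$, which does not change the substance.
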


\begin{proof}
We proceed by induction on the length of $w$. For the base case, we have $w = \sigma_1$, which satisfies (1).

Now suppose that the statement is true for all words with length $< m$, and let $w$ be a word of length $m$ which starts with $\sigma_1$ and contains no other $\sigma_1$s. If $w$ is the word $\overline{w} = \sigma_1 \sigma_2 \dots \sigma_m$, then $w$ satisfies (1). If not, then there is a first letter in $w$ which differs from $\overline{w}$; call it $\sigma_i$ at index $j$ with $j \neq i$ and $i > 1$ so that $w$ begins with $\sigma_1 \sigma_2 \dots \sigma_{j-2} \sigma_{j-1} \sigma_i$. There are then three cases.
\begin{enumerate}[label=(\alph*)]
\item If $i = j-1$, then $w$ satisfies (2).
\item If $i > j$, then we can commute $\sigma_i$ past all of the previous letters in $w$, so that $w'$ begins with $\sigma_i \sigma_1 \sigma_2 \dots$ and hence $w$ satisfies (3).
\item If $i < j-1$, then we can commute $\sigma_i$ to the left until we run into $\sigma_{i+1}$, so that $w$ is equivalent to a word beginning with $\sigma_1 \sigma_2 \dots \sigma_i \sigma_{i+1} \sigma_i$. We can then apply the braid relation to get an equivalent word beginning with $\sigma_1 \sigma_2 \dots \sigma_{i-1}\sigma_{i+1} \sigma_{i} \sigma_{i+1}$. Now since $i > 1$, we can commute the first $\sigma_{i+1}$ to the beginning of the word so that we have a word $w'$ which begins $\sigma_{i+1} \sigma_1 \sigma_2 \dots \sigma_{i-1}\sigma_{i} \sigma_{i+1}$. Hence $w$ satisfies (3).
\end{enumerate}
\end{proof}

\begin{proof}[Proof of Proposition \ref{prop:weight}]
We will induct on the pair $(s,\ell)$ with respect to the lexicographical order. Clearly when $s = 1$ we have $\ell = 0$ and hence $\widehat{B}$ is the unknot. Thus the statement is true for $(s,\ell) = (1,0)$.

We now suppose that the statement is true for all parameters less than $(s,\ell)$, and consider an arbitrary braid $B$ with $s = 2n+1$ strands and length $\ell = 2m$ whose closure is a knot. We have four cases:
\begin{enumerate}
\item $\sigma_1$ does not appear as a letter in $B$,
\item $\sigma_1$ appears exactly once as a letter in $B$,
\item $\sigma_1$ appears exactly twice as a letter in $B$, or
\item $\sigma_1$ appears more than twice as a letter in $B$.
\end{enumerate} 

In case (1), either $\widehat{B}$ is a link, or $s = 1$ which was already covered as the base case.

In case (2), we can apply a pair of symmetric Reidemeister 1 moves, realized on the braid as a symmetric pair of Markov moves, which reduces the number of strands to $s - 2$ and the length of the word to $\ell - 2$. Thus by our inductive assumption we have that 
\[
    \widetilde{u}(\widehat{B}) \leq \dfrac{(\ell - 2) - (s-2) + 1}{2} = \dfrac{\ell - s + 1}{2}.
\]
In case (3) we have two sub-cases.
\begin{enumerate}
\item[(3.1)] The first case is when both appearances of $\sigma_1$ occur with index $\leq m$ or $> m$. In this case we consider the word $w$ which begins with the first occurrence of $\sigma_1$ in $B$ and ends with the letter preceding the second occurrence of $\sigma_1$. Let $x = \len(w)$. By Lemma \ref{lemma:sigmaplow} there are three possibilities.
\begin{enumerate}
\item[(3.1.1)] If $w$ is equivalent to the word $w' = \sigma_1 \sigma_2 \dots \sigma_x$, then observe that $w'\sigma_1$ is equivalent to $\sigma_1 \sigma_2 \sigma_1 \sigma_3 \sigma_4 \dots \sigma_x$ by commuting the trailing $\sigma_1$ past all the letters excepts $\sigma_1 \sigma_2$. Now we can apply the braid relation $\sigma_1 \sigma_2 \sigma_1 = \sigma_2 \sigma_1 \sigma_2$ to get an equivalent word $\sigma_2 \sigma_1 \sigma_2 \sigma_3 \sigma_4 \dots \sigma_x$. Note that all of these alterations to $w$ can be symmetrically applied to the other half of $B$ so that $B$ is equivalent to an intravergent braid $B'$ containing only a single $\sigma_1$. This reduces us to case (2).

\item[(3.1.2)] If $w$ is equivalent to a word $w'$ which contains $\sigma_i^2$ for some $i$, then applying a crossing change will delete $\sigma_i^2$ from $w'$. We can apply these alterations symmetrically in $B$ as a type A move to obtain a positive intravergent braid $B'$ with length $\ell - 4$ whose closure is a knot $\widehat{B'}$. By induction, we have that $\widetilde{u}(\widehat{B'}) \leq \dfrac{\ell - 4 - s + 1}{2}$ and hence
\[
\widetilde{u}(\widehat{B}) \leq 2+ \widetilde{u}(\widehat{B'}) \leq \dfrac{\ell -s + 1}{2}.
\]
\item[(3.1.3)] If $w$ is equivalent to a word $w'$ which still contains a single $\sigma_1$ but does not begin with $\sigma_1$, then by induction on the number of letters between the two occurrences of $\sigma_1$ we can reduce to case (3.1.1) or (3.1.2).
\end{enumerate}
\item[(3.2)] The second case is that $\sigma_1$ appears once with index $\leq m$ and once with index $> m$. We begin by considering the word beginning at the first occurrence of $\sigma_1$ and ending at index $m$. This word contains no other instances of $\sigma_1$, so we may apply Lemma \ref{lemma:sigmaplow} and alter $B$ symmetrically in the indices $> m$. If the result contains the square of a generator, then we may proceed as in case (3.1.2). Otherwise, the result is an equivalent intravergent braid $B'$ which still contains a single $\sigma_1$ with index $\leq m$ and a single $\sigma_1$ with index $> m$, and for which the subword from the first $\sigma_1$ to the middle index $m$ is $w = \sigma_1 \sigma_2 \dots \sigma_{i}$. In particular $i = \len(w)$. Note that by the symmetry of $B'$, the length $i$ word which occurs directly after $w$ in $B'$ is $\overline{w} = \sigma_{2n-i+1}\sigma_{2n-i+2} \dots \sigma_{2n}$. We now consider 3 possibilities depending on $i$ and $n$.
\begin{enumerate}
\item[(3.2.1)] If $i < n$, then $w$ commutes with $\overline{w}$ so that we can alter $B'$ by replacing $w\overline{w}$ with $\overline{w}w$. Furthermore, it is not hard to see that this replacement is realized by a symmetric isotopy of $B'$. The result is an intravergent braid with two occurrences of $\sigma_1$, both of which occur with index $> m$. This reduces us back to case (3.1).

\item[(3.2.2)] If $i = n$, then $\overline{w} = \sigma_{n+1} \sigma_{n+2} \dots \sigma_{2n}$, and we consider the word $w_{RHS}$ beginning with the first letter after $\overline{w}$ and ending with the $\sigma_1$ which has index $>m$. We may then apply Lemma \ref{lemma:sigmaplow} until $w_{RHS}$ either contains the square of a positive generator, in which case we proceed as in case (3.1.2), or is the word $\sigma_j \sigma_{j-1} \dots \sigma_2 \sigma_1$ for some $j \geq 1$. In the latter situation we have 3 cases.

\begin{enumerate}
\item[(3.2.2.1)] If $j < n$, we first commute $w_{RHS}$ past $\overline{w}$, and then we are in a situation similar to case (3.2.1), but with the order of the braid reversed. Hence a symmetric argument applies to reduce to case (3.1). 
\item[(3.2.2.2)] If $j = n$, then we consider the leading $\sigma_{n}$ of $w_{RHS}$, which we may commute past every letter of $\overline{w}$ except $\sigma_{n+1}$. Symmetrically, we also commute a $\sigma_{n+1}$ past every letter of $w$ except the trailing $\sigma_n$. The result is an intravergent braid with the middle four letters $\sigma_{n+1}\sigma_n\sigma_{n+1}\sigma_{n}$. This is the situation shown on the top left in Figure \ref{fig:typeBbraidmove}. Applying a type B move then produces an intravergent braid $B''$ which has $\sigma_{n+1}\sigma_n\sigma_{n+1}\sigma_{n}$ replaced with $\sigma_n\sigma_{n+1}$ as shown on the top right in Figure \ref{fig:typeBbraidmove}. We then calculate
\[
 \widetilde{u}(\widehat{B}) = \widetilde{u}(\widehat{B'}) \leq 1+ \widetilde{u}(\widehat{B''}) \leq 1 + \dfrac{(\ell-2) -s + 1}{2} = \dfrac{\ell -s + 1}{2}.   
\]
\item[(3.2.2.3)] If $j > n$, we will use a reductive argument to reduce to case (3.2.2.2). We begin by looking at the first letter of $w_{RHS}$ and $\overline{w}$. We have $\overline{w}\sigma_j = \sigma_{n+1} \sigma_{n+2} \dots \sigma_{2n} \sigma_j$, and commuting $\sigma_j$ as far to the left as possible, we have $\sigma_{n+1} \sigma_{n+2} \dots \sigma_{j}\sigma_{j+1}\sigma_j \sigma_{j+2} \dots \sigma_{2n}$. We then apply the braid relation to $\sigma_j \sigma_{j+1} \sigma_j$ to obtain $\sigma_{n+1} \sigma_{n+2} \dots \sigma_{j+1}\sigma_{j}\sigma_{j+1} \sigma_{j+2} \dots \sigma_{2n}$. We now commute the leftmost $\sigma_{j+1}$ all the way to the left to obtain $\sigma_{j+1} \sigma_{n+1} \sigma_{n+2} \dots \sigma_{2n}$. Applying all of these changes symmetrically to $B'$ produces a new intravergent braid $B''$ whose symmetry exchanges $\sigma_{j+1}$ with the preceding letter, which must therefore be $\sigma_{2n-j}$. We can now symmetrically commute $\sigma_{j+1}$ with $\sigma_{2n-j}$ so that the right half of $B''$ begins $\sigma_{2n-j}\sigma_{n+1} \sigma_{n+2} \dots \sigma_{2n}$. Since $j > n$, we have that $2n-j < n$, and hence we may commute $\sigma_{2n-j}$ to the right of this word. In total we have now replaced $\overline{w}w_{RHS}$ with $\overline{w}\sigma_{2n-j} \sigma_{j-1} \sigma_{j-2} \dots \sigma_1$. We can now apply the argument in part (c) of the proof of Lemma \ref{lemma:sigmaplow} to the word $\sigma_{2n-j} \sigma_{j-1} \sigma_{j-2} \dots \sigma_1$ to get $\sigma_{j-1} \sigma_{j-2} \dots \sigma_1 \sigma_{2n-j+1}$. We may now repeat this entire process with $w_{RHS} = \sigma_{j-1} \sigma_{j-2} \dots \sigma_1$, that is we have reduced $j$ by 1. We continue until $j = n$, which is case (3.2.2.2).
\end{enumerate}

\item[(3.2.3)] If $i = n+1$, then the middle four letters of $B'$, consisting of the last two letters of $w$ and the first two letters of $\overline{w}$, are $\sigma_n \sigma_{n+1} \sigma_n \sigma_{n+1}$. In this case we can perform a type B move to $B'$, replacing these four letters with $\sigma_{n+1}\sigma_n$ to obtain a new intravergent braid $B''$ with $\len(B'') = \len(B') - 2$. See Figure \ref{fig:typeBbraidmove}. By the inductive assumption, we then calculate 
\[
\widetilde{u}(\widehat{B}) = \widetilde{u}(\widehat{B'}) \leq 1+ \widetilde{u}(\widehat{B''}) \leq 1 + \dfrac{(\ell-2) -s + 1}{2} = \dfrac{\ell -s + 1}{2}.
\]

\begin{figure}
\scalebox{.3}{\includegraphics{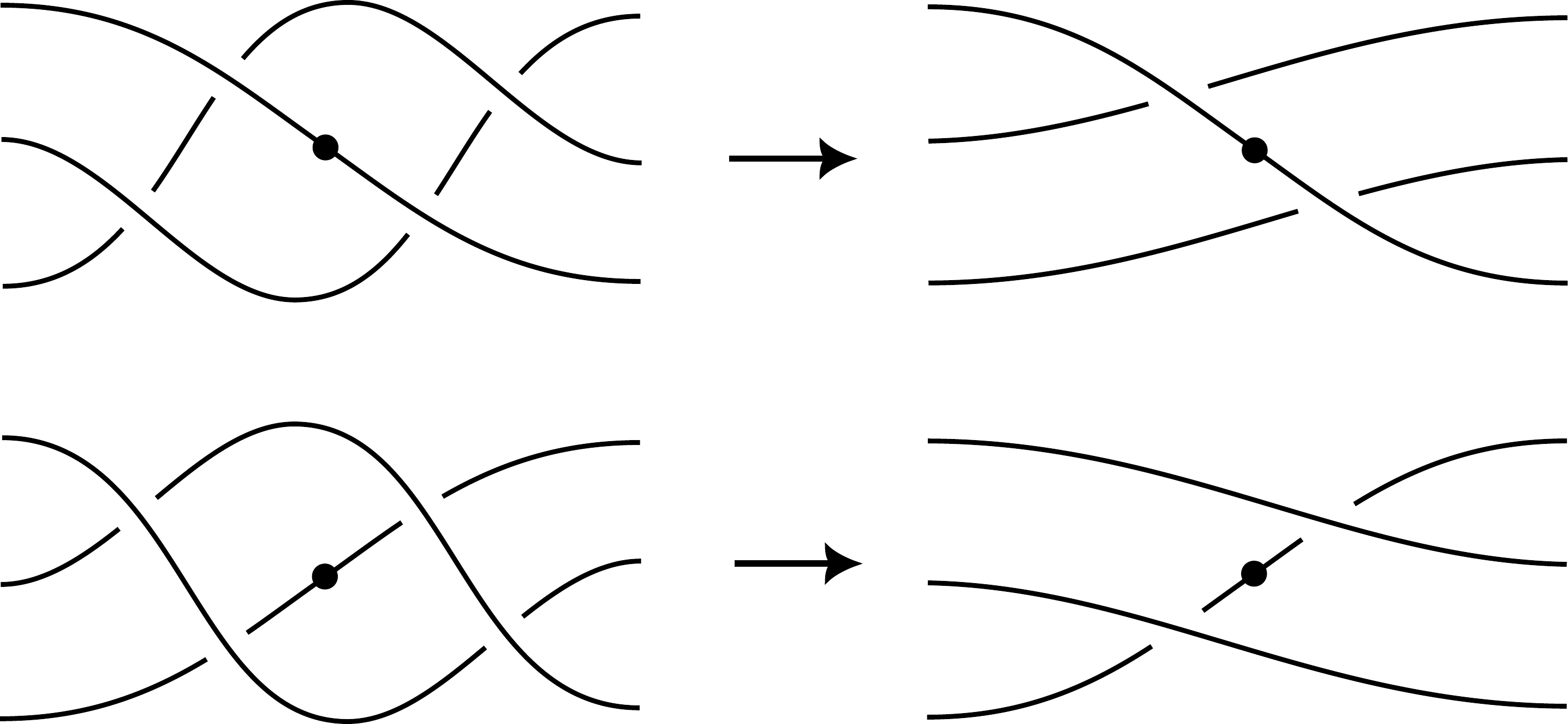}}
\caption{The middle three strands of two intravergent braids with $2n+1$ strands. On the top left is the braid corresponding to $\sigma_{n+1}\sigma_{n}\sigma_{n+1}\sigma_{n}$, which becomes $\sigma_n\sigma_{n+1}$ (top right) after performing a type B move. On the bottom left is the braid corresponding to $\sigma_{n}\sigma_{n+1}\sigma_{n}\sigma_{n+1}$, which becomes $\sigma_{n+1}\sigma_n$ (bottom right) after performing a type B move.}
\label{fig:typeBbraidmove}
\end{figure}

\item[(3.2.4)] If $i > n+1$, then consider the ending portion of $w$ consisting of $\sigma_{n+1} \sigma_{n+2} \dots \sigma_{i}$, and the subsequent letter, which by symmetry is $\sigma_{2n+1-i}$. Since $i > n+1$, we have that $\sigma_{2n+1-i}$ commutes past $\sigma_i$ symmetrically. In the original location of $w$, we now have the word $w' = \sigma_{1} \sigma_{2} \dots \sigma_{i-2}\sigma_{i-1} \sigma_{2n+1-i}$. Referring back to the proof of case (c) in Lemma \ref{lemma:sigmaplow}, we conclude that $w'$ is equivalent to the word $\sigma_{2n+2-i} \sigma_{1} \sigma_{2} \dots \sigma_{i-2}\sigma_{i-1}$. Finally, by induction on the length of $w$, we may repeat this argument until we reduce back to case (3.2.3).
\end{enumerate}
\end{enumerate}

Finally, returning to case (4), there must be two occurrences of $\sigma_1$ with index $\leq m$ or two occurrences of $\sigma_1$ with index $>m$ so that the proof is identical to case (3.1).
\end{proof}

\begin{proof}[Proof of Theorem \ref{thm:torusknots}]
First, note that torus knots are closures of positive intravergent braids. Indeed, for the torus knot $K = T(p,q)$ we can assume without loss of generality that $p$ is odd, and a positive intravergent braid with closure $K$ is given by the word
\[
\left( \prod_{i = 1}^{p-1} \sigma_i\right)^q
\]
on $p$ strands. Now by Proposition \ref{prop:weight} we have that $\widetilde{u}(K) \leq \dfrac{(p-1)q - p+1}{2} = \dfrac{(p-1)(q-1)}{2}$. Then since $u(K) \leq \widetilde{u}(K)$, and $u(K) = \dfrac{(p-1)(q-1)}{2}$ by \cite{MR1241873}, we have the desired equality $u(K) = \widetilde{u}(K)$.
\end{proof}
%\begin{theorem}
%	The equivariant unknotting number $\widetilde{u}(T_{p,q})$ of a $(p,q)$-torus knot is equal to $\frac{(p-1)(q-1)}{2}$.
%\end{theorem}
%\begin{proof}
%	We will deform the complex curve with isolated singularity that corresponds to a $(p,q)$ torus knot into curves with only ordinary double points by real perturbations.
%	References: Singularities of differentiable maps by Arnold etc. Theorem 4.4.
	
%	Other references: The french paper by A'campo Le grope Theorem 1, whose proof has a gap according to Morsification of real plane singularities.
%\end{proof}

\section{Type A unknotting}

We begin by restating and proving Theorem \ref{thm:typeAunknots} and Theorem \ref{thm:typeAbounds}.

\typeAunknots*

\begin{proof}
To begin, consider an intravergent diagram for a strongly invertible knot $K$, that is a symmetric diagram where the axis of symmetry is orthogonal to the plane of the diagram. Since $K$ is strongly invertible, there is a central crossing where the knot intersects the axis of symmetry. Cutting $K$ at this central crossing, we have two arcs $\gamma_1$ and $\gamma_2$. Note that changing any symmetric pair of crossings in this diagram constitutes a type A crossing change. With a sequence of such crossing changes we can ensure that $\gamma_1$ always passes over $\gamma_2$, and furthermore that $\gamma_1$ (and hence by symmetry $\gamma_2$) is an unknotted arc. Then an isotopy reduces the diagram to only the single central crossing, which represents the unknot. An example is shown in Figure \ref{fig:typeAunknotting}.
\end{proof}

\begin{figure}
\scalebox{.25}{\includegraphics{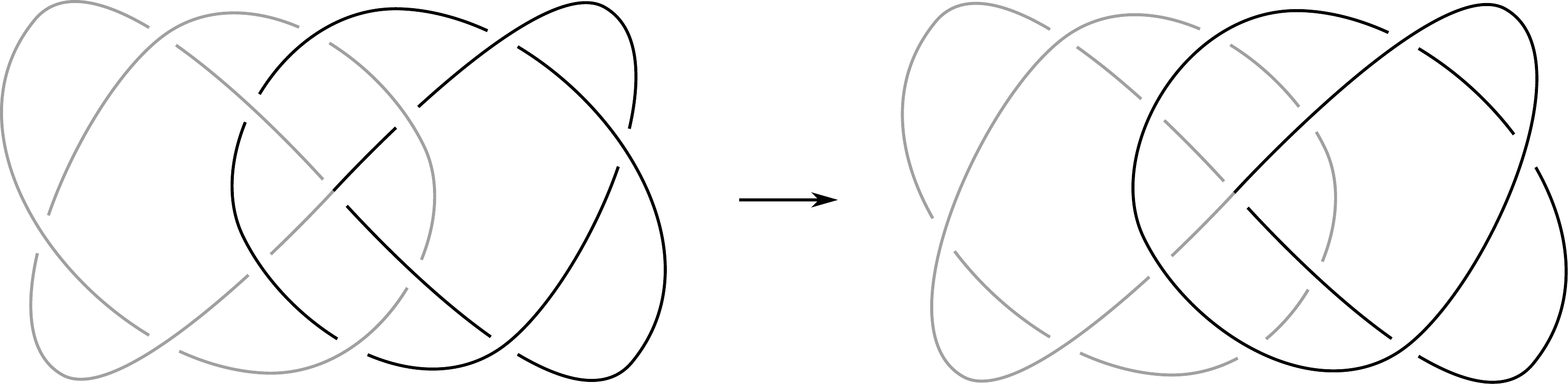}}
\caption{An example of unknotting a strongly invertible knot with only type A crossing changes, by ensuring that the black arc always passes over the gray arc, and that the black and gray arcs are unknotted.}
\label{fig:typeAunknotting}
\end{figure}

\typeAbounds*

\begin{proof}
Consider any type A crossing change pair, $\{c,\rho(c)\}$. Taking the quotient, the effect is a crossing change on $\mathfrak{q}_1(K)$ and a crossing change on $\mathfrak{q}_2(K)$. Thus any type A unknotting sequence for $K$ induces an unknotting sequence of the same length for both $\mathfrak{q}_1(K)$ and $\mathfrak{q}_2(K)$.
\end{proof}

As a consequence of this theorem, we can immediately see that the type A unknotting number can be unbounded, even for knots with a fixed unknotting number.

\begin{corollary} \label{cor:typeAquot}
For any positive integer $n$, there is a strongly invertible knot $K_n$ such that $u(K_n) = 1$, but $\widetilde{u}_A(K_n) \geq n$. In particular the difference $\widetilde{u}_A(K_n) - u(K_n)$ is unbounded.
\end{corollary}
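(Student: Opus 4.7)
The plan is to exhibit an explicit family $K_n$ of strongly invertible knots with $u(K_n)=1$ whose quotient knots have arbitrarily large classical unknotting number, and then invoke Theorem \ref{thm:typeAbounds}. The natural choice is the family of twist knots with their standard strong inversion (the axis passing through the two clasp crossings and the twist region), as already alluded to in the remark on twist knots in Subsection \ref{subsec:lb}.

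First I would verify $u(K_n) = 1$. This is classical: a single crossing change at one of the two clasp crossings unknots a twist knot, and since $K_n$ is non-trivial we cannot do better than one crossing change. Second, I would identify the quotient knots $\mathfrak{q}_1(K_n)$ and $\mathfrak{q}_2(K_n)$ from Definition \ref{def:quotients}. In an equivariant diagram for $K_n$ the axis of symmetry meets $K_n$ in two points and decomposes into two complementary arcs $h_1, h_2$, so the quotient theta graph has a concrete pictorial description. Inspecting the standard picture (as referenced in the remark on twist knots), one of the two choices of half-axis yields the torus knot $\mathfrak{q}_1(K_n) = T(2,2n+1)$, while the other yields $\mathfrak{q}_2(K_n) = $ unknot. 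This identification is the key geometric input.

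Third, I would combine these observations with Theorem \ref{thm:typeAbounds} and the classical computation $u(T(2,2n+1)) = n$ (Milnor's conjecture, proved in \cite{MR1241873}) to conclude
\[
\widetilde{u}_A(K_n) \;\geq\; \max\bigl(u(\mathfrak{q}_1(K_n)),\, u(\mathfrak{q}_2(K_n))\bigr) \;=\; \max(n,0) \;=\; n,
\]
while $u(K_n) = 1$. Hence $\widetilde{u}_A(K_n) - u(K_n) \geq n-1$, which is unbounded in $n$, giving the corollary.

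The only non-routine step is the identification of the quotient $\mathfrak{q}_1(K_n)$ as $T(2,2n+1)$; everything else follows from cited results and Theorem \ref{thm:typeAbounds}. Since the quotient theta graph is read off directly from the equivariant diagram of a twist knot, this verification is diagrammatic and essentially matches the computation indicated in the remark following the statement of the corollary, so I would present it via an annotated diagram rather than by a long calculation.
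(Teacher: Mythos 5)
Your proposal is correct and follows essentially the same route as the paper: take the twist knots $K_n$ of Figure \ref{fig:twistknots}, observe $u(K_n)=1$ via a single off-axis clasp crossing change, identify $\mathfrak{q}_1(K_n)=T(2,2n+1)$ with $u(T(2,2n+1))=n$ by Kronheimer--Mrowka, and apply Theorem \ref{thm:typeAbounds}. No gaps.
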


\begin{proof}
Consider the twist knot $K_n$ shown in Figure \ref{fig:twistknots}. By changing one of the bottom off-axis crossings we can easily see that $u(K_n) = 1$. However, the quotient knot $\mathfrak{q_1}(K_n)$ is $T(2,2n+1)$, and by Kronheimer and Mrowka's proof of the Milnor conjecture \cite[Corollary 1.3]{MR1241873} we have $u(T(2,2n+1)) = n$. By Theorem \ref{thm:typeAbounds} we conclude that $\widetilde{u}_A(K_n) \geq n$.
\end{proof}

\begin{figure}
\begin{overpic}[width=200pt, grid=false]{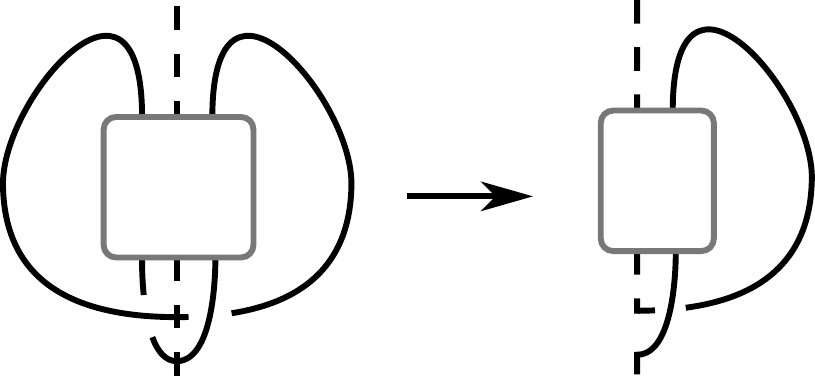}
\put (20, 22.5) {\large$n$}
\put (78.7, 22.5) {\large$n$}
\end{overpic}
\caption{A strongly invertible twist knot $K_n$ (left) and the quotient $\mathfrak{q}_1(K_n) = T(2,2n+1)$ (right) corresponding to the unbounded arc of the axis. The $n$ indicates $n$ full twists so that $K_n$ is alternating with $2n+2$ crossings.}
\label{fig:twistknots}
\end{figure}

\section{Type B unknotting}
\label{sec:B}

We begin by discussing 4-moves, which, as we shall see, are closely related to type B moves for strongly invertible knots.

\begin{definition}
A \emph{4-move} on a knot $K$ is a local tangle replacement as shown in Figure \ref{fig:4move}. The minimum number of 4-moves necessary to unknot a knot $K$ is the 4-move unknotting number $u_4(K)$. If $K$ cannot be unknotted with 4-moves, then we say that $u_4(K) = \infty$.
\end{definition}

\begin{figure}
\scalebox{.35}{\includegraphics{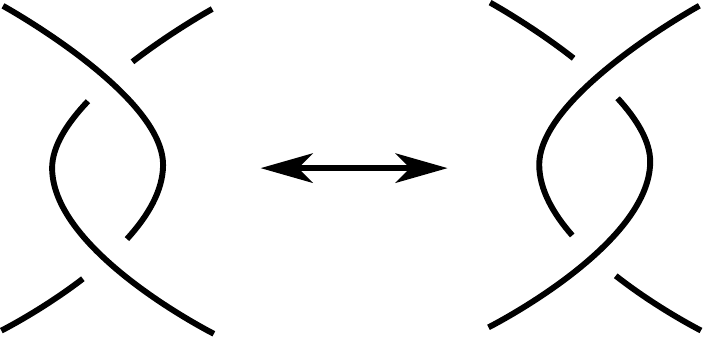}}
\caption{The 4-move as a local tangle replacement.}
\label{fig:4move}
\end{figure}

It is an old conjecture \cite[Conjecture B]{MR0881755} (which appears in the Kirby problem list \cite[1.59(3)(a)]{kirbylist}) that every knot can be unknotted with 4-moves. This conjecture has been verified through 12 crossings in \cite{MR3544442}, which also contains a history of the problem. 

\begin{lemma} \label{lemma:4moves}
Let $K$ be a strongly invertible knot with quotients $\mathfrak{q}_1(K)$ and $\mathfrak{q}_2(K)$. Applying a type B move to $K$ has the effect of applying a 4-move to one of $\mathfrak{q}_1(K)$ or $\mathfrak{q}_2(K)$, but has no effect on the other quotient knot.
\end{lemma}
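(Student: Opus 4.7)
The plan is to analyze the local effect of a type B move near the self-intersection by passing to the branched double cover, and to read off the change on each quotient knot.

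Set up local coordinates so that the axis of symmetry is the $u$-axis and $\rho(u,v,w) = (u,-v,-w)$, with the type B self-intersection at the origin. Model the two local strands of $K$ at the self-intersection as $\alpha$ at height $w = z_1 > 0$ and $\beta = \rho(\alpha)$ at $w = -z_1$, meeting in a crossing in the $(u,v)$-projection. Choose a small $\rho$-invariant round ball $B$ containing the crossing, so that the type B move is supported inside $B$. In these coordinates, the quotient $S^3 \to S^3/\rho \cong S^3$ restricts to the branched double cover $(u,v,w) \mapsto (u, V, W)$ with $V + iW = (v+iw)^2$, branched along the $u$-axis. The ball $B$ maps to a ball $B' \subset S^3/\rho$; the axis segment inside $B$ maps to an arc $\tau$ of the quotient theta graph; and $\alpha$ and $\beta$, exchanged by $\rho$, are identified to a single arc $\overline{\alpha} \subset B'$ which is part of $\overline{K}$. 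Without loss of generality $\tau \subset h_1$ (otherwise swap the roles of the two quotients).

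Inside $B'$ we therefore have $\mathfrak{q}_1(K) \cap B' = \overline{\alpha} \cup \tau$, a $2$-string tangle, and $\mathfrak{q}_2(K) \cap B' = \overline{\alpha}$, a $1$-string tangle (since $h_2$ is disjoint from $B'$). The type B move modifies $\overline{\alpha}$ inside $B'$ while fixing the boundary data on $\partial B'$. Since any two properly embedded arcs in a $3$-ball with the same boundary are isotopic rel boundary, the move induces an isotopy of $\mathfrak{q}_2(K)$ inside $B'$ that does not meet $h_2$, so $\mathfrak{q}_2(K)$ is unchanged as a knot.

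For $\mathfrak{q}_1(K)$, parameterize the move explicitly: before the move, $\alpha(u) = (u, u, z_1)$; after, $\alpha(u) = (u, u, z_1 + f(u))$ for a bump function $f$ supported in $(-u_0, u_0)$ with $f(0) = -2z_1$, so that over/under at the crossing swaps. Projecting through the quotient yields $\overline{\alpha}_{\mathrm{before}}(u) = (u,\, u^2 - z_1^2,\, 2uz_1)$ and $\overline{\alpha}_{\mathrm{after}}(u) = (u,\, u^2 - g(u)^2,\, 2ug(u))$ with $g = z_1 + f$, while $\tau$ remains the $u$-axis in $B'$. The key observation is that $V + iW = (u + ig(u))^2$, so the winding of $(V(u), W(u))$ about the origin as $u$ runs through $[-u_0, u_0]$ is exactly twice the winding of $(u, g(u))$ about the origin in the $(u,g)$-plane. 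A direct computation gives winding $-1$ before the move and $+1$ after, so $\overline{\alpha}$ winds around $\tau$ with $\pm 2$ more full turns after the move than before. Both local tangles are pure $2$-braids and hence rational, so a difference of $\pm 2$ full twists is precisely insertion of $\sigma^{\pm 4}$, that is a $4$-move on $\mathfrak{q}_1(K)$. The only delicate step is promoting this winding differential to the precise tangle identification $\sigma^{\pm 4}$; this can be done either by explicitly isotoping both tangles into standard $\sigma^n$ form, or conceptually by the principle that the branched double cover doubles twists around the branch locus, so that a single equivariant crossing change upstairs corresponds to a $\sigma^{\pm 4}$ twist downstairs.
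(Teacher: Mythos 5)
Your proof is correct in substance but takes a genuinely different route from the paper. The paper's argument is purely diagrammatic: it observes that a type B crossing change is supported at a fixed point of the axis, draws the local picture of the two symmetric strands crossing there (its Figure \ref{fig:typeBto4move}), and reads off from the quotient diagram that the half-axis arc $h_i$ passing through the ball picks up the four half-twists of a 4-move while the other half-axis is disjoint from the ball and hence unaffected. You instead work in the branched double cover explicitly, writing the quotient map as $(u,v,w)\mapsto(u,V,W)$ with $V+iW=(v+iw)^2$ and computing the winding of the image arc $\overline{\alpha}$ about the branch locus $\tau$ before and after the move; the doubling of angles under squaring turns the single equivariant crossing change upstairs into a change of two full twists, i.e.\ $\sigma^{\pm 4}$, downstairs. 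Your approach buys a rigorous, coordinate-level verification of exactly what the paper's figure asserts (in particular it makes precise \emph{why} the quotient picks up four, rather than two, half-twists: the branched cover doubles winding around the branch locus), at the cost of length; the paper's approach is essentially a picture, which is shorter but leaves the twist-counting to the reader's inspection of the figure.

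One step needs repair: you justify that $\mathfrak{q}_2(K)$ is unchanged by asserting that any two properly embedded arcs in a $3$-ball with the same boundary are isotopic rel boundary. That is false in general (a properly embedded arc in a ball can be knotted, and knotted and unknotted arcs with the same endpoints are not isotopic rel boundary). What saves you here is that both the before- and after-arcs $\overline{\alpha}$ are explicitly graphs over the $u$-coordinate in your local model, hence visibly boundary-parallel, and two \emph{trivial} (boundary-parallel) arcs with the same endpoints are isotopic rel boundary; alternatively, one can simply note as the paper does that $h_2$ is disjoint from the ball $B'$ and the modification of $\overline{K}$ inside $B'$ is realized by an ambient isotopy of $S^3/\rho$ supported in $B'$ and missing $h_2$. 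With that sentence corrected, the argument is complete.
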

\begin{proof}
Consider a type B move on $K$, and note that the crossing change must occur at a fixed point of the symmetry so that we have the situation shown in Figure \ref{fig:typeBto4move}. We then have two cases. If the type B crossing change occurs on $h_1$, then the dotted axis becomes an arc of $\mathfrak{q}_1(K)$ so that we have a 4-move on $\mathfrak{q}_1(K)$ but there is no effect on $\mathfrak{q}_2(K)$. If the type B crossing change occurs on $h_2$, then the dotted axis becomes an arc of $\mathfrak{q}_2(K)$, so that we have a 4-move on $\mathfrak{q}_2(K)$ but there is no effect on $\mathfrak{q}_1(K)$.
\end{proof}

\begin{figure}
\scalebox{.35}{\includegraphics{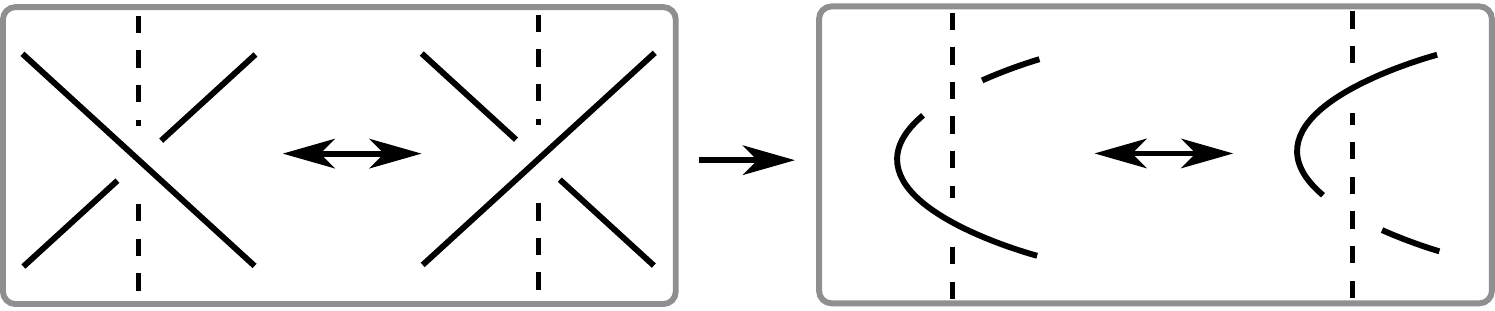}}
\caption{A type B crossing change on a strongly invertible knot $K$ (left) becomes a 4-move on $\mathfrak{q}_1(K)$ (right) when the shown section of the axis of symmetry is $h_1$.}
\label{fig:typeBto4move}
\end{figure}

\typeBbounds*
\begin{proof}
Any length $n$ unknotting sequence of type B moves on $K$ produces 4-move unknotting sequences for both $\mathfrak{q}_1(K)$ and $\mathfrak{q}_2(K)$ whose lengths sum to $n$, by Lemma \ref{lemma:4moves}.
\end{proof}

\begin{corollary} \label{cor:typeB4move}
If for every strongly invertible knot $K$ we have $\widetilde{u}_B(K) < \infty$, then every knot can be unknotted with 4-moves. 
\end{corollary}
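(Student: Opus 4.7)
The plan is to show that every knot $J \subset S^3$ arises as a quotient knot $\mathfrak{q}_1(K)$ of some strongly invertible knot $K$. Once this is established, the conclusion will follow immediately: the hypothesis of the corollary gives $\widetilde{u}_B(K) < \infty$ for this $K$, and then Theorem \ref{thm:typeBbounds} yields
\[
u_4(J) \;=\; u_4(\mathfrak{q}_1(K)) \;\leq\; u_4(\mathfrak{q}_1(K)) + u_4(\mathfrak{q}_2(K)) \;\leq\; \widetilde{u}_B(K) \;<\; \infty.
\]

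To produce such a $K$ from an arbitrary knot $J$, I would pick two distinct points $p, q \in J$ splitting $J$ into two arcs, which I label $\overline{K}$ and $h_1$. Next, choose an arc $h_2$ from $p$ to $q$ inside a small tubular neighborhood of $h_1$ so that $h_1 \cup h_2$ bounds an embedded disk in $S^3$; in particular $h_1 \cup h_2$ is an unknot, and $\Theta := \overline{K} \cup h_1 \cup h_2$ is a theta graph with $\overline{K} \cup h_1 = J$. The double branched cover of $S^3$ along the unknot $h_1 \cup h_2$ is again $S^3$, equipped with an involution whose fixed axis is the preimage of $h_1 \cup h_2$. The preimage $K$ of $\overline{K}$ under this cover is then a strongly invertible knot, and by construction $\mathfrak{q}_1(K) = \overline{K} \cup h_1 = J$. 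Applying the chain of inequalities above completes the argument, and since $J$ was arbitrary, every knot can be unknotted with 4-moves.

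The only step requiring a moment of care is checking that $K$ is a knot rather than a two-component link, but this is automatic: $\overline{K}$ is an arc with both endpoints on the branching locus, so its preimage is the branched double cover of an interval over its two boundary points, which is a single circle. No substantive obstacle is expected; this corollary is essentially a direct translation through Theorem \ref{thm:typeBbounds} once the right strongly invertible knot has been built over $J$.
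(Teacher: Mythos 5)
Your argument is correct, and its logical core is the same as the paper's: realize the given knot $J$ as a quotient knot of some strongly invertible knot $K$, then apply Theorem \ref{thm:typeBbounds} to conclude $u_4(J) \leq \widetilde{u}_B(K) < \infty$. The only difference is how the witness $K$ is produced. The paper simply writes it down: $K = J \# rJ$ with the strong inversion that swaps the two summands, for which $\mathfrak{q}_1(K) = \mathfrak{q}_2(K) = J$ is immediate. You instead build a theta graph $\overline{K} \cup h_1 \cup h_2$ with $h_1 \cup h_2$ unknotted and take the double branched cover over that unknot; this is a valid and somewhat more flexible construction (it shows every knot is a quotient of \emph{some} strongly invertible knot, for any admissible choice of $h_2$), and your check that the preimage of $\overline{K}$ is a single circle on which the deck transformation acts with two fixed points is the right verification. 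Note, though, that with your specific choice of $h_2$ as a parallel pushoff of $h_1$, the branched cover you construct is exactly $J \# rJ$ with the swap symmetry, so the two proofs produce essentially the same knot; the paper's phrasing just avoids invoking the branched covering machinery.
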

\begin{proof}
Let $K'$ be a knot, and consider the strongly invertible knot $K = K' \# rK'$ where the strong inversion exchanges the two factors. Note that $\mathfrak{q}_1(K) = \mathfrak{q}_2(K) = K'$. Then since $\widetilde{u}_B(K) < \infty$, we have that $u_4(K') + u_4(K') < \infty$ by Theorem \ref{thm:typeBbounds}.
\end{proof}

The following theorem allows us to obstruct 2-bridge knots from having $u_4(K) = 1$. This theorem appears as Theorem 1.2 in \cite{MR4475491}. For completeness, we provide a proof below.

\begin{theorem} [Theorem 1.2 in \cite{MR4475491}] \label{thm:2bridge4moves}
Let $K$ be a 2-bridge knot corresponding to the fraction $p/q$. Then $u_4(K) = 1$ if and only if there are integers $r$ and $s$ such that
\begin{enumerate}
\item $\gcd(r,s) = 1$,
\item $4rs = \pm p \pm 1$, and
\item $\pm q^{\pm 1} \equiv 4s^2$ \textup{mod} $p$.
\end{enumerate}
\end{theorem}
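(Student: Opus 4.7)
The plan is to translate $4$-moves on the $2$-bridge knot $K = K_{p/q}$ into Dehn surgeries on its double branched cover $\Sigma_2(K) = L(p,q)$ via the Montesinos trick, and then extract conditions (1)--(3) from the classification of lens spaces. A $4$-move takes place in a $3$-ball meeting $K$ in a rational $0$-tangle and replaces this with a $2$-tangle (four half-twists). Taking branched double covers, the $3$-ball lifts to a solid torus $V \subset L(p,q)$, and the $4$-move becomes a Dehn surgery on the core $\tilde{c}$ of $V$; the two tangle slopes $0$ and $2$ translate, via the standard tangle-to-surgery dictionary, into a surgery slope of $\pm\tfrac{1}{2}$ with respect to the framing induced by the disk of the $4$-move. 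Thus $u_4(K_{p/q}) = 1$ if and only if $L(p,q)$ admits a knot $\tilde{c}$, arising as the lift of the boundary of an unknotted $4$-move disk, such that $\pm\tfrac{1}{2}$ surgery on $\tilde{c}$ yields $S^3$.

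The next step is to identify which knots of $L(p,q)$ arise as such lifts. The $4$-move disk is an unknotted disk in $S^3$ meeting $K$ in four points, whose boundary is an unknot invariant under the strong inversion of $K_{p/q}$. In the $2$-bridge setting any such disk can be isotoped into standard position with respect to a bridge sphere, and its boundary then lifts to a simple closed curve on the genus-$1$ Heegaard torus of $L(p,q)$. Thus $\tilde{c}$ is an $(r,s)$-curve on this Heegaard torus, with $\gcd(r,s)=1$, and $s$ represents the homology class $[\tilde{c}] \in H_1(L(p,q)) = \mathbb{Z}/p$.

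Having reduced to this picture, I would carry out the surgery computation on an $(r,s)$-curve on the Heegaard torus. Since the surgery curve lies on the Heegaard torus, the surgery modifies only one gluing map, producing another lens space $L(p',q')$ whose parameters are linear-algebraic functions of $(p,q,r,s)$. A direct $H_1$ computation shows $p' = |4rs \mp 1|$, so demanding $p' = 1$ (i.e., that the surgered manifold is $S^3$) gives condition (2): $4rs = \pm p \pm 1$. Reading off the $q'$-parameter of the surgered lens space and imposing $L(p',q') \cong S^3$ via the lens space classification $L(p,q) \cong L(p,q')$ iff $q \equiv \pm(q')^{\pm 1} \pmod{p}$ yields condition (3): $\pm q^{\pm 1} \equiv 4s^2 \pmod{p}$. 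Coprimality of $r,s$ is simply condition (1). The converse direction reverses the construction: given $(r,s)$ satisfying (1)--(3), form the $(r,s)$-curve on the Heegaard torus of $L(p,q)$, observe that $\pm\tfrac{1}{2}$ surgery returns $S^3$, and quotient by the branching involution to recover an explicit $4$-move on $K_{p/q}$ unknotting it.

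The main obstacle is the careful bookkeeping of framings and signs: the four-fold ambiguity in conditions (2) and (3) corresponds to the choice of which solid torus of the Heegaard splitting contains $\tilde{c}$, the orientation of $\tilde{c}$, and the sign of the surgery slope, and one must check that the numerical conditions in the theorem are invariant under exactly these choices. A secondary obstacle is justifying that the knots in $L(p,q)$ relevant to the problem are precisely the simple knots on the Heegaard torus---this follows from the fact that the $4$-move axis in $S^3$ is an unknotted curve intersecting $K$ in a controlled way and invariant under the strong inversion, which forces its lift to lie on a genus-$1$ Heegaard surface of $L(p,q)$.
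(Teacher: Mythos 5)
There is a genuine gap in the forward direction of your argument, at precisely the step that carries all the difficulty of the theorem. You assert that the lift $\tilde{c}$ of the boundary of the $4$-move disk can be taken to be an $(r,s)$-curve on the genus-one Heegaard torus of $L(p,q)$, justified by "isotoping the disk into standard position with respect to a bridge sphere." This is not justifiable: the $3$-ball in which the $4$-move takes place meets $K$ in two trivial arcs locally, but the ball itself may be embedded in an arbitrarily complicated way relative to $K$, so its lift to $\Sigma_2(K)$ is an arbitrary knot in $L(p,q)$ with no a priori relation to the Heegaard surface. Identifying which knots can actually occur is the heart of the proof, and it cannot be done by an isotopy argument. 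The paper works in the dual picture --- the Montesinos trick exhibits $L(p,q)$ as $D/4$ surgery on some knot $K' \subset S^3$ --- and then invokes the Cyclic Surgery Theorem of Culler--Gordon--Luecke--Shalen: since the surgery coefficient $D/4$ is non-integral and the result has cyclic fundamental group, the exterior of $K'$ must be Seifert fibered, whence $\pi_1$ of the exterior has nontrivial center and $K'$ is a torus knot $T(r,s)$ by Burde--Zieschang. Only then does Moser's classification of torus knot surgeries produce the equations $4rs = \pm p \pm 1$ and $\pm q^{\pm 1} \equiv 4s^2 \pmod p$. Your proposal contains no substitute for this input, so the forward implication is unproven as written.

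Two secondary issues. First, your surgery slope is off: a $4$-move replaces the $0$-tangle by four half-twists, which under the Montesinos dictionary is a distance-$4$ tangle replacement and hence a surgery of denominator $4$ (the paper's $D/4$), not $\pm\tfrac{1}{2}$; with denominator $2$ Moser's formula would yield $2rs \pm 1$ rather than the $4rs \pm 1$ in condition (2). Second, in the converse direction you need to identify the quotient of $L(D,4s^2)$ by the lifted strong inversion of $T(r,s)$ as the specific $2$-bridge knot $K_{p/q}$; the paper does this by citing Hodgson--Rubinstein's theorem that a lens space is the double branched cover of a unique link, which is again an external input your sketch omits.
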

\begin{proof}
For the forward direction, suppose that $u_4(K) = 1$. By an analog of the Montesinos trick, the 2-fold branched cover $\Sigma(K)$ of $S^3$ over $K$ can be obtained by $D/4$ surgery on some knot $K'$ in $S^3$. However, since $\Sigma(K) = L(p,q)$ has a cyclic fundamental group, the cyclic surgery theorem \cite{MR0881270} tells us that the exterior of $K'$ is Seifert fibered from which we can see that $\pi_1(K')$ has a non-trivial center. Then by \cite{MR0210113}, $K'$ must be a torus knot $T(r,s)$. Since $K'$ is a knot and not a link we immediately have (1). Now by Moser's theorem classifying torus knot surgeries \cite{MR0383406}, a $D/4$ surgery on a torus knot which produces a lens space must be of the form $S^3_{D/4}(T(r,s)) = L(D,4s^2)$, where $|4rs + D| = 1$. Since we in fact obtain $L(p,q)$, we have $D = \pm p$ and $4rs = \pm p \pm 1$ so that (2) is satisfied. Finally, by the classification of lens spaces, we have that $L(p,q)$ is orientation-preserving homeomorphic to one of $L(\pm p,4s^2)$ if and only if condition (3) is satisfied.

For the reverse direction, consider the the torus knot $T(r,s)$, and let $D = 4rs \pm 1$ such that $|D| = |p|$. Now $T(r,s)$ has a unique strong inversion $\rho$ which induces a symmetry which we again call $\rho$ on $S^3_{D/4}(T(r,s)) = L(D,4s^2)$. Now by \cite[Corollary 4.12]{MR0823282}, each lens space is the double branched cover of a unique link (specifically a 2-bridge link) so that the quotient of $L(D,4s^2)$ by $\rho$ must be the 2-bridge knot with fraction $p/q$. We now compare the images of the fixed-point sets in $S^3/\rho$ and $L(D,4s^2)/\rho$. In $S^3/\rho$ we have a trivial tangle consisting of two arcs in a 3-ball, which is the quotient of a neighborhood of $T(r,s)$. In $L(D,4s^2)/\rho$, the tangle becomes twisted by the $D/4$ surgery to give us 4 half twists; that is a 4-move.
\end{proof}

Before discussing applications to $\widetilde{u}_B$, we give an example of a direct application of Theorem \ref{thm:2bridge4moves} to the figure-eight knot.
\begin{example} \label{ex:4move4_1}
Consider the knot $K = 4_1$, which is the 2-bridge knot corresponding to the fraction $5/2$. We then consider the possible values of $s$ in Theorem \ref{thm:2bridge4moves}. Since $4s$ is a factor of $\pm p \pm 1 = \{-6,-4,4,6\}$, we have that $|s| = 1$. However, $\pm 2^{\pm 1} = \{2,3\}$ mod 5, so that $\pm q^{\pm 1} \not\equiv 4s^2$ mod 5. We conclude that $u_4(K) \neq 1$. On the other hand it is not hard to see that $u_4(K) \leq 2$ by directly performing two 4-moves on $K$; see Figure \ref{fig:figureeight4move}.
\end{example}

\begin{figure}
\scalebox{.3}{\includegraphics{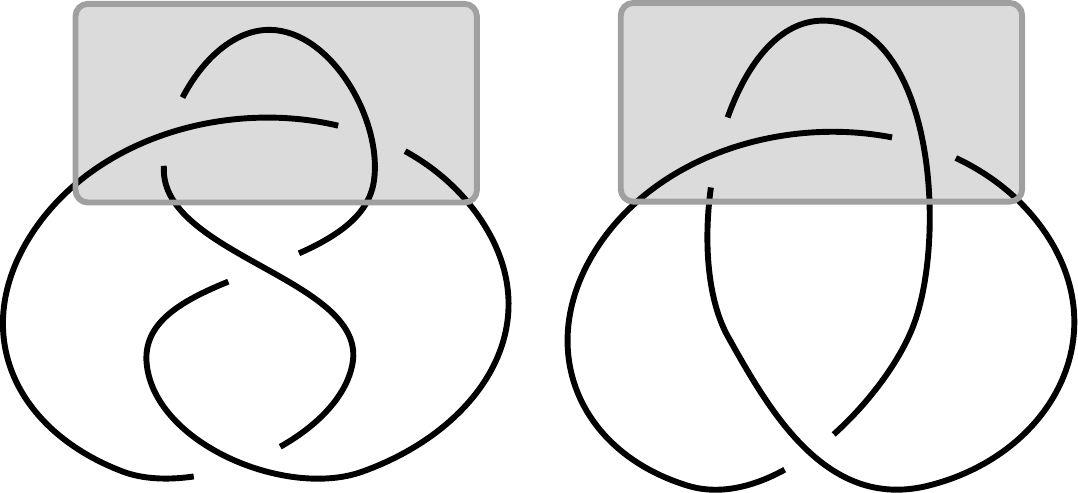}}
\caption{Performing a 4-move on the figure-eight knot (left) by changing the crossings in the clasp in the shaded box produces the trefoil. After an isotopy we get the diagram shown on the right, and performing the indicated 4-move produces the unknot.}
\label{fig:figureeight4move}
\end{figure}

By combining Theorem \ref{thm:2bridge4moves} and Theorem \ref{thm:typeBbounds} we find a knot $K$ where $\widetilde{u}_B(K) - u(K) \geq 2$.

\begin{example} \label{ex:connectsumof41s}
Consider the knot $K = 4_1\#4_1$ with the strong inversion which exchanges the two components. Then $\mathfrak{q}_1(K) = \mathfrak{q}_2(K) = 4_1$, and by Theorem \ref{thm:2bridge4moves} (see Example \ref{ex:4move4_1}) we have that $u_4(4_1) = 2$. Theorem \ref{thm:typeBbounds} then says that $\widetilde{u}_B(K) \geq 2 + 2 = 4$, in contrast with the usual unknotting number of $K$, which is 2. 
\end{example}

\section{Type C unknotting} \label{sec:C}
We start by proving Theorem \ref{thm:typeCbounds}.
\begin{definition}
The non-orientable band unknotting number $u_{nb}(K)$ of a knot $K$ is the minimum number of non-orientable band moves needed to transform $K$ into the unknot. 
\end{definition}

\typeCbounds*

\begin{proof}
We claim that a type C crossing change on $K$ descends to a non-orientable band move on the quotient theta graph which restricts to a non-orientable band move on one of $\mathfrak{q}_1(K)$ and $\mathfrak{q}_2(K)$, without affecting the other. Note that when $K$ is the unknot both $\mathfrak{q}_1(K)$ and $\mathfrak{q}_2(K)$ are  unknots, and so the theorem will follow by induction on $\widetilde{u}_C(K)$.

We now prove the claim. Consider a type C crossing change, as shown on the left in Figure \ref{fig:typeCquotient}. We have chosen a diagram so that the isotopy corresponding to the type C crossing change is compact; considering a type C move which passes through infinity in the indicated diagram would have the effect of exchanging $\mathfrak{q}_1(K)$ and $h_1$ with $\mathfrak{q}_2(K)$ and $h_2$ in what follows. The quotient theta graph admits a diagram as in the center of Figure \ref{fig:typeCquotient}. Then the indicated band move restricts to an R1 move on the induced diagram of $\mathfrak{q}_1(K)$, so that the knot type of $\mathfrak{q}_1(K)$ remains unchanged, and restricts to a non-orientable band move on $\mathfrak{q}_2(K)$. 

\begin{figure}
\begin{overpic}[width=230pt, grid=false]{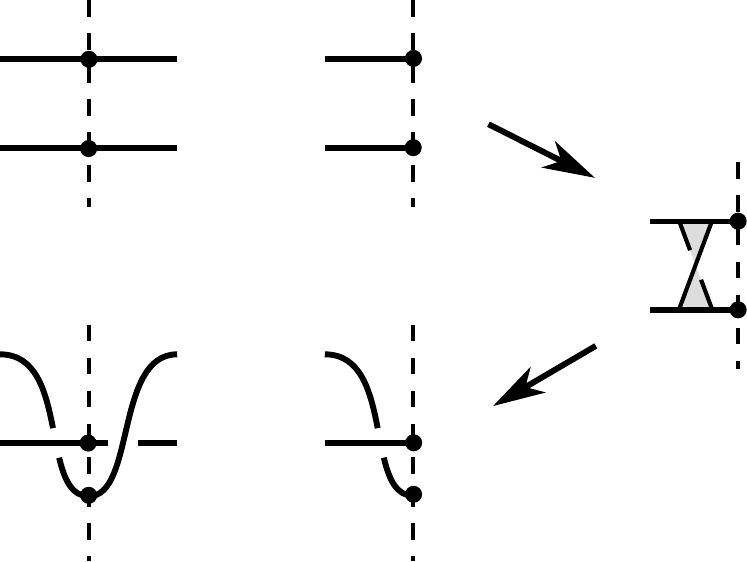}
\put (101, 38) {$h_1$}
\put (58, 60) {$h_1$}
\put (101, 50) {$h_2$}
\put (58, 72) {$h_2$}
\end{overpic}
\caption{The effect of a type C crossing change (left) on the quotient theta graph (center) of a strongly invertible knot is a non-orientable band move (right). The dashed line indicates the axis of symmetry, and the solid line indicates the knot.}
\label{fig:typeCquotient}
\end{figure}
\end{proof}

\begin{remark}
There are several straightforward lower bounds on $u_{nb}(K)$, such as the (not necessarily orientable) band-unlinking number and the non-orientable 4-genus, and lower bounds on both of these have appeared in the literature. For example, a lower bound for the band-unlinking number is given in \cite[Theorem 4]{MR1075165} in terms of the homology groups of cyclic branched coverings of $K$, and lower bounds on the non-orientable 4-genus can be found in \cite{MR3272020}, \cite{MR3694597}, and \cite{MR4640136}. Many of these lower bounds rely on knot Floer homology, but the invariants are computable in our situation. Indeed, it follow from Theorem \ref{thm:1_2typeC} that the quotients of type C unknottable knots are (1,1)-knots.

%Another example is the band-unlinking number, since a sequence of non-orientable band moves is in particular a sequence of band moves. One lower bound on the band-unlinking number comes from the torsion order of the knot Floer homology \cite{MR4186142}. ***This is not correct, the band-unlinking number is required to be oriented band moves***
\end{remark}

\subsection{Type C unknottable knots}
In this section we classify which strongly invertible knots can be unknotted with a sequence of type C moves. Our main result is Theorem \ref{thm:1_2typeC}, which we recall here for convenience.

\typeC*

\begin{corollary} \label{cor:tunnel}
Let $K$ be a strongly invertible knot which can be equivariantly unknotted with type C moves and let $t(K)$ be the tunnel number of $K$. Then $t(K) \leq 2$.
\end{corollary}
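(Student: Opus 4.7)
The plan is to invoke Theorem \ref{thm:1_2typeC} to reduce to a $(1,2)$-decomposition of $(S^3, K)$ and then extract a genus $3$ Heegaard splitting of the knot exterior $E(K) = S^3 \setminus \mathrm{int}(N(K))$. Since the Heegaard genus of the exterior satisfies $g(E(K)) = t(K) + 1$ for a knot $K \subset S^3$, producing such a splitting will immediately give $t(K) \leq 2$.

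First, Theorem \ref{thm:1_2typeC} provides a $(1,2)$-decomposition $S^3 = V_1 \cup V_2$ in which each $V_i$ is a solid torus and $K \cap V_i$ consists of a pair of boundary-parallel arcs forming a trivial $2$-string tangle in $V_i$ (and, in fact, the axis of symmetry is the core of one of the $V_i$, though this will not be used in computing tunnel number). The key step is then to verify that for each $i$ the piece $V_i \cap E(K)$ is a handlebody of genus $3$. This is an instance of the standard fact that the complement of a trivial $n$-string tangle in a genus-$g$ handlebody is a handlebody of genus $g + n$: the disks realizing boundary-parallelism allow one to isotope the arcs into a collar of $\partial V_i$ and then absorb tubular neighborhoods into additional $1$-handles, raising the genus from $g = 1$ by $n = 2$.

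Gluing the two genus $3$ handlebodies $V_1 \cap E(K)$ and $V_2 \cap E(K)$ along the common torus $\partial V_1 \cap E(K)$ gives a genus $3$ Heegaard splitting of $E(K)$, so $g(E(K)) \leq 3$ and hence $t(K) \leq 2$. The only technical point — and the main obstacle, although it is well known — is the handlebody calculation in the middle step; making it precise requires the full trivial-tangle hypothesis supplied by the $(1,2)$-decomposition, rather than mere boundary-parallelism of each individual arc.
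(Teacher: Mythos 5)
Your argument is correct in substance and rests on the same two pillars as the paper's proof: Theorem \ref{thm:1_2typeC} supplies the $(1,2)$-decomposition, and the key computation is that removing a trivial $2$-string tangle from a solid torus leaves a genus $3$ handlebody. Where you differ is in how this is converted into the tunnel number bound. The paper exhibits an explicit $2$-tunnel system: it adds two arcs $\gamma_1,\gamma_2$ inside one handlebody $H_1$ and observes that the complement of $K\cup\gamma_1\cup\gamma_2$ retracts onto $H_2$ minus two thickened boundary-parallel arcs, i.e.\ onto a genus $3$ handlebody. You instead pass through the identity $g(E(K)) = t(K)+1$ and build a genus $3$ Heegaard splitting of the exterior. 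These are the two standard dual formulations of the fact that a $(g,b)$-knot has tunnel number at most $g+b-1$, so nothing essential is gained or lost; the paper's version is marginally more self-contained since it does not need to quote the relation between tunnel number and Heegaard genus of the exterior.

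One technical slip worth fixing: the surface $\partial V_1 \cap E(K)$ along which you propose to glue is not a torus but a torus with four open disks removed, so $(V_1\cap E(K))\cup(V_2\cap E(K))$ as you describe it is not literally a Heegaard splitting (the splitting surface must be closed). The standard repair is to take one handlebody to be $V_1\setminus \mathrm{int}\,N(K\cap V_1)$ and the other to be $\bigl(V_2\cup N(K\cap V_1)\bigr)\setminus \mathrm{int}\,N(K)$, whose common boundary is the closed genus $3$ surface obtained from $\partial V_1$ by replacing the four disks with the two annuli of $\partial N(K\cap V_1)$; one then checks both pieces are genus $3$ handlebodies, using triviality of the tangles exactly as you indicate. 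This is a routine adjustment, not a gap in the idea.
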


\begin{proof}
By Theorem \ref{thm:1_2typeC}, $K$ is a $(1,2)$ knot. We will show that $(1,2)$ knots have tunnel number 2 or less. Indeed, consider the $(1,2)$ decomposition of $K$ which is the union of two handlebodies $H_1 \cup H_2$. We may connect the two boundary parallel arcs of $K \cap H_1$ with a pair of arcs $\gamma_1$ and $\gamma_2$ such that $\gamma_1 \cup \gamma_2$ is isotopic to the core of $H_1$. Removing a neighborhood of $K \cup \gamma_1 \cup \gamma_2$ from $S^3$ leaves us with a space which retracts onto $H_2$ with a pair of thickened boundary-parallel arcs removed. This is a genus 3 handlebody, so that the tunnel number of $K$ is at most 2.
\end{proof}

We can now prove Theorem \ref{thm:typeCadditive}, which we restate here for convenience.
\typeCadditive*

\begin{proof}
By \cite[Theorem 14]{MR1660345}, the tunnel number of $K_1\#K_2\#K_3$ is at least 3, so that by Corollary \ref{cor:tunnel}, $\widetilde{u}_C(K_1\#K_2\#K_3) = \infty$.
\end{proof}

\subsection{Proof of Theorem \ref{thm:1_2typeC}}
We begin with some necessary definitions and lemmas.
\begin{definition} \label{def:trivialarcs}
Let $a$ and $b$ be a pair of arcs in $D^2 \times S^1$ which are symmetric under a $\pi$ rotation around the core $\{0\} \times S^1$ of the handlebody. Considered up to isotopy relative to the four points on the boundary, we say that $a$ and $b$ are \emph{trivial} if they match exactly the arcs shown on the left in Figure \ref{fig:typeCoutcomes}, where the dotted line indicates the core of the handlebody.
\end{definition}

\begin{figure}
\begin{overpic}[width=350pt, grid=false]{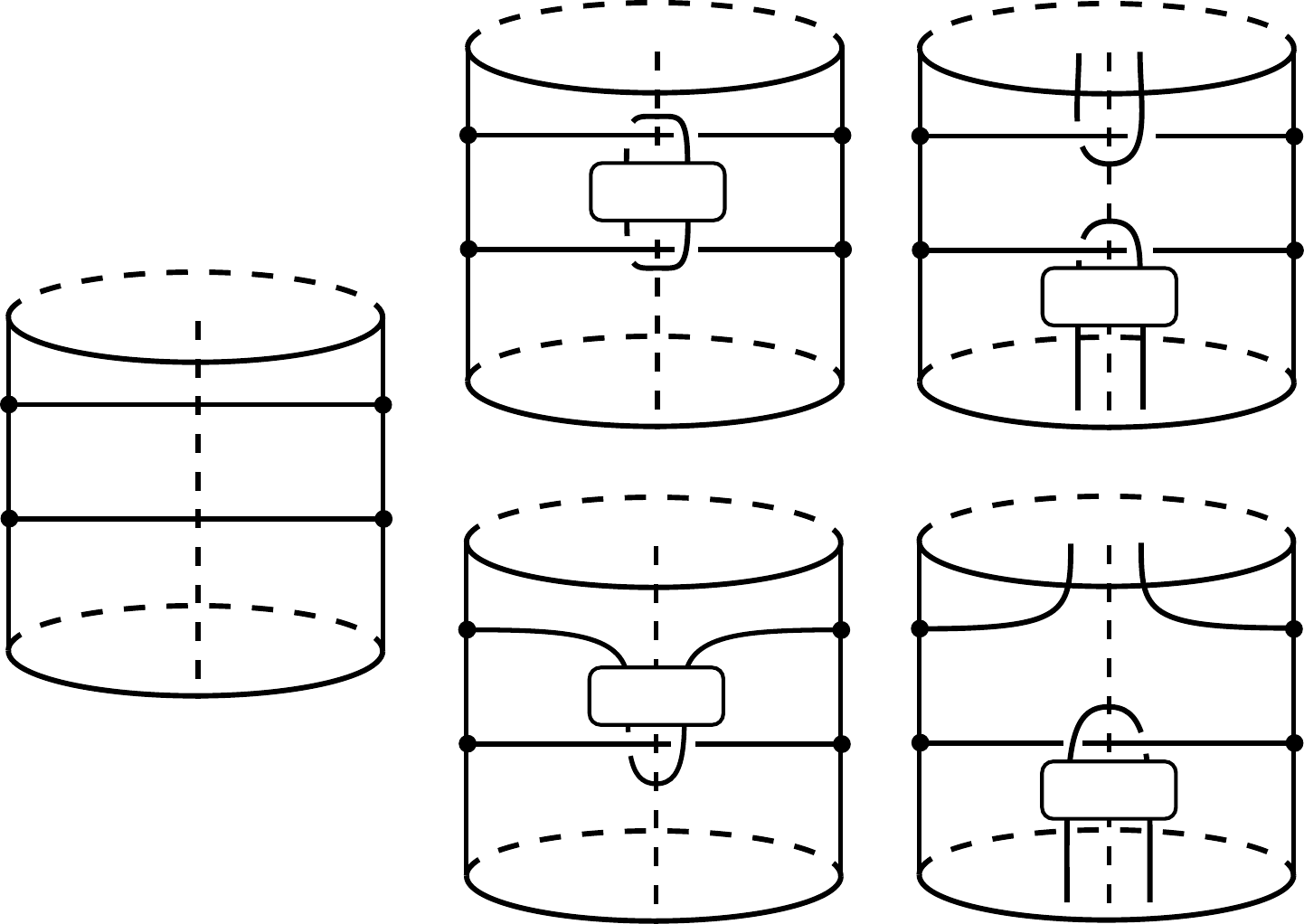}
\put (48,55.3) {$n/2$}
\put (48,16.5) {$n/2$}
\put (82.5,9.4) {$n/2$}
\put (82.5,47.3) {$n/2$}
\end{overpic}
\caption{A trivial pair of arcs in $D^2 \times S^1$ (left). The possible symmetric surgery curves corresponding to a type C crossing change are shown top center and top right, where $n/2$ indicates $n$ half-twists. The resulting tangles after the type C crossing change are shown bottom center and bottom right, respectively.}
\label{fig:typeCoutcomes}
\end{figure}
For the following lemma, note that we can perform type C crossing changes on strongly invertible tangles in $D^2 \times S^1$ (where the axis of symmetry is the core of the handlebody), using essentially the same definition as for strongly invertible knots.
\begin{lemma} \label{lem:typeCoutcomes}
Given a pair of trivial symmetric arcs in $D^2 \times S^1$, the result of a type C move, up to equivariant isotopy relative to the boundary, is one of the tangles shown in Figure \ref{fig:typeCoutcomes} in the bottom center or bottom right.
\end{lemma}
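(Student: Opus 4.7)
The plan is to enumerate the type C moves available to the trivial symmetric tangle and show that, after equivariant isotopy rel boundary, only the two stated outcomes survive. I would first locate the fixed points of the involution on the arcs: in the trivial configuration each arc transversely intersects the core of $D^2 \times S^1$ once, so there are precisely two fixed points of the $\pi$-rotation lying on the tangle, and a type C self-intersection arises exactly when these two fixed points collide along the core.

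Next, I would parametrize the ways to arrange such a collision. Starting from the trivial configuration, an equivariant isotopy fixing the four endpoints on the boundary can slide the two fixed points along the core towards each other, and can equivariantly wind the arcs around the core any integer number of half-twists before the collision happens. Together with the binary choice of which strand is over at the collision, this produces exactly the one-parameter family of on-axis crossing configurations drawn in the top center and top right of Figure \ref{fig:typeCoutcomes}, where the integer $n$ records the number of half-twists. Performing the type C crossing change on each configuration yields the tangles drawn in the bottom row of the figure, still a priori parametrized by $n$.

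Finally, I would show that the parameter $n$ collapses under equivariant isotopy rel boundary, leaving precisely the two outcomes claimed. After the crossing change, the two arcs meet transversely at a single point on the axis in a locally unknotted manner, and I would exhibit an explicit equivariant ambient isotopy of $D^2 \times S^1$, fixing the four boundary endpoints pointwise, that absorbs two half-twists at a time into this local picture. Reducing $n$ modulo two then leaves exactly the bottom center and bottom right tangles of Figure \ref{fig:typeCoutcomes}, one corresponding to each sign of the crossing change.

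The main obstacle is carrying out this twist-absorption step carefully: one must produce an explicit symmetric isotopy that changes $n$ by two while keeping the four boundary punctures fixed, and then verify that the full twist becomes nullhomotopic rel boundary only after the crossing change has been performed (it is not nullhomotopic beforehand). This is a direct but delicate diagrammatic check, essentially the observation that the post-crossing-change tangle contains a symmetric local disk across the axis that allows a $2\pi$ rotation of one strand past the other to be undone equivariantly.
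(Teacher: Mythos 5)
Your final step is where the argument breaks down, and it is trying to prove something stronger than the lemma actually asserts. The tangles in the bottom center and bottom right of Figure \ref{fig:typeCoutcomes} are not two individual tangles: each carries the label $n/2$, i.e.\ each is an infinite family parametrized by the number $n$ of half-twists, and the lemma only claims that the output of a type C move lies in one of these two families. Your proposed ``twist-absorption'' isotopy, which would reduce $n$ modulo $2$ rel the four boundary points, does not exist: a full twist of the two strands about each other along the axis changes the isotopy class of the tangle rel boundary, as one can detect from the quotient tangle or from the resulting knots. The paper itself relies on these outputs being distinct --- in the proof of Lemma \ref{lem:12typeC} the mapping classes $\widetilde{\sigma}$ and $\widetilde{\ell}_+$ are recorded as producing the bottom-center tangle with $n=0$ and with $n=-1$ half-twists respectively, and in the proof of Theorem \ref{thm:nonadditive} the knots $J_m^+$ obtained from type C moves with $m$ twists are treated as genuinely different knots for different $m$. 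So the collapse step should be deleted entirely; the statement to be proved already has $n$ as a free parameter in the output.

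The remaining issue is that your enumeration of the possible type C moves (your second step) is asserted rather than proved: you need a reason why every type C self-intersection can be normalized, by equivariant isotopy, to a collision of the two fixed points along the core with some integer amount of winding, and why the winding number together with the choice of side of the core is a complete set of data. The paper supplies exactly this via a surgery description: a type C move is surgery on an unknot bounding a symmetric disk meeting each arc once; symmetry forces the disk to meet the arcs at their fixed points; Smith theory says the fixed set of the involution on the disk is contractible, hence an arc containing both fixed points, hence one of the two arcs of the axis joining them; so the disk is an interval bundle (a band with $n$ half-twists) over one of these two arcs. This yields precisely the two families of surgery curves in the top center and top right of the figure, and performing the surgery then produces the two families of output tangles. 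If you want to keep your homotopy-collision picture, you would still need an argument of this kind to rule out other configurations.
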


\begin{proof}
To begin, we will think of a type C move as surgery along an unknot which bounds a symmetric disk that intersects each arc once. Since this disk can intersect each arc only once and is symmetric, the disk must intersect the arcs at their fixed points. Now any order 2 symmetry of a disk has a contractible fixed set by classical results of Smith (see for example \cite[Corollary 1.3.8]{MR1236839}). Since we have at least two fixed points (one on each arc), the fixed set of the disk must be an arc. There are exactly two fixed arcs connecting the two known fixed points, so the disk retracts to an interval bundle over an arc contained in the axis of symmetry. The two resulting possibilities are shown in the top center or top right of Figure \ref{fig:typeCoutcomes}. Finally, performing $+1$-surgery along these curves produces the arcs shown in the bottom center and bottom right of Figure \ref{fig:typeCoutcomes}.
\end{proof}

Let $T^2_i$ be the torus with $i$ punctures. Let $\MCG(T^2_2)$ be the mapping class group of the twice-punctured torus $T^2_2$. Let $\MCG((S^1 \times D^2)_2)$ be the mapping class group of $S^1 \times D^2$ which preserves setwise a pair of marked points on the boundary. Let $\SMCG((S^1 \times D^2)_4)$ be the symmetric mapping class group of $S^1 \times D^2$ consisting of diffeomorphisms which respect the symmetry $\rho$ given by a $\pi$ rotation on the $D^2$ component and preserve a $\rho$-invariant set of four marked points. Finally, let $\LMCG((S^1 \times D^2)_2)$ be the image of $\SMCG((S^1 \times D^2)_4)$ in $\MCG((S^1 \times D^2)_2)$ under the map induced by the quotient. 

The following lemma can be thought of as a Birman-Hilden-type theorem (see e.g. \cite{MR4275077}) for the two-fold branched covering of a genus one handlebody with two marked points over its core.
\begin{lemma} \label{lem:mcgiso}
There is a short exact sequence of groups 
\[
1 \to \mathbb{Z}/2\mathbb{Z} \overset{i}{\to} \SMCG((S^1 \times D^2)_4) \overset{p}{\to} \LMCG((S^1 \times D^2)_2) \to 1,
\]
where $i$ is the inclusion of the subgroup generated by the mapping class of $\rho$, and $p$ is the map induced from the quotient map of the symmetry $\rho$.
\end{lemma}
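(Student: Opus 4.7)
The plan is to verify each of the three exactness statements in turn; the crux lies in the final step, which is an equivariant isotopy-lifting argument of Birman--Hilden type applied to the branched double cover $\pi\colon (S^1\times D^2)_4 \to (S^1\times D^2)_2$ given by quotienting by $\rho$.

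First I would check that $p$ and $i$ are well-defined and that $p$ is surjective, and then show that $i$ is injective. A symmetric diffeomorphism descends to the quotient, and the four marked points descend to two (since $\rho$ has fixed set equal to the core, no marked point on the boundary is fixed, and so the four marked points form two $\rho$-orbits). Symmetric isotopies descend to isotopies, so $p$ is a well-defined homomorphism, and surjective by the very definition of $\LMCG$ as the image of $p$. The deck transformation $\rho$ is itself a symmetric diffeomorphism descending to the identity of the quotient, so $i$ is well-defined and $i(\rho)\in\ker p$. Injectivity of $i$ holds because $\rho$, having no fixed points on the boundary, induces a nontrivial permutation (a product of two transpositions) of the four marked points, whereas the identity fixes each marked point.

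For the remaining inclusion $\ker p\subseteq i(\mathbb{Z}/2\mathbb{Z})$, suppose $\phi\in\SMCG((S^1\times D^2)_4)$ with $\bar\phi\simeq\mathrm{id}$ in $\MCG((S^1\times D^2)_2)$, via an isotopy $\bar H_t$ with $\bar H_0=\mathrm{id}$ and $\bar H_1=\bar\phi$. Both endpoints preserve the core of the target solid torus setwise (the identity trivially, and $\bar\phi$ because $\phi$ preserves the $\rho$-fixed set). I would modify $\bar H_t$ rel.\ endpoints so that it preserves the core setwise at every time. On the complement of the core, $\pi$ is a genuine double cover, so the modified $\bar H_t$ lifts to a continuous family of $\rho$-equivariant diffeomorphisms of the punctured solid torus; by equivariance this lift extends continuously across the core to a symmetric isotopy $H_t$ with $H_0\in\{\mathrm{id},\rho\}$ and $H_1$ an equivariant lift of $\bar\phi$. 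Since $\phi$ is another equivariant lift of $\bar\phi$, we have $\phi\in\{H_1,\rho H_1\}$; concatenating with $H_t$ (and possibly $\rho$) exhibits $\phi$ as symmetrically isotopic to either $\mathrm{id}$ or $\rho$, so $\phi\in i(\mathbb{Z}/2\mathbb{Z})$.

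The main obstacle is the step of modifying $\bar H_t$ to preserve the core setwise at every time $t$, since without this the lift jumps at the branch locus. I expect the cleanest route is first to homotope $\bar\phi$ so that it agrees with the identity on a neighborhood of the core (possible because $\bar\phi$ preserves the core's isotopy class and the two marked points lie on the boundary, so there is room to work near the core), and then to apply a parametrized isotopy extension theorem to adjust the entire family $\bar H_t$ so that it preserves the core setwise without disturbing its endpoints.
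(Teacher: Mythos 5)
Your overall strategy coincides with the paper's: surjectivity of $p$ and injectivity of $i$ are the easy parts, and the content is showing $\ker p \subseteq \langle[\rho]\rangle$ by producing an isotopy from $\bar\phi = p(f)$ to the identity that preserves the core of $S^1\times D^2$ setwise at every time, and then lifting that isotopy equivariantly through the branched double cover. Your lifting step (lift over the complement of the branch locus, extend by continuity, conclude $\phi\in\{H_1,\rho H_1\}$) is fine and matches the paper.

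The gap is exactly at the step you flag as the main obstacle, and the tool you propose does not close it. A parametrized isotopy extension theorem extends isotopies of a submanifold to ambient isotopies; it does not let you deform a given ambient isotopy $\bar H_t$ into one preserving the core. The obstruction to doing so is the based loop $t\mapsto \bar H_t(c)$ in the space of embedded circles isotopic to the core $c$: you need this loop to be null-homotopic (and to bound a disk over which the evaluation fibration $\mathrm{Diff}\to\mathrm{Emb}$ admits the needed lift) before you can compose $\bar H_t$ with a correcting ambient isotopy that returns every $\bar H_t(c)$ to $c$ without disturbing the endpoints. Making $\bar\phi$ the identity near the core fixes the endpoints of this loop but says nothing about the loop itself. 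One can kill the obstruction by invoking the known homotopy type of $\mathrm{Diff}(S^1\times D^2)$ (Hatcher), but you neither cite nor prove this, so as written the argument is incomplete. The paper avoids the issue entirely by \emph{constructing} a core-preserving isotopy from $p(f)$ to the identity directly: it takes an annulus $A$ joining the core to the boundary, uses a standard innermost-intersection argument to arrange $p(f)(A)=A$ and then $p(f)|_A=\mathrm{id}$ (allowing full twists of the core, which accounts for meridional Dehn twists), and finishes using the fact that the mapping class group of the complementary solid torus injects into that of its boundary. To repair your proof you would either need to supply an argument of this kind or justify the triviality of $\pi_1$ of the space of cores.
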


\begin{proof}
First, observe that $p \circ i$ is the trivial map, since $\rho$ projects to the identity map on $(S^1 \times D^2)_2$. It remains to check that the kernel of $p$ is the subgroup generated by $[\rho]$ in $\SMCG((S^1 \times D^2)_4)$. Let $f \in \SMCG((S^1 \times D^2)_4)$ such that $p(f)$ is trivial in $\LMCG((S^1 \times D^2)_2)$. By composing with an equivariant diffeomorphism near the core if necessary, we may assume that $f$ fixes the core $S^1 \times \{0\}$ pointwise. We show below that there is an isotopy $H$ from $p(f)$ to the identity map $Id$ such that at each time $t$, $H_t$ fixes the core setwise. Since $H$ fixes the core, we can lift $H$ to an equivariant isotopy on $(S^1 \times D^2)_4$ from $f$ to an element in $p^{-1}(Id)$. Hence $f$ is in the subgroup generated by $[\rho]$.

To construct $H$, first note that since $p(f)$ is isotopic to the identity, we may isotope $p(f)$ near the boundary so that the boundary is pointwise fixed. This isotopy also fixes a neighborhood of the core pointwise. Now choose an annulus $A$ embedded in $S^1 \times D^2$ such that the one boundary component of $A$ is the core, and the other boundary component lies on $\partial (S^1 \times D^2)$. Now consider $A$ and $p(f)(A)$. By a standard argument applied to the intersection between $A$ and $p(f)(A)$, we can isotope across 3-balls and handlebodies bounded by $A \cup p(f)(A)$ to modify $p(f)$ such that $p(f)(A) = A$, and these isotopies fix pointwise the core and the boundary. Next, we isotope $p(f)$ (fixing $\partial(S^1 \times D^2)$ pointwise, but applying full twists to the core as necessary) so that $p(f)|_A$ is the identity map. The problem is now reduced to finding an isotopy relative to $(S^1 \times S^1) \cup A$ between $p(f)$ and $Id$. To do this, first isotope $p(f)$ to be identity on a neighborhood $\nu(A \cup (S^1 \times S^1))$. Then recall that the mapping class group of $S^1 \times D^2$, which is diffeomorphic to $(S^1 \times D^2) - (\nu(A \cup S^1 \times S^1))$, is a subgroup of the mapping class group of its boundary. Hence there is an isotopy between $p(f)|_{(S^1 \times D^2) - (\nu(A \cup S^1 \times S^1))}$ and $Id$, which fixes at each time its boundary. Gluing this to the identity isotopy on $\nu(S^1 \times S^1 \cup A)$, we have an isotopy $H$ from $p(f)$ to $Id$ which fixes pointwise at each time the boundary preserves the core setwise.

\end{proof}

\begin{figure}
\begin{overpic}[width=300pt, grid=false]{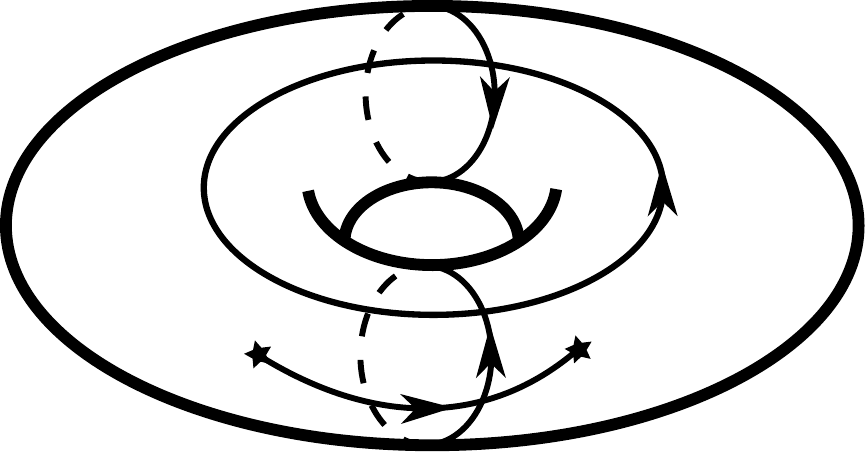}
\put (58,36) {$\beta$}
\put (79,28) {$\alpha$}
\put (58,11) {$\gamma$}
\put (48,6.5) {$\delta$}
\end{overpic}
\caption{The torus with two marked points $(S^1 \times S^1)_2$ with the indicated curves $\alpha, \beta, \gamma$, and $\delta$. When thought of as the boundary of $S^1 \times D^2$, the curves $\beta$ and $\gamma$ bound disks.}
\label{fig:MCGgens}
\end{figure}

Before proceeding, we define some elements of $\MCG((S^1 \times S^1)_2)$ which we will use to write down a generating set for $\LMCG((S^1 \times D^2)_2)$. We will need the following specific diffeomorphisms, which we will also use to represent the corresponding isotopy classes of diffeomorphisms. The definitions refer to the curves indicated in Figure \ref{fig:MCGgens}.
\begin{enumerate}
\item The diffeomorphisms $t_\alpha$, $t_\beta$ and $t_\gamma$ are given by Dehn twists around the curves $\alpha$, $\beta$ and $\gamma$ respectively.
\item The diffeomorphism $\tau$ is given by $t_\alpha t_\beta t_\alpha t_\gamma t_\alpha t_\beta$, which is the hyperelliptic involution which fixes the two marked points, preserves $\alpha$, and swaps $\beta$ and $\gamma$.
\item The diffeomorphism $\sigma$ is the identity map outside of a neighborhood of $\delta$, and swaps the marked points by a clockwise $180^{\circ}$ rotation within a neighborhood of $\delta$.
\item The diffeomorphism $m$ is given by $t_\beta t_\gamma^{-1}$, which pulls one marked point around a loop parallel to $\beta$.
\item The diffeomorphism $\ell$ is given by $t_\gamma t_\beta^{-1} t_\alpha t_\beta t_\gamma^{-1} t_\alpha^{-1}$, which pulls one marked point around a loop parallel to $\alpha$.
\end{enumerate}
It will also be useful to remember that $\MCG((S^1 \times D^2)_2)$ is exactly the subgroup of $\MCG((S^1 \times S^1)_2)$ consisting of elements which map a meridian to a meridian. Hence we can slightly abuse notation and use (compositions of) the above diffeomorphisms to refer to elements of $\MCG((S^1 \times D^2)_2)$, whenever a meridian is taken to a meridian.

\begin{lemma} \label{lemma:PMCGgens}
The pure mapping class group $\PMCG((S^1 \times D^2)_2)$ is generated by $t_{\beta}$, $\tau$, $m$, and $\ell$. Moreover, every element of $\PMCG((S^1 \times D^2)_2)$ is of the form $w \cdot t_\beta^i \cdot \tau^j$, where $w$ is a word in $m$ and $l$, $i \in \mathbb{Z}$, and $j \in \{0,1\}$.
\end{lemma}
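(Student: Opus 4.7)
The plan is to apply a Birman-type short exact sequence to the forgetful map $F\colon \PMCG((S^1 \times D^2)_2) \to \MCG(S^1 \times D^2)$ that deletes the two marked points. The image $\MCG(S^1 \times D^2)$ is classically isomorphic to $\mathbb{Z} \times \mathbb{Z}/2$: identifying it with the subgroup of $\MCG(T^2) = SL(2,\mathbb{Z})$ preserving the meridian class gives the upper-triangular matrices with $\pm 1$ on the diagonal, and Hatcher's theorem assures $\MCG(S^1 \times D^2, \partial) = 0$. Under $F$, the class $t_\beta$ maps to the generator of $\mathbb{Z}$ and $\tau$ maps to the generator of $\mathbb{Z}/2$, and these commute, so every element of the image has a unique expression $t_\beta^i \tau^j$ with $i \in \mathbb{Z}$ and $j \in \{0,1\}$.

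The heart of the argument is identifying the kernel $K = \ker F$ as $\langle m, \ell \rangle$. By an adaptation of the Birman exact sequence, $K$ is the image of a point-pushing homomorphism from the fundamental group of an appropriate configuration space of two points in the solid torus. The key observation is that pushing both marked points together around a common boundary loop can be realized by an ambient isotopy through the handlebody interior (it is the restriction of a full rotation of the whole solid torus), and hence is trivial in $\PMCG$. This forces the push of one marked point to be, up to relations, the inverse of the push of the other along the same loop, so that a single push $\ell$ around $\alpha$ and a single push $m$ around $\beta$ suffice to generate $K$. Once this is in place, I assemble the normal form: for $f \in \PMCG((S^1 \times D^2)_2)$, writing $F(f) = t_\beta^i \tau^j$ gives $f \cdot (t_\beta^i \tau^j)^{-1} \in K$ equal to some $w \in \langle m, \ell \rangle$, and hence $f = w \cdot t_\beta^i \cdot \tau^j$.

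The main obstacle is the kernel identification: carefully showing $K = \langle m, \ell \rangle$ requires a Birman-type analysis adapted to a 3-manifold with boundary marked points (rather than the standard surface setting), together with the nontrivial fact that the ``second'' point-pushes can be expressed in terms of the first via isotopies through the 3-dimensional interior. A related subtlety is to confirm that the image of $F$ is exactly $\mathbb{Z} \times \mathbb{Z}/2$ rather than a proper subgroup, i.e., that the chosen $\tau$ really does project to the nontrivial involution, which follows from its nontrivial action on $H_1(S^1 \times D^2)$.
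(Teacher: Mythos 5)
Your skeleton---apply a forgetful map, identify the image as $\langle t_\beta, \tau\rangle \cong \mathbb{Z}\times\mathbb{Z}/2$, identify the kernel as the point-pushing subgroup $\langle m,\ell\rangle$, and read off the normal form $w\cdot t_\beta^i\cdot\tau^j$---is the same as the paper's. The paper, however, runs the argument entirely on the boundary: it uses the identification of $\MCG((S^1\times D^2)_2)$ with the meridian-preserving subgroup of $\MCG((S^1\times S^1)_2)$, applies the forgetful map $\Psi\colon \PMCG((S^1\times S^1)_2)\to\MCG(S^1\times S^1)\cong SL_2(\mathbb{Z})$, computes the image as the matrices preserving $\langle\beta\rangle$, and quotes \cite[Proposition 1]{MR2055680} for $\ker\Psi = \langle m,\ell\rangle$. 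You instead try to carry out the kernel computation from scratch on the solid torus, and that is where the gaps are.

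First, the relevant configuration space is of two points \emph{on the boundary torus}, not ``in the solid torus.'' This is not cosmetic: the marked points are constrained to $\partial(S^1\times D^2)$, and $m$ is a push around a meridian, which bounds a disk in the solid torus. If the points were allowed to travel through the interior, the $m$-push would be isotopically trivial and the statement of the lemma would fail; $m$ survives precisely because the push lives in $\pi_1$ of the punctured boundary torus. Second, the claim that ``Hatcher's theorem assures $\MCG(S^1\times D^2,\partial)=0$'' is false if $\partial$ means rel boundary: the rel-boundary mapping class group of the solid torus is infinite, generated by a twist along a boundary-parallel torus. What you actually need---and what is true---is that the restriction map $\MCG(S^1\times D^2)\to\MCG(S^1\times S^1)$ is injective, which holds because those rel-boundary twists become isotopically trivial once the boundary is allowed to move. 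Third, reducing the point-pushing subgroup to $\langle m,\ell\rangle$ requires killing \emph{both} diagonal pushes---around the longitude (via the $S^1$-rotation) and around the meridian (via the rotation of the $D^2$ factor)---before the pushes of the second marked point can be traded for inverses of pushes of the first; you invoke only one rotation, and the step is asserted rather than proved. All of this is repairable, but as written the kernel identification, which you correctly flag as the heart of the matter, is not established; the paper's route of embedding everything into $\PMCG((S^1\times S^1)_2)$ and citing the known computation of its point-pushing subgroup sidesteps the need to set up a Birman exact sequence for a $3$-manifold with boundary marked points.
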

\begin{proof}
Consider the natural map $\Psi \colon \PMCG((S^1 \times S^1)_2) \to \MCG(S^1 \times S^1)$ given by forgetting the marked points. Then $\PMCG((S^1 \times D^2)_2) = \Psi^{-1}(\langle \Psi(t_\beta), \Psi(\tau)\rangle)$. To see this we will check that $\Psi(t_\beta)$ and $\Psi(\tau)$ generate the subgroup of $\MCG(S^1 \times S^1)$ consisting of elements which send meridians to meridians. Recalling that 
\[
\MCG(S^1 \times S^1) \cong \textup{SL}_2(\mathbb{Z})\colon \mathbb{Z}\langle\beta\rangle \oplus \mathbb{Z}\langle\alpha\rangle \to \mathbb{Z}\langle\beta\rangle \oplus \mathbb{Z}\langle\alpha\rangle,
\]
we have 
\[
\Psi(t_\beta) = \begin{bmatrix} 1 &1 \\ 0 & 1\end{bmatrix}, \mbox{ and } \Psi(\tau) = \begin{bmatrix} -1 &0 \\ 0 & -1\end{bmatrix},
\]
which generate all matrices preserving $\langle \beta \rangle$; that is matrices of the form
\[
\begin{bmatrix} \pm1 & n \\ 0 & \pm1\end{bmatrix}.
\]
Now we observe that $\Psi^{-1}(\langle \Psi(t_\beta), \Psi(\tau)\rangle)$ is generated by $t_\beta$, $\tau$, and ker$(\Psi)$. However ker$(\Psi) = \langle m, \ell\rangle$; see for example \cite[Proposition 1]{MR2055680}. Hence $\PMCG((S^1 \times D^2)_2) = \langle t_\beta, \tau, m,\ell \rangle$. Finally, since every element of $\PMCG((S^1 \times D^2)_2)$ is a preimage of an element in $\MCG(S^1 \times S^1)$ of the form $\Psi(t_\beta^i \tau^j)$, each element can be written as $w \cdot t_\beta^i \tau^j$ for some element $w$ in ker$(\Psi) = \langle m, \ell \rangle$.
\end{proof}
Before the next lemma, we recall a special case of the liftability criteria from \cite[Proposition 4.4]{MR4071371}. We will use the notation from Figure \ref{fig:MCGgens}, as well as the following.

Let $\LPMCG(X_n)$ denote the liftable pure mapping class group of $X$ with $n$ marked points. We will be interested in $X_n = (S^1 \times S^1)_2$ and $X_n = (S^1 \times D^2)_2$.

Let $p$ be the projection map corresponding to the 2-fold cover from the 4-punctured torus $(S^1 \times S^1)_4$ to the 2-punctured torus $(S^1 \times S^1)_2$ as induced by the symmetry $\rho$ above. Let 
\[
q \colon H_1((S^1 \times S^1)_2;\mathbb{Z}) \to H_1((S^1 \times S^1)_2;\mathbb{Z})/p_*H_1((S^1 \times S^1)_4;\mathbb{Z}) \cong \mathbb{Z}/2\mathbb{Z},
\]
be the indicated map on homology. Note that $q(\beta) = q(\gamma) = 1$ and $q(\alpha) = 0$. Given a diffeomorphism $f \colon (S^1 \times S^1)_2 \to (S^1 \times S^1)_2$, let $a_f \in H_1((S^1 \times S^1)_2;\mathbb{Z})$ be the element $a_f = f(\delta) - \delta$ in the first homology group of $(S^1 \times S^1)_2$ relative to the punctures. Here $\delta$ is the homology class represented by the arc $\delta$ in Figure \ref{fig:MCGgens}.

\begin{lemma}[Corollary of Proposition 4.4 of \cite{MR4071371}] \label{lemma:liftabilitycondition}
The mapping class of $f$ is in $\LPMCG((S^1 \times S^1)_2)$ if and only if $qf_*(\alpha) = 0, qf_*(\beta) = qf_*(\gamma) = 1$, and $q(a_f) = 0$.
\end{lemma}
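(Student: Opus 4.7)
The plan is to apply the classical covering space lifting criterion to the unramified double cover $p\colon (S^1 \times S^1)_4 \to (S^1 \times S^1)_2$ classified by the homomorphism $q$, and then to use the arc $\delta$ to detect when the resulting lift is pure on the cover. The main subtlety will be to verify carefully that puncture-loop classes behave automatically under pure mapping classes, so that the three conditions on $\alpha, \beta, \gamma$ really do capture preservation of $\ker q$ on all of $H_1((S^1 \times S^1)_2; \mathbb{Z})$.

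First, I would invoke the standard lifting criterion: a diffeomorphism $f$ of $(S^1 \times S^1)_2$ admits a lift to a diffeomorphism of $(S^1 \times S^1)_4$ if and only if $f_*(\ker q) = \ker q$, which (since the quotient is $\mathbb{Z}/2\mathbb{Z}$) is equivalent to $q \circ f_* = q$ on $H_1$. The classes $\alpha$, $\beta$, $\gamma$ together with a small loop $\pi$ around one puncture generate $H_1((S^1 \times S^1)_2; \mathbb{Z})$. Because $f$ is pure, $f_*(\pi)$ is again the class of a small loop around a puncture, so $qf_*(\pi) = q(\pi) = 0$ holds automatically. Hence $q \circ f_* = q$ reduces precisely to the first three stated conditions $qf_*(\alpha) = 0$ and $qf_*(\beta) = qf_*(\gamma) = 1$.

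Next, given such a lift $\tilde f$, it is unique up to composition with the non-trivial deck transformation $\rho$. Since $f$ is pure, $\tilde f$ preserves each two-point fiber of $p$ over $p_1$ and $p_2$, so by possibly replacing $\tilde f$ with $\rho \circ \tilde f$ we may assume $\tilde f(\tilde p_1) = \tilde p_1$ for a chosen preimage $\tilde p_1$ of $p_1$. Then $f \in \LPMCG((S^1 \times S^1)_2)$ exactly when $\tilde f(\tilde p_2) = \tilde p_2$ for a chosen preimage $\tilde p_2$ of $p_2$, so that the lift is pure. To detect this, lift $\delta$ to an arc $\tilde \delta$ from $\tilde p_1$ to $\tilde p_2$; by uniqueness of path lifting, $\tilde f(\tilde \delta)$ is the unique lift of $f(\delta)$ starting at $\tilde p_1$, with terminal point $\tilde f(\tilde p_2)$. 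The concatenation of $\tilde f(\tilde\delta)$ with $\tilde\delta^{-1}$ is a lift of the loop $f(\delta) \cdot \delta^{-1}$, whose homology class is $a_f$ (a well-defined absolute class because $f$ is pure, so $\partial(f(\delta) - \delta) = 0$). This concatenation closes up in the cover, i.e., $\tilde f(\tilde p_2) = \tilde p_2$, if and only if $[a_f] \in \ker q$, which is exactly the condition $q(a_f) = 0$. Combining these two steps yields the lemma.
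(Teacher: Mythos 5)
Your argument is correct, but be aware that the paper does not actually prove this lemma: it is imported wholesale as a corollary of Proposition 4.4 of \cite{MR4071371}, so there is no internal proof to compare against. What you have written is a self-contained derivation of that criterion for this particular double cover, and both halves are sound: the classical lifting criterion reduces liftability to $qf_* = q$ on $H_1$, the puncture classes are handled automatically because $f$ is pure (so the three conditions on $\alpha,\beta,\gamma$ are exactly $qf_*=q$ on a generating set), and the path-lifting computation correctly shows that $q(a_f)$ measures whether the lift normalized by $\widetilde{f}(\widetilde{p}_1)=\widetilde{p}_1$ fixes $\widetilde{p}_2$ or sends it to $\rho(\widetilde{p}_2)$. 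This supplies the covering-space argument that the paper outsources to the reference, which is arguably preferable to the bare citation.

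The one point you should make explicit is definitional. Your argument characterizes the pure mapping classes of $(S^1\times S^1)_2$ admitting a \emph{pure} lift, i.e., a lift fixing each of the four marked points. Under the weaker reading of ``liftable'' --- a lift that merely preserves the set of four marked points --- the first three conditions already suffice and $q(a_f)=0$ is not needed: for example, the point-push $m$ lifts to an equivariant homeomorphism that swaps the two preimages of the pushed point, so it is liftable in the weak sense even though $q(a_m)\neq 0$. Since the lemma's conclusion includes $q(a_f)=0$, the intended meaning of $\LPMCG((S^1\times S^1)_2)$ must be the one you use, and you should say so at the outset; the paper never spells out this definition, and the surrounding text (e.g., the identification of $\LPMCG$ with $\LMCG\cap\PMCG$, where $\LMCG$ is defined as an image of $\SMCG$ with no purity requirement on the lift) could otherwise be read with the weaker notion. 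This is a clarification your proof forces into the open rather than a gap in your reasoning.
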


\begin{lemma}
The group $\LPMCG((S^1 \times D^2)_2)$ is generated by $t_\beta^2, m^2, m\ell m^{-1}, \ell,$ and $\tau$.
\end{lemma}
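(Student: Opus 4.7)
The plan is to express an arbitrary element of $\PMCG((S^1 \times D^2)_2)$ in the normal form $f = w \cdot t_\beta^i \cdot \tau^j$ given by Lemma \ref{lemma:PMCGgens} and then apply the liftability criterion of Lemma \ref{lemma:liftabilitycondition} to determine which triples $(w,i,j)$ yield liftable classes. I regard $\PMCG((S^1 \times D^2)_2)$ as a subgroup of $\PMCG((S^1 \times S^1)_2)$ via restriction to the boundary, so the three conditions on $\alpha, \beta, \gamma$ and on $a_f$ apply directly.

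For conditions (1) and (2), the key observation is that $m$ and $\ell$ act trivially on the homology of the unpunctured torus, so their induced maps on $H_1((S^1 \times S^1)_2)$ differ from the identity only by puncture-loop classes. Since the double cover $(S^1 \times S^1)_4 \to (S^1 \times S^1)_2$ is unbranched, each puncture downstairs has two preimages upstairs, so the puncture-loop classes lie in $\operatorname{im}(p_*)$ and are killed by $q$. This reduces $qf_*$ on $\alpha, \beta, \gamma$ to $q(t_\beta^i \tau^j)_*$, and a direct calculation using $t_\beta^i(\alpha) = \alpha + i\beta$ and $\tau_*(\alpha) = -\alpha$, $\tau_*(\beta) = \gamma$ gives $qf_*(\alpha) \equiv i \pmod{2}$ and $qf_*(\beta) = qf_*(\gamma) = 1$. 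Thus (1) and (2) together force $i$ to be even and place no constraint on $w$ or $j$.

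For condition (3), I compute $a_f = f(\delta) - \delta$ using the cocycle rule $a_{fg} = a_f + f_*(a_g)$. Applying the formula $t_c(\delta) = \delta + \hat{i}(\delta, c)\, c$ in relative homology together with intersection counts extracted from Figure \ref{fig:MCGgens} (namely $\hat{i}(\delta, \alpha) = 0$ and $\hat{i}(\delta, \beta) - \hat{i}(\delta, \gamma) \equiv 1 \pmod{2}$), the contributions work out as follows: $a_{t_\beta^i}$ is an even multiple of $\beta$ once $i$ is even; $a_\tau$ is a puncture-parallel class or zero; $a_\ell$ is supported on $\alpha$-parallel translates and hence vanishes mod $q$; and $a_m \equiv \beta \pmod{\operatorname{im}(p_*)}$, contributing $1 \pmod{2}$. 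Combining these, $q(a_f)$ equals the $m$-exponent sum of $w$ modulo $2$, so liftability is equivalent to $i$ being even together with the total $m$-exponent sum of $w$ being even.

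The liftable elements are therefore exactly those of the form $w \cdot t_\beta^{2k} \cdot \tau^j$ with $w$ in the even-$m$-exponent-sum subgroup of the free group $\langle m, \ell\rangle$. By a standard Reidemeister--Schreier argument, this index-two subgroup is generated by $\{m^2, \ell, m\ell m^{-1}\}$, which together with $t_\beta^2$ and $\tau$ yields the claimed five-element generating set. The main obstacle I anticipate is carefully justifying the intersection counts with $\delta$ and the claim that $\tau$ and $\ell$ contribute trivially to $q(a_f)$; the cleanest route is probably to fix an explicit diagrammatic model matching Figure \ref{fig:MCGgens}, view $m$ and $\ell$ as point-pushes along $\beta$ and $\alpha$ respectively, and compute $m(\delta)$ and $\ell(\delta)$ directly.
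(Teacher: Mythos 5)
Your proposal is correct and takes essentially the same approach as the paper: write a general element as $w\, t_\beta^i\, \tau^j$ via Lemma \ref{lemma:PMCGgens}, apply the criterion of Lemma \ref{lemma:liftabilitycondition} to show liftability is equivalent to $i$ and the $m$-exponent sum of $w$ both being even, and then read off the five generators. The only differences are cosmetic: the paper computes $a_\tau=-\alpha$ and $a_m=\gamma$ directly rather than via the cocycle rule and intersection numbers (so your guess that $a_\tau$ is puncture-parallel is off, though harmless since $q(\alpha)=0$), and it asserts the final generating set without invoking Reidemeister--Schreier, which you make explicit.
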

\begin{proof}
We first observe that $\LPMCG((S^1 \times D^2)_2) = \LMCG((S^1 \times S^1)_2) \cap \PMCG((S^1 \times D^2)_2)$, by viewing $\MCG((S^1 \times D^2)_2)$ as a subgroup of $\MCG((S^1 \times S^1)_2)$. Then for any element $x$ in $\PMCG((S^1 \times D^2)_2)$, we can verify that $x \in \LPMCG((S^1 \times D^2)_2)$ using the conditions in Lemma \ref{lemma:liftabilitycondition}. By Lemma \ref{lemma:PMCGgens}, an arbitrary element of $\LPMCG((S^1 \times D^2)_2)$ has the form $f = wt_\beta^i \tau^j$ where $w$ is a word in $m$ and $\ell$, $i \in \mathbb{Z}$, and $j \in \{0,1\}$. Observe that $q(w(x)) = q(x)$ for all $x \in H_1((S^1 \times S^1)_2;\mathbb{Z})$, since $q$ factors through $H_1(S^1 \times S^1;\mathbb{Z})$, on which $w$ acts trivially.
We then compute
\begin{align*}
q w t_\beta^i \tau^j (\alpha) &= q w t_\beta^i ((-1)^j \alpha) = q w t_\beta^i (\alpha) = q w (\alpha + i\beta) = q(\alpha + i\beta) = i, \\
q w t_\beta^i \tau^j (\beta) &= q w t_\beta^i ((-1)^j \beta) = q w t_\beta^i (\beta) = q w (\beta) = q(\beta) = 1, \\
q w t_\beta^i \tau^j (\gamma) &= q w t_\beta^i ((-1)^j \gamma) = q w t_\beta^i (\gamma) = q w (\gamma) = q(\gamma) = 1, \\
\end{align*}
so that $i$ must be even for $w t_\beta^i \tau^j$ to be liftable. For last condition in Lemma \ref{lemma:liftabilitycondition}, we will need to keep track of the algebraic number of $m$'s and $\ell$'s appearing in $w$, call these values $|w|_m$ and $|w|_\ell$ respectively. Also, observe that $\tau$ is liftable by checking with Lemma \ref{lemma:liftabilitycondition}. Indeed $\tau(\delta) = \delta - \alpha$ so that $a_\tau = -\alpha$ and $q(-\alpha) = 0$. Then we have that $w t_\beta^i \tau$ is liftable if and only if $w t_\beta^i$ is liftable. Additionally, note that for any $x \in H_1((S^1 \times S^1)_2;\mathbb{Z})$, we have $m(x + \delta) = x + \gamma + \delta$ and $\ell(x + \delta) = x + \alpha + \delta$, noting that any loop around a puncture is trivial in the relative homology group. We can then check
\[
w t_\beta^i(\delta) = w(\delta) = \delta + |w|_m\gamma + |w|_\ell \alpha,
\] 
and so $q(a_{w t_\beta^i}) = |w|_m$. We conclude that $w t_\beta^i\tau^j$ is liftable if and only if $i$ and $|w|_m$ are both even. Words of this form are generated by $t_\beta^2, m^2, m \ell m^{-1}, \ell$ and $\tau$, as desired.
\end{proof}

We now choose preimages of $t_\beta^2, m^2, m\ell m^{-1}, \ell$, and $\tau$ in $\SMCG((S^1 \times D^2)_4)$; see Lemma \ref{lem:mcgiso}. We choose respectively the preimages $t_{\widetilde{\beta}}, \widetilde{m}, \widetilde{\ell}_+,$ and  $\widetilde{\ell}_-,$ as described in Figure \ref{fig:SMCGgens}, and $\widetilde{\tau}$, which is a hyperelliptic involution fixing the four marked points. We will also need the diffeomorphisms $\widetilde{\sigma}$, which is the lift of $\sigma$ given by swapping the four marked points in pairs, and $\rho$, which is the deck transformation involution on $(S^1 \times D^2)_4$.

\begin{figure}
\begin{overpic}[width=420pt, grid=false]{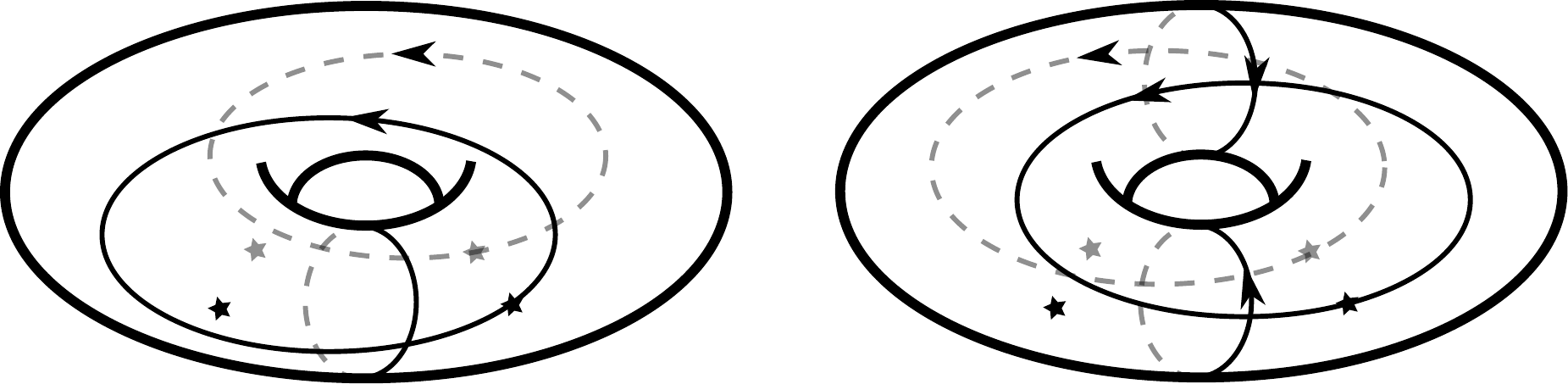}
\put (76.5,7.1) {$\widetilde{\gamma}$}
\put (80, 15.8) {$\widetilde{\beta}$}
\end{overpic}
\caption{Some generators of $\SMCG((S^1 \times D^2)_4)$. The generator $t_{\widetilde{\beta}}$ is given by a Dehn twist around the meridian $\widetilde{\beta}$, the generator $\widetilde{m}$ is given by the composition $t_{\widetilde{\beta}}t_{\widetilde{\gamma}}^{-1}$, the generator $\widetilde{\ell}_+$ (left) is given by dragging the right two marked points around the indicated longitudes, and the generator $\widetilde{\ell}_-$ (right) is given by dragging the right two marked points around the indicated longitudes.}
\label{fig:SMCGgens}
\end{figure}

\begin{proposition} \label{prop:smcggens}
The group $\SMCG((S^1 \times D^2)_4)$ is generated by $t_{\widetilde{\beta}}, \widetilde{m}, \widetilde{\ell}_+, \widetilde{\ell}_-, \widetilde{\tau},\widetilde{\sigma},$ and $\rho$.
\end{proposition}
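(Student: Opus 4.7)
The plan is to reduce to the previous lemma by using the short exact sequence of Lemma \ref{lem:mcgiso}, then lift a generating set of $\LMCG((S^1 \times D^2)_2)$ and throw in $\rho$ to cover the kernel of the projection $p$.

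First I would upgrade the presentation of $\LPMCG((S^1 \times D^2)_2)$ to a generating set for all of $\LMCG((S^1 \times D^2)_2)$. There is a short exact sequence
\[
1 \to \LPMCG((S^1 \times D^2)_2) \to \LMCG((S^1 \times D^2)_2) \to G \to 1,
\]
where $G \leq S_2$ is the image of the permutation-on-marked-points homomorphism, restricted to liftable classes. Since $\widetilde{\sigma} \in \SMCG((S^1 \times D^2)_4)$ projects under $p$ to $\sigma$, and $\sigma$ swaps the two marked points of $(S^1 \times D^2)_2$, the element $\sigma$ is liftable and non-trivial in the quotient; thus $G = S_2$ and $\sigma$ maps to its generator. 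Combining this with the previous lemma, $\LMCG((S^1 \times D^2)_2)$ is generated by $t_\beta^2, m^2, m\ell m^{-1}, \ell, \tau,$ and $\sigma$.

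Next I would verify that the claimed preimages are in fact preimages. Namely, reading off Figure \ref{fig:SMCGgens}, the Dehn twist $t_{\widetilde{\beta}}$ around the equivariant meridian $\widetilde{\beta}$ descends to $t_\beta^2$ (since $\widetilde{\beta}$ double-covers $\beta$), while $\widetilde{m} = t_{\widetilde{\beta}} t_{\widetilde{\gamma}}^{-1}$ descends to $m^2 = (t_\beta t_\gamma^{-1})^2$. The point-dragging diffeomorphisms $\widetilde{\ell}_+$ and $\widetilde{\ell}_-$ are, by construction, the equivariant lifts of $m\ell m^{-1}$ and $\ell$ respectively, corresponding to the two possible equivariant longitudinal drags of a symmetric pair of marked points. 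The hyperelliptic involution $\widetilde{\tau}$ fixing the four marked points projects to $\tau$, and $\widetilde{\sigma}$ projects to $\sigma$ by definition. Hence $p$ sends the collection $\{t_{\widetilde{\beta}}, \widetilde{m}, \widetilde{\ell}_+, \widetilde{\ell}_-, \widetilde{\tau}, \widetilde{\sigma}\}$ onto the generating set $\{t_\beta^2, m^2, m\ell m^{-1}, \ell, \tau, \sigma\}$ for $\LMCG((S^1 \times D^2)_2)$ established in the previous paragraph.

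Finally I would close the argument using Lemma \ref{lem:mcgiso}: given any $f \in \SMCG((S^1 \times D^2)_4)$, its image $p(f)$ is a word in the downstairs generators; lifting each letter to the chosen preimage gives an element $\widetilde{w} \in \SMCG((S^1 \times D^2)_4)$ with $p(\widetilde{w}) = p(f)$, so $f \widetilde{w}^{-1} \in \ker(p) = \langle \rho \rangle$ and hence $f$ is a word in $t_{\widetilde{\beta}}, \widetilde{m}, \widetilde{\ell}_+, \widetilde{\ell}_-, \widetilde{\tau}, \widetilde{\sigma},$ and $\rho$. The main obstacle is the bookkeeping in the previous paragraph: one must be careful that $\widetilde{\ell}_+$ and $\widetilde{\ell}_-$ as drawn really are well-defined equivariant mapping classes (the drag has to be performed symmetrically on both symmetric pairs of points without running into the axis) and that they project to the two homologically distinct longitudinal drags $m\ell m^{-1}$ and $\ell$ rather than to the same class; this is the only step that needs genuine picture-chasing rather than formal manipulation.
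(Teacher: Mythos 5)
Your proposal is correct and follows essentially the same route as the paper: combine the extension $1 \to \LPMCG((S^1 \times D^2)_2) \to \LMCG((S^1 \times D^2)_2) \to \mathbb{Z}/2\mathbb{Z} \to 1$ (detected by $\sigma$) with the extension of Lemma \ref{lem:mcgiso}, and generate the total group by chosen lifts of the downstairs generators together with the kernel generator $\rho$. The only difference is that you spell out the verification that each tilde element really projects to the intended downstairs generator, which the paper simply asserts when introducing the lifts before the proposition.
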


\begin{proof}
Note that there is an exact sequence
\[
1 \to \LPMCG((S^1 \times D^2)_2) \to \LMCG((S^1 \times D^2)_2) \overset{f}{\to} \mathbb{Z}/2\mathbb{Z} \to 1,
\]
such that $f(\sigma)$ is a generator for the $\mathbb{Z}/2\mathbb{Z}$. Hence by Lemma \ref{lemma:PMCGgens}, $\LPMCG((S^1 \times D^2)_2)$ is generated by $t_\beta^2, m^2 m\ell m^{-1}, \ell, \tau,$ and $\sigma$. Now by Lemma \ref{lem:mcgiso}, we have that $\SMCG((S^1 \times D^2)_4)$ is generated by lifts of the generators of $\LMCG((S^1 \times D^2)_2)$ together with $\rho$. In particular, $t_{\widetilde{\beta}}, \widetilde{m}, \widetilde{\ell}_+, \widetilde{\ell}_-, \widetilde{\tau},\widetilde{\sigma},$ and $\rho$ generate $\SMCG((S^1 \times D^2)_4)$.
\end{proof}

\begin{lemma} \label{lem:12typeC}
Let $a$ and $b$ be the trivial pair of $\rho$-invariant arcs in $((S^1 \times D^2)_4,\rho)$ as shown on the left in Figure \ref{fig:typeCoutcomes}. Let $w \in \SMCG((S^1 \times D^2)_4,\rho)$ so that we may consider new boundary parallel arcs $w(a \cup b)$. Then $w(a \cup b)$ is related to $a \cup b$ by a sequence of type C moves.
\end{lemma}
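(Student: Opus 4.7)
The plan is to reduce the lemma to a generator-by-generator check, using the explicit generating set for $\SMCG((S^1 \times D^2)_4)$ furnished by Proposition \ref{prop:smcggens}. Writing $w = g_1 g_2 \cdots g_k$ as a product of the generators $t_{\widetilde{\beta}}, \widetilde{m}, \widetilde{\ell}_+, \widetilde{\ell}_-, \widetilde{\tau},\widetilde{\sigma},\rho$, I would induct on the word length $k$. The inductive step uses the fact that being related by type C moves is a relation preserved under equivariant diffeomorphism: if $g_k(a\cup b)$ is obtained from $(a\cup b)$ by a sequence of type C moves (supported on certain equivariant surgery disks), then $g_{k-1}g_k(a\cup b)$ is obtained from $g_{k-1}(a\cup b)$ by the corresponding type C moves on the pushed-forward disks, and the inductive hypothesis handles $g_{k-1}(a\cup b)$. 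Thus the whole lemma reduces to verifying the base case for each generator.

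For the three involutive generators $\rho$, $\widetilde{\tau}$, and $\widetilde{\sigma}$, no type C move should be required. By construction each arc of $(a\cup b)$ is $\rho$-invariant, so $\rho(a\cup b) = a\cup b$; the hyperelliptic involution $\widetilde{\tau}$ fixes the four marked points and can be checked diagrammatically to send $(a\cup b)$ to an equivariantly isotopic configuration; and $\widetilde{\sigma}$ simply exchanges the two arcs (or their endpoints) up to equivariant isotopy. These cases are handled by inspection of Figure \ref{fig:typeCoutcomes} (left).

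For the twist generators $t_{\widetilde{\beta}}$ and $\widetilde{m}$, and the longitudinal generators $\widetilde{\ell}_+$ and $\widetilde{\ell}_-$, the plan is to exhibit an explicit realization by type C moves. The basic building blocks are given by Lemma \ref{lem:typeCoutcomes}: a single type C move applied to a trivial pair of arcs produces one of the two outcomes in the bottom row of Figure \ref{fig:typeCoutcomes}, each of which introduces a controlled number of half-twists between the two arcs at a specified location along the core. The meridional twist $t_{\widetilde{\beta}}$ should follow from chaining together the two outcomes so that the twisting contributions agree with a full meridional twist. For the longitudinal generators, one drags the marked points around the $S^1$ factor by performing type C moves centered at a moving location along the axis and interleaving equivariant isotopies; since $\widetilde{\ell}_+$ and $\widetilde{\ell}_-$ lift the same base curve in different ways, the two outcomes in Figure \ref{fig:typeCoutcomes} should produce these two lifts separately.

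The main obstacle I expect is the explicit realization of $\widetilde{\ell}_+$ and $\widetilde{\ell}_-$, because these generators move marked points all the way around the core and must be built out of local type C moves plus equivariant isotopy. The verification is essentially a direct diagrammatic calculation, and the key structural reason it must work is that Proposition \ref{prop:smcggens} was derived exactly from the liftability criteria, so the generators are already constrained to have an even amount of twisting in the relevant directions — precisely the parity that can be matched by the paired outcomes of Lemma \ref{lem:typeCoutcomes}.
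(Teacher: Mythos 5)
Your inductive skeleton---writing $w$ in the generators of Proposition \ref{prop:smcggens}, pushing forward the previously constructed type C moves by the new generator $g$, and reducing to a base-case check that $g(a\cup b)$ is related to $a\cup b$ by type C moves---is exactly the structure of the paper's proof. However, your generator-by-generator verification goes wrong in a way that matters. You treat $t_{\widetilde{\beta}}$ and $\widetilde{m}$ as cases requiring an explicit "realization by type C moves," but no such realization is needed or even meaningful here: these are Dehn twists about meridian disks that can be chosen disjoint from the boundary-parallel arcs, so $g(a\cup b)=a\cup b$ on the nose and zero type C moves are required. The point you are missing is that one never needs to realize the \emph{diffeomorphism} by type C moves, only to compare the \emph{image of the specific trivial tangle} with the trivial tangle; this is a much weaker and easier check.

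Conversely, you dismiss $\widetilde{\sigma}$ as requiring no move because it "simply exchanges the two arcs up to equivariant isotopy," but this is false: swapping the four marked points in pairs changes the isotopy class of the arcs rel the boundary points, and $\widetilde{\sigma}(a\cup b)$ is precisely the nontrivial tangle in the bottom center of Figure \ref{fig:typeCoutcomes} (with zero twists), which costs one type C move by Lemma \ref{lem:typeCoutcomes}. Finally, for $\widetilde{\ell}_{\pm}$, the cases you identify as the "main obstacle" and leave as an expected-to-work diagrammatic calculation, the resolution is again immediate from Lemma \ref{lem:typeCoutcomes}: dragging the endpoints of a boundary-parallel trivial pair of arcs once around the core produces exactly one of the two classified outcomes (with $0$ or $-1$ half-twists), so each of $\widetilde{\ell}_{+}$ and $\widetilde{\ell}_{-}$ costs a single type C move. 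As written, your proposal does not close the argument for the cases that actually carry the content of the lemma, and asserts the wrong answer for $\widetilde{\sigma}$.
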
 

\begin{proof}
We will write $w$ as a word in the generators $t_{\widetilde{\beta}}, \widetilde{m}, \widetilde{\ell}_+, \widetilde{\ell}_-, \widetilde{\tau},\widetilde{\sigma},$ and $\rho$ and proceed by induction on the length of the word $w$. The base case is trivial. 

For the inductive step, suppose we have a word $v$ such that $v(a \cup b)$ is a tangle obtained by a sequence $C_n \circ \dots \circ C_1$ of type C moves applied to $a \cup b$. By Proposition \ref{prop:smcggens}, we consider $w = g \circ v$ for $g \in \{t_{\widetilde{\beta}}, \widetilde{m}, \widetilde{\ell}_+, \widetilde{\ell}_-, \widetilde{\tau},\widetilde{\sigma},\rho\}$. Then $w(a \cup b)$ is obtained from $g(a \cup b)$ by the sequence of type C moves $g(C_n) \circ \dots \circ g(C_1)$. Noting that $g(C_i)$ is a type C move, since $g$ is a diffeomorphism, it remains to check that $g(a \cup b)$ is obtained from $a \cup b$ by type C moves. In the case where $g \in \{t_{\widetilde{\beta}}, \widetilde{m}, \widetilde{\tau},\rho\}$, observe that $g(a \cup b) = a \cup b$, so that $g(a \cup b)$ is obtained by zero type C moves. In the case where $g \in \{\ell_-,\sigma\}$, we have that $\sigma(a \cup b)$ is the tangle shown in the bottom center of Figure \ref{fig:typeCoutcomes} (with zero twists), and hence is obtained from $a \cup b$ by a single type C move. In the case where $g = \ell_+$, we have that $\sigma(a \cup b)$ is the tangle shown in the bottom center of Figure \ref{fig:typeCoutcomes} (with $n = -1$ half twists), and hence is also obtained from $a \cup b$ by a single type C move. Therefore $w(a \cup b)$ is obtained from $a \cup b$ by a sequence of type C moves.
\end{proof}

\begin{proof}[Proof of Theorem \ref{thm:1_2typeC}]
For the forward direction, we proceed by induction on the number of type C moves needed to unknot $K$. For the unknot, the result is clear.

For the inductive step, suppose that we have a strongly invertible knot $K'$ with a $(1,2)$ decomposition with the axis of symmetry as the core of one of the handlebodies $H$. Without loss of generality, we may assume that any type C move on $K'$ is supported in $H$. By an appropriate diffeomorphism we may further assume that the pair of arcs of $K'$ contained in $H$ are trivial in the sense of Definition \ref{def:trivialarcs}. Then by Lemma \ref{lem:typeCoutcomes}, the result of a type C move on $H$ is as shown in the bottom center or bottom right of Figure \ref{fig:typeCoutcomes}. Since these tangles are all boundary parallel, any knot $K''$ obtained by a type C move on $K'$ still has a decomposition into a pair of boundary parallel arcs in $H$, and a pair of boundary parallel arcs in the complement of $H$. In other words, $K''$ is also a strongly invertible $(1,2)$ knot in which the core of one handlebody is the axis of symmetry. 

The reverse direction is an immediate corollary of Lemma \ref{lem:12typeC}.
\end{proof}

\section{The total equivariant unknotting number}

%\begin{definition}
%An element $g$ of a module $M$ with an involution $\rho$ is called \textbf{symmetric} if $\rho(g) = \pm g$. A generating set consisting of symmetric elements is called a \textbf{symmetric generating set}.
%\end{definition}

%\begin{definition}
%Define the equivariant Alexander module (cite Miller Powell and maybe Di Prisa)
%\end{definition}

%\begin{theorem}
%Let $K$ be a strongly invertible knot. Let $n$ be the minimum size of a symmetric generating set for the symmetric Alexander module. Then $n \leq \widetilde{u}(K)$. 
%\end{theorem}

%\begin{example}
%Consider the strongly invertible knot $K = (3_1 \# r3_1) \# 4_1 \# 4_1$ where the symmetry exchanges the trefoil summands, but preserves the figure-eight summands. Explain why $\widetilde{u}(K) \geq 3$, but as far as we are aware the best known bound for the usual unknotting number is $u(K) \geq 2$. List some things we checked?
%\end{example}

%Before proving Theorem \ref{thm:nonadditive}, we restate the classification of 2-bridge knots with unknotting number 1. 
%\begin{theorem}[Theorem 1 in \cite{MR857949}] \label{thm:kanenobuunknotting}
%Let $K$ be a 2-bridge knot corresponding to the fraction $p/q$. Then $u(K) = 1$ if and only if there are integers $r$ and $s$ such that 
%\begin{enumerate}
%\item gcd$(r,s) = 1$,
%\item $2rs = \pm p \pm 1$, \mbox{ and }
%\item $\pm q^{\pm 1} \equiv 2s^2$ mod $p$.
%\end{enumerate}
%\end{theorem}
The main purpose of this section is to prove Theorem \ref{thm:nonadditive}, which we restate now.
\nonadditive*
\begin{proof}
We will show that for $K_1 = K_2 = T_3$, the $3$-twist knot, the equivariant connect sum $K_1 \#K_2$ shown on the left in Figure \ref{fig:twistknotsumandquotients} with $n = 3$ has equivariant unknotting number $\widetilde{u}(K_1\#K_2) \geq 3$. Since $T_3$ with the indicated strong inversion (as seen in Figure \ref{fig:twistknots}) can be unknotted with a single type C move, this will prove the theorem. 

To begin, note that $q_2(K_1 \# K_2)$, as shown on the right in Figure \ref{fig:twistknotsumandquotients}, is $T(2,7) \#T(2,7)$, which has signature $6 + 6 = 12$, so that the unknotting number of $q_2(K_1 \# K_2)$ is at least 6. Now note that a type A move on $K_1 \# K_2$ produces a crossing change on $q_2(K_1 \# K_2)$, and a type B move on $K_1 \# K_2$ produces a 4-move on $q_2(K_1 \# K_2)$ (see Theorem \ref{thm:typeBbounds}). Hence if $K_1 \# K_2$ can be unknotted with two equivariant crossing changes in the form of a type A move or two type B moves, then $q_2(K_1 \# K_2)$ can be unknotted with at most 4 crossing changes. Since $\sigma(q_2(K_1 \# K_2)) = 12$, the unknotting number of $q_2(K_1 \# K_2)$ is at least 6, and hence $\widetilde{u}_A(K_1\#K_2) \geq 6$ by Theorem \ref{thm:typeAbounds} and $\widetilde{u}_B(K_1\#K_2) \geq 2$ by Theorem \ref{thm:typeBbounds}. We conclude that $K_1 \# K_2$ cannot be unknotted with a type A or two type B moves. Hence an unknotting sequence in fewer than three equivariant crossing changes can only consist of two type C moves, or a type B and a type C move.

We will now show that any knot $J$ obtained from $K_1 \# K_2$ by a type C move cannot be unknotted with a single type B or C move, from which we will conclude that $\widetilde{u}(K_1 \# K_2) \geq 3$. There are two infinite families of knots which can be obtained from $K_1 \# K_2$ by a type C move: the knots $J_m^+$ as shown in Figure \ref{fig:twistknottypeC}, and the knots $J_m^-$ obtained from a type C move along the unbounded half-axis. Note that the quotients $q_2(J_m^-)$ are all isotopic to the quotient $q_2(K_1\#K_2) = T(2,7) \# T(2,7)$. As above, this implies that $J_m^-$ cannot be unknotted with a single type B move. Furthermore, since $T(2,7)\#T(2,7)$ is not the quotient of a twist knot, we have that $J_m^-$ cannot be unknotted with a single type C move. 

\begin{figure} 
\begin{overpic}[width=300pt, grid=false]{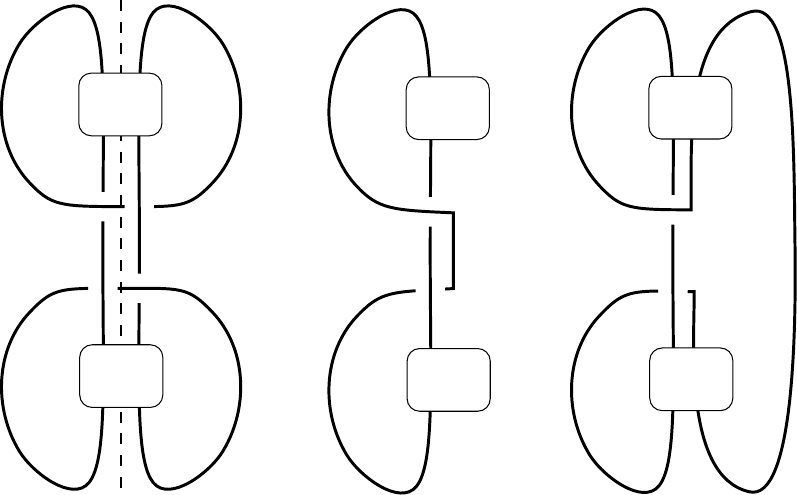}
\put (14, 14) {$n$}
\put (14, 48) {$n$}

\put (55, 13.5) {$n$}
\put (55, 47.5) {$n$}

\put (85.5, 13.5) {$n$}
\put (85.5, 47.7) {$n$}
\end{overpic}
\caption{When $n = 3$, the knot $K_1\#K_2$ (left), and the two quotient knots $q_1(K_1\#K_2)$ (center) and $q_2(K_1\#K_2)$ (right).}
\label{fig:twistknotsumandquotients}
\end{figure}

\begin{figure} 
\begin{overpic}[width=300pt, grid=false]{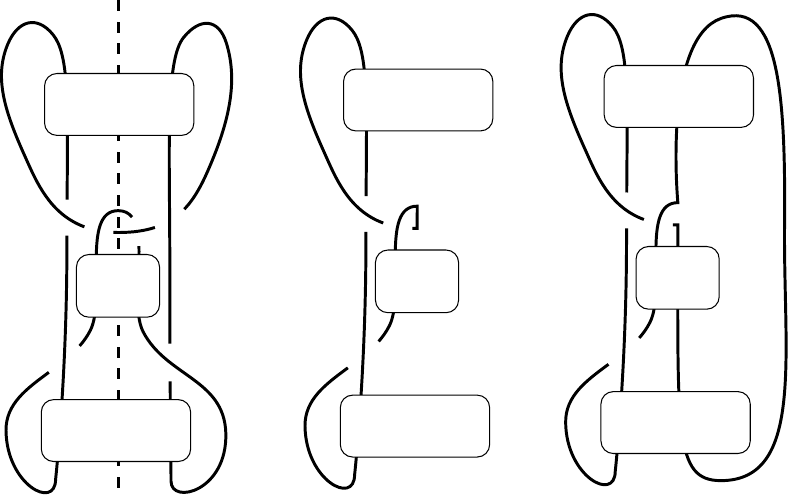}
    \put (14, 7) {$n$}
    \put (13.5, 25.5) {$m$}
    \put (14, 48.5) {$n$}

    \put (51.5, 7.5) {$n$}
    \put (51.5, 26) {$m$}
    \put (52, 49) {$n$}

    \put (85, 8) {$n$}
    \put (84.5, 26.3) {$m$}
    \put (85, 49.3) {$n$}
  \end{overpic}
\caption{The knot $J_m^+$ obtained from $K_1\#K_2$ by performing a type C move with $m$ twists as indicated (left), and the two quotients $q_1(J_m^+)$ (center) and $q_2(J_m^+)$ (right).}
\label{fig:twistknottypeC}
\end{figure}

It remains to check that $J_m^+$ cannot be unknotted with a single type B, or C move. We first observe that $q_2(J_m^+)$, shown on the right in Figure \ref{fig:twistknottypeC} is the 2-bridge knot corresponding to the continued fraction 
\[
[6,-1,2m+1,-1,6] = \dfrac{7(14m+19)}{2(7m+10)}.
\]
Since this fraction is not equivalent to $\dfrac{2k+1}{1}$ for any $k$, we conclude that $q_2(J_m^+)$ is never a torus knot, and hence that $J_m^+$ is never a twist knot. Since only twist knots can be unknotted with a single type C move, $\widetilde{u}_C(J_m^+) > 1$. We next compute the signature $\sigma(q_2(J_m^+))$. The Goeritz matrix is 
\[
G = \begin{bmatrix}
7 & -1 & 0 \\
-1 & 2m + 3 & -1 \\
0 & -1 & 7
\end{bmatrix},
\]
with correction term $2m + 3$, so that $\sigma(q_2(J_m^+)) = \sigma(G) - (2m + 3)$. Since $G$ is a $3\times 3$ matrix, we have that $-2m - 6 \leq \sigma(q_2(J_m^+)) \leq -2m$. Whenever $|\sigma(q_2(J_m^+))| \geq 6$, we have that $u(q_2(J_m^+)) > 2$ so that $q_2(J_m^+)$ cannot be unknotted with a single crossing change, or with a single 4-move and hence $J_m^+$ cannot be unknotted with a single type A or type B move. On the other hand, when $|\sigma(q_2(J_m^+))| < 6$ we have that $-6 < m < 3$. For these eight knots we have the fractions
\[
\dfrac{-357}{-50}, \dots, \dfrac{329}{48}.
\]
Applying Theorem \ref{thm:2bridge4moves} to these knots shows that none of them can be unknotted with a single 4-move. We conclude that for all $m$, $J_m^+$ cannot be unknotted with a single type B, or C move. Hence $\widetilde{u}(K_1 \# K_2) \geq 3$.
\end{proof}

\bibliography{bibliography.bib}
\bibliographystyle{alpha}
\end{document}